\newtheorem{theorem}{Theorem}[section]
\newtheorem{proposition}[theorem]{Proposition}
\newtheorem{definition}[theorem]{Definition}
\numberwithin{equation}{section}
\newtheorem{lemma}[theorem]{Lemma}
\newtheorem{remark}[theorem]{Remark}
\DeclareMathOperator{\n}{\Vert}
\DeclareMathOperator{\br}{\vert}
\newcommand{\Dp}{{\sf D}_\rho}
\newcommand{\Ds}{{\sf D}_\rho^2}
\newcommand{\Dr}{\widetilde{{\sf D}}_\rho} 
\newcommand{\rd}{{\sf R}_{\rm d}}
\newcommand{\ru}{{\sf R}_{\rm u}}
\newcommand{\Norm}[1]{\boldsymbol\lVert #1 \boldsymbol\rVert}
\DeclareMathOperator{\R}{\mathbb{R}}
\DeclareMathOperator{\N}{\mathbb{N}}
\newcommand{\bott}{{\rm bott}}
\newcommand{\surf}{{\rm surf}}
\title{Approximating a continuously stratified hydrostatic system by the multi-layer shallow water system}
\author{ Mahieddine Adim 
\thanks{Univ Rennes, CNRS, IRMAR - UMR 6625
F-35000 Rennes, France. \href{mailto:mahieddine.adim@univ-rennes1.fr }{mahieddine.adim@univ-rennes1.fr}}}
\date{\today}
\begin{document}
\maketitle
\thispagestyle{empty}

\begin{abstract}
In this article we consider the multi-layer shallow water system for the propagation of gravity waves in density-stratified flows, with additional terms introduced by the oceanographers Gent and McWilliams \cite{GentMcWilliams} in order to take into account large-scale isopycnal diffusivity induced by small-scale unresolved eddies. We establish a bridge between the multi-layer shallow water system and the corresponding system for continuously stratified flows, that is the incompressible Euler equations with eddy-induced diffusivity under the hydrostatic approximation.

Specifically we prove that, under an assumption of stable stratification, sufficiently regular solutions to the incompressible Euler equations can be approximated by solutions to multi-layer shallow water systems as the number of layers, $N$, increases. Moreover, we provide a convergence rate of order $1/N^2$.

A key ingredient in the proof is a stability estimate for the multi-layer system which relies on suitable energy estimates mimicking the ones recently established by Bianchini and Duchêne \cite{DB} on the continuously stratified system. This requires to compile a dictionary that translates continuous operations (differentiation, integration, {\em etc.}) into corresponding discrete operations.
\end{abstract} 

\section{Introduction}

\paragraph{Equations at stake}
This work is concerned with the multi-layer shallow water system
\begin{equation}\label{mltc-intro}
\left\{\begin{array}{l}
\partial_tH_i+\partial_x((\underline{H}_i+H_i)(\underline{U}_i+U_i))=\kappa\partial_x^2H_i\\ 
\partial_tU_i+\left(\underline{U}_i+U_i-\kappa\frac{\partial_xH_i}{\underline{H}_i+H_i}\right)\partial_xU_i+g\sum\limits_{j=1}^N\frac{1}{N}\frac{\min(\boldsymbol{\rho}_i,\boldsymbol{\rho}_j)}{\boldsymbol{\rho}_i}\partial_xH_j=0,
\end{array}     \quad \forall i\in\{1,\cdots,N\}
\right.  \end{equation}
as an approximation to the continuously stratified system
\begin{equation}\label{contsystem-intro}
\left\{\begin{array}{l}
\partial_th+\partial_x((\underline{h}+h)(\underline{u}+u))=\kappa\partial_x^2h,\\ 
\partial_tu+\left(\underline{u}+u-\kappa\frac{\partial_xh}{\underline{h}+h}\right)\partial_xu+\frac{g}{\varrho}\partial_x\psi=0,
\end{array}     
\right.  \end{equation}
where 
$	\partial_x\psi(t,x,\varrho)=\rho_{\surf}\int\limits_{\rho_{\surf}}^{\rho_{\bott}}\partial_xh(t,x,\varrho')d\varrho'+\int\limits_{\rho_{\surf}}^{\varrho}\int\limits_{\varrho'}^{\rho_{\bott}}\partial_xh(t,x,\varrho'')d\varrho''d\varrho'$.

When $\kappa=0$, system \eqref{contsystem-intro} is a reformulation of the incompressible Euler equation with hydrostatic approximation using isopycnal coordinates. Such reformulation is possible when the fluid is stratified, by which we mean that sheets of equal densities realize a foliation of the fluid domain; see {\em e.g.} \cite{HLB,DB}. When the stratification is stable, that is the density is increasing with depth for all horizontal space locations $x\in\R$, the reformulation with isopycnal coordinates makes use of the inverse of the density function $z\mapsto \rho(\cdot,z)$, which we denote $\varrho\mapsto \eta(\cdot,\varrho)$ for $\varrho\in (\rho_{\surf},\rho_{\bott}) $ where $\rho_{\surf}$ is the (constant) density at the free surface and $\rho_{\bott}$ is the (constant) density at the rigid flat bottom. Since the graph of $\eta(\cdot,\varrho)$ represents in Eulerian coordinates the isopycnal sheet of density $\varrho$, the function  defined as $\underline{h}+h:=-\partial_\varrho \eta$ is the infinitesimal depth of this isopycnal sheet. Then, $\underline{u}+u$ is the horizontal velocity component of fluid particles at the isopycnal sheet. We decompose the depth and horizontal velocities as $\underline{h}+h$ and $\underline{u}+u$ where $(\underline{h},\underline{u})$ represent the background shear flow and are given functions depending only on the density variable $\varrho\in (\rho_{\surf},\rho_{\bott}) $, and $(h,u)$ are the unknowns representing the deviations from the equilibrium and depending on time $t$, horizontal space $x\in\R$, and density variable  $\varrho\in (\rho_{\surf},\rho_{\bott}) $. Finally, $g$ is the constant gravity acceleration and $\psi$ is the so-called Montgomery potential. It is responsible for the interaction between isopycnal sheets.

Still with $\kappa=0$, the system \eqref{mltc-intro} corresponds to a situation of $N$ layers of immiscible fluids with constant densities $\boldsymbol{\rho}_i$, $ i\in\{1,\cdots,N\}$. Applying the hydrostatic approximation and columnar assumption, we arrive (see {\em e.g.} \cite{LeeSu77,Baines88}) at the multi-layer shallow water system \eqref{mltc-intro} where $\underline{H}_i+H_i$ represents the depth of the $i^{\rm th}$ layer ($\underline{H}_i$ being the depth at rest and $H_i$ the deviation) and $\underline{U}_i+U_i$ being the layer-averaged horizontal velocity ($\underline{U}_i$ being the background velocity and $U_i$ the deviation).

In \eqref{mltc-intro} (respectively \eqref{contsystem-intro}), the terms proportional to $\kappa$ have been introduced by Gent and McWilliams \cite{GentMcWilliams} so as to represent the large-scale contribution of unresolved eddies. They appear as additional effective velocities $U_i^\star:=-\kappa\frac{\partial_xH_i}{\underline{H}_i+H_i}$ (respectively $u^\star:=-\kappa\frac{\partial_xh}{\underline{h}+h}$) and act as diffusivity contributions in the mass conservation equations. Interestingly, similar terms have been introduced in the work of  
Duran, Vila, and Baraille \cite{ARN} so as to control the discrete energy of a semi-implicit numerical scheme for the multilayer system \eqref{mltc-intro}.
From the mathematical viewpoint, as discussed below, the regularizing effect of diffusivity contributions is essential to our analysis as it provides appropriate stability estimates, and we shall always assume $\kappa>0$.
\medskip

\paragraph{Stability aspects}
In the situation $\kappa=0$, system \eqref{mltc-intro} is a system of conservation equations and the well-posedness theory of the initial-value problem (and more generally stability properties) relies on hyperbolicity conditions; see \cite{BG}. Yet as soon as $N\geq 2$, explicit formula for the hyperbolic domain of the system become out of reach \cite{Ovsjannikov79,BarrosChoi08,VirissimoMilewski20}.
In \cite{duchene:hal-00922045,Monjarreta}, the authors provide sufficient conditions for strong hyperbolicity when the fluid is stably stratified (that is $\boldsymbol{\rho}_1<\boldsymbol{\rho}_2<\cdots<\boldsymbol{\rho}_N$) but these conditions are obtained using perturbative arguments with respect to the situation without shear velocities and degenerate as $N\to\infty$ to
\[ \big\{ (\underline{H}_i+H_i,\underline{U}_i+U_i)\in \R^{2N}\ : \ \underline{H}_i+H_i>0,\ \underline{U}_1+U_1=\underline{U}_2+U_2=\cdots =\underline{U}_N+U_N \big\},\]
preventing any study at the nonlinear level or including shear velocities. We let the reader refer to the interesting discussion in \cite[\S5]{Ripa91} concerning stability results for the multi-layer systems when increasing the number of layers, in relation with stability results for the continuously stratified system. It is recalled therein the celebrated stability criterion of Miles \cite{Miles61} and Howard \cite{Howard61}, preventing normal mode instability of continuously stratified shear flows if the local Richardson number is everywhere greater than $1/4$. Its is also clarified that this notion of stability (as well as other results, obtained by Holm and Long \cite{HLB} and Arbanel et al. \cite{AbarbanelHolmMarsdenEtAl86}) is too weak to imply the control of deviations from equilibria in particular at the nonlinear level.

Consistently, the well-posedness of the initial-value problem in finite-regularity spaces for the continuously stratified system \eqref{contsystem-intro} in the absence of diffusivity ($\kappa=0$) is an open problem. We let the reader refer to the work of Kukavica et al. \cite{Kukavica} for the existence and uniqueness of a solution in spaces of analytical functions (in the rigid-lid setting), and to Cao, Li \& Titi \cite{CaoTiti} among many other works (see \cite{LiTitiYuan22} for a recent account) for the situation with horizontal viscosity and diffusivity contributions. While the aforementioned works deal with the hydrostatic Euler equations written with Eulerian coordinates and do not rely on the stable stratification assumption, the work of Bianchini and Duchêne \cite{DB} is closer to our setting as it specifically deals with the system in isopycnal coordinates with the additional terms of Gent and McWillams, that is \eqref{contsystem-intro}. They show that sufficiently regular initial data satisfying the non-cavitation assumption $(\underline{h}+h)\vert_{t=0}\geq h_*>0$ (which in fact represents a stable stratification assumption) gives rise to a unique solution on a time interval $[0,T]$ with $T^{-1}\lesssim 1+\kappa^{-1}(|\underline{u}'|_{L^2((\rho_{\surf},\rho_{\bott}))}^2+M_0^2) $ where $M_0$ is the size of the initial deviations from the equilibrium.
\medskip

\paragraph{Main results} Our first main result is the analogous conclusion for the multi-layer system, namely that under natural hypotheses and in particular the stable stratification assumption (in fact we assume for simplicity that densities are equidistributed, {\em i.e.} $\boldsymbol{\rho}_{i+1}-\boldsymbol{\rho}_i = \frac{\rho_{\rm bott}-\rho_{\rm surf}}{N}$) and the non-cavitation assumption $(\underline{H}_i+H_i)\vert_{t=0}\geq h_*>0$, the solutions to \eqref{mltc-intro} are unique and exist on a time interval analogous to the one of the continuously stratified system, which in particular is {\em uniform with respect to $N$}. 

Our second main result states that for any sufficiently regular solution to the continuously stratified system \eqref{contsystem-intro} satisfying the non-cavitation assumption and appropriate bounds, the solutions to the multi-layer systems  \eqref{mltc-intro} with suitably chosen densities $\boldsymbol{\rho}_{i}$, reference depths $\underline{H}_i$, background velocities $\underline{U}_{i}$ and initial deviation $(H_i,U_i)\vert_{t=0}$ are at a distance $\mathcal{O}(1/N^2)$ to the continuously stratified solution.
\medskip

In order to achieve these goals, we rely mostly on two ingredients:
\begin{enumerate}
\item a {\em consistency} result, stating that sufficiently regular solutions to the continuously stratified system may be projected into $N$-dimensional valued functions satisfying the multi-layer shallow water systems  up to a remainder term of size $\mathcal{O}(1/N^2)$;
\item a suitable {\em stability} estimate on the linearized multi-layer shallow water systems, which is uniform with respect to $N$.
\end{enumerate}
The consistency result amounts to controlling the difference between the $N$-dimensional projection of the contribution of the Montgomery potential, $\frac1\varrho \partial_x\psi$ in \eqref{contsystem-intro}, and the corresponding contribution in \eqref{mltc-intro} when $(H_i)_{i\in\{1,\cdots,N\}}$ is the  $N$-dimensional projection of $h$. The $\mathcal{O}(1/N^2)$ estimate is obtained by exploiting Taylor expansions at suitable values of densities $\boldsymbol{\rho}_i$ in the spirit of the midpoint rule in numerical quadratures.

The stability estimate on \eqref{mltc-intro} relies on the energy method, for a carefully constructed energy functional. In order to obtain suitable bounds on the correct time scale, we make use of a partial symmetric structure of the hyperbolic system (when $\kappa=0$) and rely on the regularization effect of diffusivity only when necessary. This partial symmetric structure and the construction of the associated energy functional relies on a decomposition of the linear operator
\[ {\sf \Gamma}_N:(H_i)_{i\in\{1,\cdots,N\}} \mapsto \big(\sum\limits_{j=1}^N\frac{1}{N}\frac{\min(\boldsymbol{\rho}_i,\boldsymbol{\rho}_j)}{\boldsymbol{\rho}_i}H_j\big)_{i\in\{1,\cdots,N\}}\]
which echoes directly an analogous decomposition of continuous operator
\[ \mathcal M: h \mapsto \frac1{\varrho}\left(\rho_{\surf}\int\limits_{\rho_{\surf}}^{\rho_{\bott}}h(\varrho')d\varrho'+\int\limits_{\rho_{\surf}}^{\varrho}\int\limits_{\varrho'}^{\rho_{\bott}}h(\varrho'')d\varrho''d\varrho'\right)\]
and which is a key ingredient in the study of Bianchini and Duchêne. More generally, our energy estimates mirrors the ones in \cite{DB} after introducing a dictionnary between continuous operators and analogous discrete operators, with several adaptations to fit our framework.
\bigskip

\paragraph{Related works} To the best of our knowledge, this is the first time that the convergence between solutions to multi-layer systems and continuously stratified equations is rigorously proved (while formal connections are discussed for instance in the pioneering works of Benton~\cite{Benton53}, Su~\cite{Su76}, Killworth \cite{Killworth92}), with the notable exception of the work of Chen and Walsh~\cite{CW}. In the latter work, the authors consider the framework of periodic traveling waves without the hydrostatic approximation (and without diffusivity contributions). They prove that for sufficiently small periodic traveling wave solutions to the continuously stratified equations satisfying natural assumptions (in particular that the stratification is stable, sufficient regularity of the streamline density, and the absence of stagnation points), there exist corresponding traveling wave solutions associated with any piecewise smooth streamline density function (hence in particular solutions to multi-layer systems which correspond to piecewise constant streamline density functions, the velocity field being irrotational within each layer) in an $L^\infty$ neighborhood, and that the mapping from streamline density functions to the corresponding traveling wave is Lipschitz continuous in suitable functional spaces, the distance between two streamline densities being measured through the $L^\infty$ norm. 

This result, which is quite strong and versatile, does not directly compare with ours. Most importantly, the framework of traveling waves is of course very different from our framework allowing non-trivial dynamics. Roughly speaking, the traveling wave problem can be recast into a problem of elliptic nature, while our systems of equations are hyperbolic/parabolic. Consistently the tools used in our work, with the exception of a semi-Lagrangian change of coordinates to fix the fluid domain (called the Dubreil-Jacotin transformation in the context of traveling waves, which is equivalent to the isopycnal change of coordinates) are very different from the tools used in \cite{CW}. Moreover, the hydrostatic approximation also modifies the nature of the problem, as it discards dispersive effects. As such, nonlinear traveling waves are inexistent within the hydrostatic approximation framework, and could be replaced ---if one desires to study simplified dynamics--- with simple waves \cite{ChumakovaEtAl09,OstrovskyHelfrich11}. It would be very natural and interesting to extend our results to the non-hydrostatic framework. Let us notice however that multi-layer equations without the hydrostatic approximation suffer from Kelvin--Helmholtz instabilities, and hence require a regularizing mechanism (possibly offered by the Gent and McWilliams contributions) to allow for solutions in larger functional spaces than analytical functions \cite{SulemSulem85,IguchiTanakaTani97,KamotskiLebeau05,Lannes13}.

Finally, as discussed in \cite{CW}, the ``reverse'' limit consisting in approaching a bi-layer (or multi-layer) situation with continuously stratified problems has been studied (in the framework of traveling waves) in particular by James~\cite{James01}, after Turner and Amick~\cite{Turner81,AmickTurner86}. This limit is only apparently related to the limit considered here. Let us mention in particular that negative lower and upper bounds on the vertical derivative of the density function (through the non-cavitation assumption) are crucially used in our analysis as well as in \cite{DB}. We leave the study of this interesting problem to a later work.

\paragraph{Outline of the paper}
In section \ref{section2} we introduce all the notations and conventions used throughout this work. In section \ref{section3} we provide preliminary results, including detailed product and commutators estimates that will be used in the proofs of our main results.
Section \ref{section4} is dedicated to the proof of our first main result, namely the local well-posedness of our multi-layer systems on a time interval independent of  the number of layers, $N$. We first provide a preliminary well-posedness result on a short time interval (depending on $N$) in Proposition \ref{WPML}. The next step is to extract the quasilinear structure of the equations (Section \ref{section4.1}) and to provide energy estimates associated with the extracted linear equations (Section \ref{section4.2}). Finally, in Section \ref{section4.3}, we prove of our large-time well-posedness result, Theorem \ref{timeindepof}.
In section \ref{section5} we prove our second main result, Theorem \ref{mainth}, stating that solutions to the multi-layer system \eqref{mltc-intro} converge towards sufficiently regular solutions to the continuously stratified hydrostatic system \eqref{contsystem-intro} satisfying the non-cavitation assumption, with a $\mathcal{O}(1/N^2)$ convergence rate. As already mentioned, this results relies in part on a consistency result, obtained in Section \ref{section5.1}, and on the other part on stability estimates which we collect in Section \ref{section5.2}.

\section{Notations and conventions}\label{section2}

{\em In what follows, we use as convention that $g=1$ and $\rho_{\bott}-\rho_{\surf}=1$. This choice can be enforced without loss of generality through a suitable rescaling of the variables and unknowns. We also set $(\boldsymbol{\rho}_i)_{i\in\{1,\cdots,N\}}$ as
\[ \forall i\in\{1,\cdots,N\},\qquad \boldsymbol{\rho}_i=\rho_{\surf}+(i-\tfrac12) \frac{\rho_{\bott}-\rho_{\surf}}{N}.\]
Again this choice does not convey a restriction on admissible density profiles but only that we decide to discretize a continuous streamline density with a piecewise constant function with equidistributed values; see Figure \ref{dessin}. Notice however that the upper and lower bound assumption we later on impose on the depth variables do express negative lower and upper bounds on the vertical derivative of the density profiles we can consider.
}
\medskip

\begin{figure}[htb]
\begin{center}
\includegraphics[width=.7\textwidth]{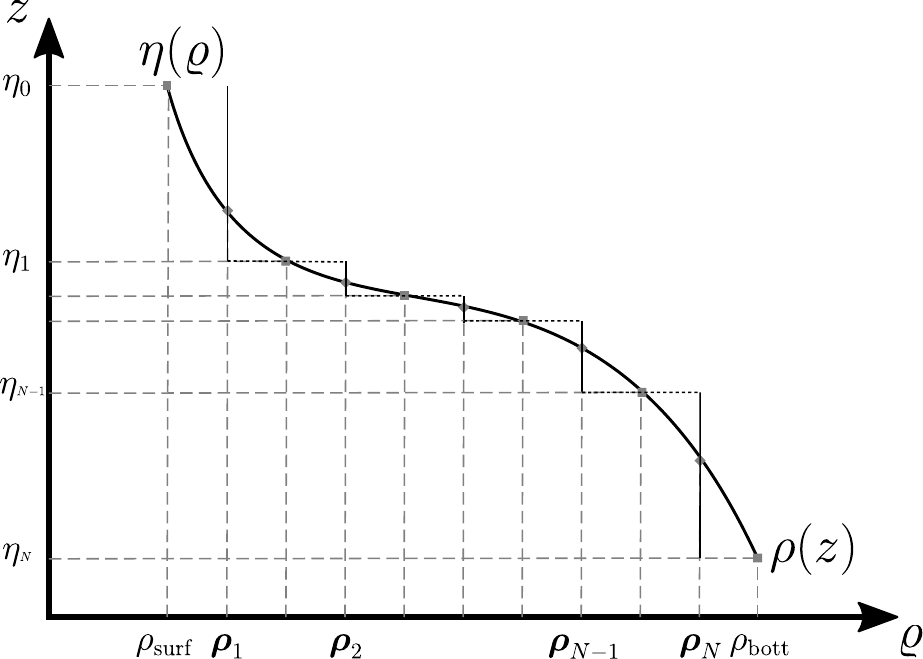}
\end{center}
\caption{A density profile and its piecewise constant approximation}
\label{dessin}
\end{figure}

We shall use matrix and vector formulations when dealing with the multi-layer system \eqref{mltc-intro}. We then use capital letters without indices for $N$-dimensional vectors, sans serif fonts for $N$-by-$N$ matrices, and indices to denote each components. 

For $F=(F_1,\cdots,F_N)^t$, $G=(G_1,\cdots,G_N)^t$ two vectors, we define the product $FG:=(F_1G_1,\cdots,F_NG_N)^t$,
and for a function $\varphi:\R\to\R$, $\varphi(F):=(\varphi(F_1),\cdots,\varphi(F_N))^t$. 

With these conventions, \eqref{mltc-intro} reads
\begin{equation}\label{mlct-notation}
	\left\{\begin{array}{l}
		\partial_tH+(\underline{U}+U)\partial_x H+(\underline{H}+H)\partial_xU=
		\kappa\partial_x^2 H,\\
		
		\partial_t U+\left(\underline{U}+U-\kappa\frac{\partial_xH}{\underline{H}+H}\right)\partial_xU+{\sf \Gamma}\partial_xH=0,
	\end{array}\right.
\end{equation}
with  $\underline{H}=(\underline{H}_1,\cdots,\underline{H}_N)^t$, $\underline{U}=(\underline{U}_1,\cdots,\underline{U}_N)^t$,
$H=(H_1,H_2,\cdots,H_N)^t$, $U=(U_1,U_2,\cdots,U_N)^t$, and ${\sf \Gamma}_{i,j}:=\frac1N\frac{\min(\boldsymbol{\rho}_i,\boldsymbol{\rho}_j)}{\boldsymbol{\rho}_i}$, for all $1\leq i,j\leq N$.

We now introduce matrix operations which will be used in our proofs.

\begin{itemize}
	\item Denote ${\sf Id}\in M_N(\R)$ the identity matrix, ${\sf P}:={\sf Diag}(1,0,\cdots,0)\in M_N(\R)$ the projection onto the first component, and ${\sf C }={\sf Id}-{\sf P}$. 
	
	\item We define the  operator ${\sf T}:=\sqrt{N}{\sf P}\in M_N(\R)$ which can be interpreted as the discrete analogue of the trace operator on the surface (acting on the density variable in the continuously stratified case).
	\item Let ${\sf S}\in M_N(\R)$ be the discrete integration operator defined as 
	\[{\sf S}:=\frac1N\begin{pmatrix} 
1 & 1 &\cdots& 1\\
0 & \ddots & \ddots & \vdots\\
\vdots&\ddots&\ddots&1\\
0&\cdots&0&1
\end{pmatrix}\]
We also define ${\sf S}_0\in M_{N,N-1}(\R)$ to be the matrix ${\sf S}$ without the last column.

\item Let $\Dp\in M_{N-1,N}(\R)$ be the discrete differentiation operator defined as
\[ \Dp:=N\begin{pmatrix}
1 & -1 & 0 &\cdots &0\\
0&   1 &-1&\ddots&\vdots\\\
\vdots&\ddots&\ddots&\ddots&0\\
0&\cdots & 0& 1& -1&\\
\end{pmatrix}.\] 
With an abuse of notation we will write  $\Ds=\Dr \Dp\in M_{N-2,N}(\R)$,  where $\Dr$ is the matrix $\Dp$ without the last line and last column.   Moreover we will use the convention $\Dp^0={\sf Id}$.

\item  Let ${\sf M}\in M_{N-1,N}(\R)$ be defined as
\[{\sf M}:=\frac{1}{2}\begin{pmatrix}
1 & 1 & 0 &\cdots &0\\
0&   1 &1&\ddots&\vdots\\\
\vdots&\ddots&\ddots&\ddots&0\\
0&\cdots & 0& 1& 1&\\
\end{pmatrix}.\]
  With an abuse of notation we will write  ${\sf M}^2=\widetilde{{\sf M}} {\sf M}\in M_{N-2,N}(\R)$,  where $\widetilde{{\sf M}}$ is the matrix ${\sf M}$ without the last line and last column, hence it is in the matrix space $ M_{N-2,N-1}(\R)$.   Moreover we will use the convention ${\sf M}^0={\sf Id}$.


\item Finally, we denote $\ru,\rd\in M_{N-1,N}(\R)$ the upwards and downwards reduction operators defined for $F=(F_1,\cdots,F_N)^t\in \R^N$, by 
\[\ru F=(F_2,\cdots,F_N)^t \quad \text{ and } \quad \rd F=(F_1,\cdots,F_{N-1})^t.\]
\end{itemize}

We shall map functions defined on $[\rho_{\surf},\rho_{\bott}]$ (associated with the continuously stratified framework) to $N$-dimensional vectors (associated with the multi-layer framework) with the following operator:
\[
P_N:	\begin{array}{ccc}
	\mathcal{C}([\rho_{\surf},\rho_{\bott}])& \rightarrow & \mathbb{R}^N\\
	f&\mapsto &\left(f(\boldsymbol{\rho}_i\right))_{1\leq i\leq N}
\end{array}
\]
\medskip

Let us now describe some functional spaces we use in this work.
\begin{itemize}
\item In order to describe tensored functional spaces for functions with variables in the strip $\Omega:=\R\times(\rho_{\surf},\rho_{\bott})$, we use the equivalent notations $H^s(\mathbb{R})=H^s_x$ (the usual $L^2$-based Sobolev space on $\mathbb{R}$) and $W^{k,\infty}(\R)=W^{k,\infty}_x$ (the $L^\infty$-based Sobolev space on $\mathbb{R}$), and similarly $L^2((\rho_{\surf},\rho_{\bott}))=L^2_\varrho$ and $W^{k,\infty}((\rho_{\surf},\rho_{\bott}))=W^{k,\infty}_\varrho$. We denote for instance 
\begin{equation*}
L^2_\varrho L^\infty_x=L^2(\rho_{\surf},\rho_{\bott};L^\infty(\R))=\{f : \text{ess sup}_{x\in\R}\br f(x,\cdot)\br\in L^2((\rho_{\surf},\rho_{\bott}))\}.
\end{equation*}
Notice $L^2_\varrho L^2_x=L^2_xL^2_\varrho=L^2(\Omega)$ and $L^\infty_\varrho L^\infty_x=L^\infty_xL^\infty_\varrho =L^\infty(\Omega)$. 

\item Let $s\in\R$, $k\in\N$, where $k\leq s$, we define the functional space 
\begin{equation*}
X^{\infty,s,k}(\Omega)=X^{\infty,s,k}_{\varrho,x}=\left\{f: \Omega \longrightarrow \R;\hspace{0.2cm}\forall j\in\N,\hspace{0.2cm} j\leq k,\hspace{0.2cm}  \partial_\varrho^jf\in L^\infty_\varrho(H^{s-j}(\R))\right\}, 
\end{equation*}
endowed with the topology of the norm
$$\n f\n_{\infty,s,k}^2=\sum\limits_{j=0}^k\n \partial_\varrho^jf\n_{L^\infty_\varrho(H^{s-j}_x)}^2.$$

\item For any $q\geq1$ we define endow $\R^N$ with the following normalized $l^q$ norms:
\[\br F\br_{l^q}:=\frac{\left(\sum\limits_{i=1}^N\br F_i\br^q\right)^\frac{1}{q}}{{N}^\frac{1}{q}} \quad \text{  and }\quad \br F\br_{l^\infty}:=\underset{i\in\{1,\cdots,N\}}{\sup}\br F_i\br\]
Moreover for $p,q\geq 1$, we introduce the following functional spaces
\begin{equation*}
L^p_x(l^q)=\{F=(F_1,\cdots,F_N)\in (L^p(\R^d))^N;\n\br F\br_{l^q}\n_{L^p_x}<\infty\}
\end{equation*}
\begin{equation*}
l^q(L^p_x)=\{F=(F_1,\cdots,F_N)\in (L^p(\R^d))^N;\br\left(\n F_i\n_{L^p_x}\right)_{1\leq i\leq N}\br_{l^q}<\infty\}
\end{equation*}
endowed with the following norms 
$\n F\n_{L^p_x(l^q)}:=\n\br F\br_{l^q}\n_{L^p_x}$ and $\n F\n_{l^q(L^p_x)}:=\br\left(\n F_i\n_{L^p_x}\right)_{1\leq i\leq N}\br_{l^q}$.
Notice that $\n F\n_{L^2_x(l^2)}=\n F\n_{l^2(L^2_x)}$.
\item Let $\big<\cdot,\cdot\big>_{L^2_xl^2}$ be the scalar product of the Hilbert space $l^2(L_x^2(\R))$ such that for $F,G\in l^2(L_x^2(\R))$ we have 
\begin{align*}
\big<F,G\big>_{L^2_xl^2}:=\int\limits_{\R}<F(x),G(x)>_{l^2}dx=\frac1N\sum_{i=1}^N \int\limits_{\R}F_i(x)G_i(x)dx.
\end{align*}

\item For $F\in\mathbb{R}^N$ and for $k\in\{0,1,2\}$ we define the following norm $\br F\br_{w^{k,\infty}}:=\sum\limits_{l=0}^k\br \Dp^{l} F\br_{l^\infty}$.

\item Let  $s\in\R$, $k\in\left\{0,1,2\right\}$, where $k\leq s$, we define the functional space 
\begin{equation*}
H^{s,k}(\R)=\left\{F: \R \longrightarrow \R^N;\hspace{0.2cm}\forall j\in\N,\hspace{0.2cm} j\leq k,\hspace{0.2cm} \Dp^j F\in H^{s-j}(\R)^N\right\}, 
\end{equation*}

endowed with the topology of the norm 
\begin{equation*}
\n F\n_{H^{s,k}}^2=\sum\limits_{j=0}^k\n \Dp^j F\n_{l^2(H^{s-j}_x)}^2.
\end{equation*}

\item We use standard notations for functions depending on time. For instance, for $k\in\N$ and $X$ a Banach space as above, $C^k([0,T];X)$ is the space of functions with values in $X$ which are continuously differentiable up to order $k$, and $L^p(0,T;X)$ the \textit{p}-integrable $X$ valued functions. All these spaces are endowed
with their natural norms.
\end{itemize}

We conclude with a few additional notations.
\begin{itemize}

\item For any $a,\,b\in\R$, we use the notation $a\lesssim b$ if there exists $C > 0$, independent of
relevant parameters, such that $a \leq Cb $.
\item We generically denote by $C(\cdots)$ some positive function that has a non decreasing dependence on its arguments.

\item We set 

\begin{equation*}
\left<B_a\right>_{a>b}=\left\{\begin{array}{l}0\quad \text{if}\,a\leq b,\\
B_a\quad\text{otherwise},\end{array}\right.
\end{equation*}

\item We denote $\Lambda=({\rm Id}-\partial_x^2)^{1/2}$, so that for all $s\geq0$,  $H^s(\R)=\{f\in L^2(\R); \ \Lambda^s f\in L^2(\R)\}$.

\item  Let ${\sf A}=(A_{ij})_{1\leq i,j\leq N}$ and $U=(U_1,\cdots,U_N)^t$ where $A_{ij},U_i$ are in a suitable functional spaces such that the linear operator $\textup{P}$ is well defined on that space (most of the time $\textup{P}=\Lambda^s$ with $s\geq0$ or $\textup{P}=\partial^\alpha_x$ with $\alpha\in\N$) we define the commutators
\begin{align*}
[\textup{P},{\sf A}]U &:=\textup{P}({\sf A}U)-{\sf A}\textup{P}(U)\\
[\textup{P};{\sf A},U]&:=\textup{P}({\sf A}U)-{\sf A}\textup{P}(U)-\textup{P}({\sf A})U.
\end{align*}

\end{itemize}

We conclude by introducing a new type of commutator adapted to the Leibnitz formula displayed in Lemma \ref{IL} below. 

\begin{definition}
Let  $A,B\in \R^N$, $j\in\{0,1,2\}$, we define, the following commutators:
\begin{align*}
\llbracket\partial_x^\alpha  \Dp^j,A\rrbracket B&:= \partial_x^\alpha( \Dp^j(AB))-  ({\sf M}^jA) (\partial^\alpha_x \Dp^j B),\\
 \llbracket\partial_x^\alpha  \Dp^j;A,B\rrbracket&:= \partial_x^\alpha(  \Dp^j(AB))-({\sf M}^jA)( \partial^\alpha_x  \Dp^j B)-({\sf M}^jB)( \partial^\alpha_x \Dp^j A).
\end{align*}
\end{definition}
Recalling the convention that  ${\sf M}^0={\sf Id}$, we have 
\begin{align*}
	\llbracket\partial_x^\alpha  \Dp^0,A\rrbracket B&= [\partial_x^\alpha, A]B,\\
	\llbracket\partial_x^\alpha  \Dp^0;A,B\rrbracket&= [\partial_x^\alpha;A,B].
\end{align*}

\section{Technical tools}\label{section3}
In this section we provide several preliminary results that will be used throughout this paper. In particular, we provide embeddings, product and commutator estimates adapted to our functional framework, and which mostly follow from standard estimates in Sobolev spaces. 
\begin{lemma}\label{IL}
Let $N\in\N^*$, $F,G\in\R^N$,  we have the following identities:
\begin{enumerate}
\item (Abel's summation)
\begin{equation*}
{\sf S}(FG)=F({\sf S}G)-{\sf S}_0 ((\Dp F) (\ru{\sf S}G)).\end{equation*}

\item (first order Leibnitz formula)
\begin{align*}
&\Dp(FG)=(\Dp F)({\sf M}G)+({\sf M}F)(\Dp G).
\end{align*}

\item (second order Leibnitz formula) 
\begin{equation*}
\Ds(FG)=(\Ds F) ({\sf M}^2G)+({\sf M}^2F)(\Ds G)+2({\sf M}\Dp F)({\sf M}\Dp G).
\end{equation*}
\end{enumerate}
\end{lemma}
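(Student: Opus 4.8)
The three identities are purely algebraic finite-dimensional statements, so the plan is to verify each one componentwise, exploiting that all the operators involved (${\sf S}$, ${\sf S}_0$, $\Dp$, $\Dr$, ${\sf M}$, $\widetilde{\sf M}$, $\ru$, $\rd$) are explicitly given band matrices with constant entries. First I would fix $1\le i\le N$ (or $1\le i\le N-1$, resp.\ $1\le i\le N-2$, depending on the target dimension) and write out both sides of the claimed identity as explicit sums over the components of $F$ and $G$, then compare.

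For item (1), the $i$-th component of the left-hand side is $\frac1N\sum_{k=i}^N F_kG_k$. On the right-hand side, $(F({\sf S}G))_i=F_i\cdot\frac1N\sum_{k=i}^NG_k$, while $(\ru{\sf S}G)_m=\frac1N\sum_{k=m+1}^NG_k$ and $(\Dp F)_m=N(F_m-F_{m+1})$, so the second term's $i$-th component is $\frac1N\sum_{m=i}^{N-1}(F_m-F_{m+1})\big(\sum_{k=m+1}^NG_k\big)$ (the extra row of ${\sf S}_0$ contributes only indices $m\ge i$). The identity then reduces to the classical Abel summation-by-parts rearrangement
\[
\sum_{k=i}^N F_kG_k \;=\; F_i\sum_{k=i}^N G_k \;-\;\sum_{m=i}^{N-1}(F_m-F_{m+1})\sum_{k=m+1}^NG_k,
\]
which one checks by swapping the order of summation in the double sum (each $G_k$ with $k>i$ picks up the telescoping coefficient $\sum_{m=i}^{k-1}(F_m-F_{m+1})=F_i-F_k$, and $G_i$ picks up $F_i$). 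Care must be taken with the boundary index — specifically that ${\sf S}_0$ is ${\sf S}$ with the last column removed, which exactly ensures no spurious $G_N$ term and that the sum over $m$ stops at $N-1$.

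For items (2) and (3) the verification is more routine. For (2), the $i$-th component of $\Dp(FG)$ is $N(F_iG_i-F_{i+1}G_{i+1})$; adding and subtracting $N F_{i+1}G_i$ (or $N F_iG_{i+1}$) gives $N(F_i-F_{i+1})\cdot\frac{G_i+G_{i+1}}2 + \frac{F_i+F_{i+1}}2\cdot N(G_i-G_{i+1})$ after a short computation using $a b - a'b' = \tfrac{a-a'}2(b+b') + \tfrac{a+a'}2(b-b')$, which is precisely $(\Dp F)_i({\sf M}G)_i+({\sf M}F)_i(\Dp G)_i$. For (3), I would simply apply (2) twice: $\Ds(FG)=\Dr\big(\Dp(FG)\big)=\Dr\big((\Dp F)({\sf M}G)+({\sf M}F)(\Dp G)\big)$, then apply the first-order Leibniz rule (in dimension $N-1$, i.e.\ with $\Dr$ and $\widetilde{\sf M}$ in place of $\Dp$ and ${\sf M}$) to each product, and collect terms; the cross terms combine to $2({\sf M}\Dp F)({\sf M}\Dp G)$ once one checks the compatibility identities $\widetilde{\sf M}\Dp = \Dp{\sf M}$ and $\widetilde{\sf M}{\sf M}={\sf M}^2$ relating the truncated and untruncated operators (both are immediate from the band structure). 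The only mild obstacle throughout is bookkeeping of matrix dimensions and the truncation conventions ($\Dr$, $\widetilde{\sf M}$, ${\sf S}_0$, ${\sf M}^0=\Dp^0={\sf Id}$), so I would state the needed operator identities explicitly before assembling the proof of (3).
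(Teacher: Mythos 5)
Your proof is correct and follows essentially the same route as the paper: componentwise verification of the Abel and first-order Leibniz identities (the paper simply states the same scalar identities you verify), and iteration of the first-order rule through $\Dr$ together with the compatibility relation between the truncated and untruncated averaging/differencing operators for the second-order one. The only quibble is notational: your compatibility identity should read $\Dr{\sf M}=\widetilde{{\sf M}}\Dp$ (which is what the paper writes, with its abuse of notation, as $\Dr{\sf M}={\sf M}\Dp$) rather than $\widetilde{{\sf M}}\Dp=\Dp{\sf M}$, whose right-hand side does not typecheck.
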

\begin{proof}
\begin{enumerate}
\item 
Let $F=(F_1,\cdots,F_N)^t\in\R^N$, $G=(G_1,\cdots,G_N)^t\in\R^N$, the identity results from the fact that
\begin{align*}
\sum\limits_{j=i}^{N}F_jG_j-F_i\sum\limits_{k=i}^NG_k=-\sum\limits_{j=i}^{N-1}(F_j-F_{j+1})\sum\limits_{k=j+1}^NG_k,\quad \forall i\in\{1,\cdots,N-1\}.
\end{align*}

\item It results from the fact that \[(FG)_{i+1}-(FG)_i=\left(\frac{F_i+F_{i+1}}{2}\right)(G_{i+1}-G_i)+\left(\frac{G_i+G_{i+1}}{2}\right)(F_{i+1}-F_i).\]

\item It results directly from multiplying the first order Leibnitz formula by $\Dr$ and reapplying this formula a second time using the fact that $\Dr {\sf M}={\sf M}\Dp $.
\end{enumerate}

\end{proof}
\begin{lemma}\label{contop}
Let $N\in\N^*$, $F\in\R^N$. For any $p,q\in[1,\infty]$ the following estimates hold true:
\begin{align*}
\br F\br_{l^p}\leq \br F\br_{l^q},\, \text{when}\; p\leq q.
\end{align*}
\begin{align*}
&\n {\sf T}\n_{l^\infty\rightarrow l^2}\leq 1;\quad\n {\sf S}\n_{l^p\rightarrow l^q}\leq 1.
\end{align*}
The same continuity estimates hold for ${\sf S}_0$ instead of ${\sf S}$.

\end{lemma}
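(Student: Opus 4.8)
The plan is to prove each inequality by reducing to elementary estimates on sums, exploiting the normalization built into the $l^q$ norms.

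First I would establish the norm monotonicity $\br F\br_{l^p}\leq \br F\br_{l^q}$ for $p\leq q$. For $q=\infty$ this is immediate since $\br F\br_{l^p}^p = \frac1N\sum_i \br F_i\br^p \leq \max_i \br F_i\br^p = \br F\br_{l^\infty}^p$. For finite $p\leq q$, I would write $\br F\br_{l^p} = \br F\br_{l^q}\cdot\big( \frac1N\sum_i (\br F_i\br/\br F\br_{l^q})^p\big)^{1/p}$ after normalizing (assuming $F\neq 0$, the trivial case being clear), and observe that each normalized component satisfies $\br F_i\br/\br F\br_{l^q}\leq N^{1/q}$, which is not quite what is needed directly; the clean route is instead to apply Jensen's inequality for the concave function $t\mapsto t^{p/q}$ with respect to the uniform probability measure $\frac1N\sum_i\delta_i$ on $\{1,\dots,N\}$: $\frac1N\sum_i \br F_i\br^p = \frac1N\sum_i (\br F_i\br^q)^{p/q}\leq \big(\frac1N\sum_i \br F_i\br^q\big)^{p/q}$, and raising to the power $1/p$ gives exactly $\br F\br_{l^p}\leq \br F\br_{l^q}$. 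This normalization-by-$N$ is precisely why the inequality goes in the ``opposite'' direction from the usual unnormalized $\ell^p$ embeddings.

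Next I would bound $\n{\sf T}\n_{l^\infty\to l^2}$. Since ${\sf T}F = \sqrt N\,{\sf P}F = (\sqrt N F_1, 0,\dots,0)^t$, we compute $\br {\sf T}F\br_{l^2}^2 = \frac1N\big( (\sqrt N F_1)^2\big) = F_1^2 \leq \br F\br_{l^\infty}^2$, giving $\n{\sf T}\n_{l^\infty\to l^2}\leq 1$.

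For $\n{\sf S}\n_{l^p\to l^q}\leq 1$, I would first treat the case $q=p$, then invoke the already-proven monotonicity to pass from $l^q$ with $q\geq p$ down, and handle $q<p$ separately. Concretely: $({\sf S}F)_i = \frac1N\sum_{k=i}^N F_k$, so by the triangle inequality in $\R$, $\br ({\sf S}F)_i\br \leq \frac1N\sum_{k=i}^N\br F_k\br \leq \frac1N\sum_{k=1}^N \br F_k\br = \br F\br_{l^1}\leq \br F\br_{l^p}$ for every $i$, hence $\br{\sf S}F\br_{l^\infty}\leq \br F\br_{l^p}$; combined with monotonicity this already gives $\n{\sf S}\n_{l^p\to l^q}\leq 1$ whenever $q=\infty$ — but for general $q$ one must work a little harder. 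The robust approach for finite $q$ is to view ${\sf S}$ as averaging: writing $({\sf S}F)_i = \frac{N-i+1}{N}\cdot\frac{1}{N-i+1}\sum_{k=i}^N F_k$, the inner average is a convex combination of the $F_k$, so $\br({\sf S}F)_i\br^q \leq \frac{N-i+1}{N}\cdot\frac{1}{N-i+1}\sum_{k=i}^N\br F_k\br^q \leq \frac1N\sum_{k=1}^N \br F_k\br^q$ by Jensen (using $(N-i+1)/N\leq 1$ and discarding it after the power); summing over $i$ and dividing by $N$ gives $\br{\sf S}F\br_{l^q}^q \leq \frac1N\sum_i \big(\frac1N\sum_k\br F_k\br^q\big) = \br F\br_{l^q}^q$, so $\n{\sf S}\n_{l^q\to l^q}\leq 1$; then for $p\leq q$ precompose with monotonicity $\br F\br_{l^q}\le$ does not help — rather one uses $\br{\sf S}F\br_{l^q}\le\br F\br_{l^q}$, and when $p\neq q$ one first applies $\br{\sf S}F\br_{l^q}\leq\br{\sf S}F\br_{l^{\max(p,q)}}$ or $\br F\br_{l^{\min(p,q)}}\leq\br F\br_{l^p}$ appropriately, always landing on $\br{\sf S}F\br_{l^q}\leq\br F\br_{l^p}$. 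The matrix ${\sf S}_0$ is just ${\sf S}$ with its last column deleted, so $({\sf S}_0F)_i$ is a subsum of $({\sf S}F)_i$ and the identical bounds apply verbatim.

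The main obstacle, though entirely routine, is bookkeeping the interplay between the $N$-normalization and the Jensen/convexity step for the operator ${\sf S}$: one must be careful that the partial sum $\sum_{k=i}^N$ is renormalized by the correct factor ($N-i+1$, the number of terms) before applying Jensen, and that the leftover factor $(N-i+1)/N\leq 1$ is discarded in the right direction. Once this is set up, no nontrivial estimate remains; the $\sqrt N$ in ${\sf T}$ and the $\frac1N$ in ${\sf S}$ are designed exactly so that the operator norms come out to $1$.
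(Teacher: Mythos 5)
Your proof is correct; the paper states this lemma without proof, treating it as elementary, and your Jensen/power-mean argument for the monotonicity of the normalized $l^p$ norms and the direct computations for ${\sf T}$ and ${\sf S}$ are exactly the intended content. One small simplification: the detour through $\n{\sf S}\n_{l^q\to l^q}\leq 1$ is unnecessary, since the chain $\br{\sf S}F\br_{l^q}\leq\br{\sf S}F\br_{l^\infty}\leq\br F\br_{l^1}\leq\br F\br_{l^p}$, which you already wrote down, covers all $p,q\in[1,\infty]$ at once (and applies verbatim to ${\sf S}_0$, whose entries form a sub-sum of those of ${\sf S}$).
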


\begin{lemma}\label{A1}
Let $N\in\N^*$, $s\in\R$, $F\in  H^{s+\frac{1}{2},1}(\R)$. There exists $C>0$ independent of $N$ such that 
\begin{equation*}
\n\Lambda^s F\n_{l^\infty(L^2_x)}\leq C \n F\n_{H^{s+\frac{1}{2},1}}.
\end{equation*}
\end{lemma}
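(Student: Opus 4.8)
The plan is to localize in the horizontal frequency $\xi$, establish a single-frequency \emph{discrete Agmon inequality} with a constant independent of $N$, and then integrate back in $\xi$ while redistributing the extra half-derivative by a Cauchy--Schwarz inequality. This mirrors the continuous embedding of $\{f:\ f\in L^2_\varrho H^{s+1/2}_x,\ \partial_\varrho f\in L^2_\varrho H^{s-1/2}_x\}$ into $L^\infty_\varrho H^s_x$, the role of $\partial_\varrho$ being played by $\Dp$ and that of $\int\!d\varrho$ by the normalized sum $\frac1N\sum_i$.

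\emph{Step 1: discrete Agmon.} I would first prove that for every $a=(a_1,\dots,a_N)\in\R^N$ (or $\mathbb{C}^N$),
\[
\sup_{1\le i\le N}|a_i|^2 \;\le\; \br a\br_{l^2}^2 + C\,\br a\br_{l^2}\,\br \Dp a\br_{l^2},
\]
with $C$ an absolute constant. Starting from $|a_i|^2=\frac1N\sum_{j=1}^N|a_j|^2+\frac1N\sum_{j=1}^N(|a_i|^2-|a_j|^2)$, I would telescope $|a_i|^2-|a_j|^2$ over consecutive indices using $|a_k|^2-|a_{k+1}|^2=\mathrm{Re}\big((a_k-a_{k+1})\overline{(a_k+a_{k+1})}\big)$ together with $a_k-a_{k+1}=\frac1N(\Dp a)_k$; this bounds $\big||a_i|^2-|a_j|^2\big|$ by $\frac1N\sum_{k=1}^{N-1}|(\Dp a)_k|\,|a_k+a_{k+1}|$ \emph{uniformly in $i$ and $j$}, and a Cauchy--Schwarz in $k$ together with $\frac1N\sum_{k=1}^{N-1}|a_k+a_{k+1}|^2\le 4\br a\br_{l^2}^2$ closes the estimate, the normalization constants being controlled uniformly in $N$ by Lemma~\ref{contop}. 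The one point requiring care — and, I expect, the main obstacle — is to telescope $|a_k|^2-|a_{k+1}|^2$ \emph{directly}, rather than estimating $|a_i|-|a_j|$ and then multiplying by $|a_i|+|a_j|$: the latter route leaves a term $|a_i|$ on the right-hand side which cannot be absorbed afterwards because of the half-derivative mismatch between the two pieces of the $H^{s+1/2,1}$ norm.

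\emph{Step 2: integration in frequency.} For fixed $\xi\in\R$, apply Step 1 to $a=\widehat F(\xi):=(\widehat{F_1}(\xi),\dots,\widehat{F_N}(\xi))$, using that $\Dp$ acts on the layer index and hence commutes with the Fourier transform in $x$, so $(\Dp a)_k=\big(\widehat{\Dp F}(\xi)\big)_k$. Since for each $i$ one has, by Plancherel, $\n\Lambda^sF_i\n_{L^2_x}^2=\int_\R(1+\xi^2)^s|\widehat{F_i}(\xi)|^2\,d\xi\le\int_\R(1+\xi^2)^s\sup_k|\widehat{F_k}(\xi)|^2\,d\xi$, taking the supremum over $i$ and inserting the pointwise bound of Step 1 gives
\[
\n\Lambda^sF\n_{l^\infty(L^2_x)}^2 \le \int_\R(1+\xi^2)^s\,\br\widehat F(\xi)\br_{l^2}^2\,d\xi + C\int_\R(1+\xi^2)^s\,\br\widehat F(\xi)\br_{l^2}\,\br \widehat{\Dp F}(\xi)\br_{l^2}\,d\xi .
\]
The first integral equals $\n\Lambda^sF\n_{l^2(L^2_x)}^2\le\n\Lambda^{s+\frac12}F\n_{l^2(L^2_x)}^2=\n F\n_{l^2(H^{s+\frac12}_x)}^2$. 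For the second I would write $(1+\xi^2)^s=(1+\xi^2)^{\frac{2s+1}{4}}(1+\xi^2)^{\frac{2s-1}{4}}$ and apply Cauchy--Schwarz in $\xi$, obtaining the bound $C\,\n\Lambda^{s+\frac12}F\n_{l^2(L^2_x)}\,\n\Lambda^{s-\frac12}\Dp F\n_{l^2(L^2_x)}=C\,\n F\n_{l^2(H^{s+\frac12}_x)}\,\n\Dp F\n_{l^2(H^{s-\frac12}_x)}$. A final Young inequality then bounds the whole right-hand side by $C'\n F\n_{H^{s+\frac12,1}}^2$, which is the claim; note that $s\in\R$ is unrestricted and all integrals are finite since $F\in H^{s+\frac12,1}(\R)$ already ensures $F_i\in H^{s+\frac12}_x$ and $(\Dp F)_k\in H^{s-\frac12}_x$.
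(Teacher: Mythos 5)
Your proof is correct, and it takes a genuinely different (though spiritually parallel) route from the paper's. The paper anchors the discrete telescoping at an endpoint rather than at the mean: it introduces a cutoff vector $\phi$ equal to $1$ on the first half of the indices and decaying linearly to $0$ at the last one, writes $\n\Lambda^sF_i\n_{L^2_x}^2=\int_\R({\sf S}_0(\Dp(\phi(\Lambda^sF)^2)))_i$ for $i\le N/2$, expands via the discrete Leibniz formula of Lemma \ref{IL}, and redistributes the half-derivative not by a weighted Cauchy--Schwarz in frequency but by the self-adjointness of $\Lambda^{1/2}$ under the $x$-integral, replacing $\int(\Dp\Lambda^sF)({\sf M}\Lambda^sF)$ by $\int(\Lambda^{s-\frac12}\Dp F)({\sf M}\Lambda^{s+\frac12}F)$ componentwise; the indices $i>N/2$ are then handled by reversing the order of the layers. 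Your Step 1 inequality $\sup_i\br a_i\br^2\le\br a\br_{l^2}^2+C\br a\br_{l^2}\br\Dp a\br_{l^2}$ is exactly the discrete Agmon inequality that the paper's manipulation encodes, and your telescoping from the average $\frac1N\sum_j\br a_j\br^2$ dispenses with both the cutoff $\phi$ and the two-halves case split, while your Cauchy--Schwarz in $\xi$ after Plancherel achieves the same $\pm\frac12$ redistribution as the paper's duality step. Your observation about telescoping $\br a_k\br^2-\br a_{k+1}\br^2$ directly (rather than $\br a_i\br-\br a_j\br$) is precisely the right precaution, and the constants are uniform in $N$ as required. In short, your argument is the more self-contained and transparent of the two; the paper's version has the advantage of running entirely through the operators ${\sf S}_0$, $\Dp$, ${\sf M}$ and the identities of Lemma \ref{IL} that are reused throughout the rest of the article.
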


\begin{proof}

Let us assume that $N$ is even, and define $\phi:=(\overset{\frac{N}{2}}{\overbrace{1,\cdots,1}},\frac{N-2}{N},\cdots,\frac{4}{N},\frac{2}{N},0)^t\in\R^N$, hence $\Dp(\phi)=N(\overset{\frac{N}{2}-1}{\overbrace{0,\cdots,0}},\frac{2}{N},\cdots,\frac{2}{N})^t
$. Furthermore we notice that 
\begin{equation*}
\br \phi\br_{l^\infty}\leq 1,\quad \br \Dp(\phi)\br_{l^\infty}\leq 2.
\end{equation*}
Let $i\in\left\{1,\cdots,\frac{N}{2}\right\}$. Using Lemma \ref{contop} with Parseval equality we have
\begin{align*}
\n\Lambda^s F_i\n_{L^2_x}^2&=\int_{\R}\br\Lambda^s F_i\br^2\\
&=\int_{\R}  \left({\sf S}_0\Big(\Dp(\phi(\Lambda^s F)^2)\Big)\right)_i\\
&=\int_{\R} \left({\sf S}_0\Big(({\sf M}\phi)(\Dp((\Lambda^s F)^2)\Big)\right)_i+\int_{\R} \left({\sf S}_0\Big(({\sf M}(\Lambda^s F)^2)(\Dp(\phi))\Big)\right)_i\\
&=2\int_{\R} \left ({\sf S}_0\Big(({\sf M}\phi)(\Lambda^{s-\frac{1}{2}}\Dp(F))({\sf M}\Lambda^{s+\frac{1}{2}}F)\Big)\right)_i+\int_{\R} \left({\sf S}_0\Big(({\sf M}(\Lambda^s F)^2)(\Dp(\phi))\Big)\right)_i\\
&\leq 2\br\phi\br_{l^\infty}\int_{\R} \br(\Lambda^{s-\frac{1}{2}}(\Dp(F)))( \Lambda^{s+\frac{1}{2}}F)\br_{l^1}+\int_{\R} \br{\sf M}(\Lambda^s F)^2\br_{l^1}\br\Dp(\phi)\br_{l^\infty}\\
&\lesssim \n F\n_{H^{s+\frac{1}{2},1}}^2.
\end{align*}

Let $\tilde{F}_i:=F_{N-i+1}$ for $i\in\left\{1,\cdots,N\right\}$ and from what proceeds and for $i\in\left\{1,\cdots,\frac{N}{2}\right\}$ we have \begin{align*}
\n\Lambda^s \tilde{F}_i\n_{L^2_x}^2\lesssim \n\Lambda^{s+\frac{1}{2}}\tilde{F}\n_{l^2(L^2_x)}^2+ \n\Lambda^{s-\frac{1}{2}}( \Dp \tilde{F})\n_{l^2(L^2_x)}^2.
\end{align*}
Hence the result follows immediately since $\n\Lambda^{s-\frac{1}{2}}( \Dp\tilde{F})\n_{l^2(L^2_x)}=\n\Lambda^{s-\frac{1}{2}}( \Dp F)\n_{l^2(L^2_x)}$ and\\
$\n\Lambda^{s+\frac{1}{2}}\tilde{F}\n_{l^2(L^2_x)}=\n\Lambda^{s+\frac{1}{2}}F\n_{l^2(L^2_x)}$. We can easily adapt this proof when $N$ is odd.
\end{proof}
\begin{lemma}\label{propD}
Let $N\in\N^*$, $F\in\R^N$, $h_*>0$ and $\underline{H}\in\R^N$ such that $\underset{i\in\{1,\cdots,N\}}{\inf}\underline{H}_i\geq h_*$. Then there exists $C(h_*^{-1})$ independent of $N$ such that
\begin{enumerate}

\item $\br\frac{\Dp}{N}F\br_{l^2}\leq 2\br F\br_{l^2}$ and $\br\frac{\Ds}{N}F\br_{l^2}\leq 2 \br\Dp F\br_{l^2}$.

\item $\br\Ds\frac{1}{\underline{H}}\br_{l^\infty}\leq C(h_*^{-1})\left(\br\Ds\underline{H}\br_{l^\infty}+\br\Dp\underline{H}\br^2_{l^\infty}\right).$
\end{enumerate}
\end{lemma}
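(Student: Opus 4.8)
\textbf{Proof plan for Lemma \ref{propD}.}

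The plan is to treat the two assertions separately, both by reducing discrete operators to elementary componentwise bounds and then reassembling with the normalized norms.

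For item (1): the operator $\frac{\Dp}{N}$ acts as $(\frac{\Dp}{N}F)_i = F_i - F_{i+1}$, so I would write $|(\frac{\Dp}{N}F)_i|^2 \leq 2(|F_i|^2 + |F_{i+1}|^2)$ and sum over $i\in\{1,\dots,N-1\}$; after dividing by $N$ (recalling the normalization $\br\cdot\br_{l^2}^2 = \frac1N\sum|\cdot|^2$ and that the target lives in $\R^{N-1}$, which only helps) one gets $\br\frac{\Dp}{N}F\br_{l^2}^2 \leq 4\br F\br_{l^2}^2$, hence the claim. The second estimate is literally the first one applied to $\Dp F \in \R^{N-1}$ in place of $F$, since $\Ds = \Dr\Dp$ and $\Dr$ is the truncation of $\Dp$; one just has to note that replacing $\frac{\Dp}{N}$ by $\frac{\Dr}{N}$ and $\R^N$ by $\R^{N-1}$ does not change the argument, and that the normalization constants again only help.

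For item (2): this is the discrete analogue of the computation $\partial_\varrho^2(1/\underline{H}) = -\underline{H}^{-2}\partial_\varrho^2\underline{H} + 2\underline{H}^{-3}(\partial_\varrho\underline{H})^2$, and the natural route is to apply the second-order Leibnitz formula of Lemma \ref{IL}(3) to the product $\underline{H}\cdot\frac1{\underline{H}} = (1,\dots,1)^t$, whose $\Ds$ and $\Dp$ both vanish. That gives $0 = (\Ds\underline{H})({\sf M}^2\tfrac1{\underline H}) + ({\sf M}^2\underline H)(\Ds\tfrac1{\underline H}) + 2({\sf M}\Dp\underline H)({\sf M}\Dp\tfrac1{\underline H})$, so $\Ds\tfrac1{\underline H} = -({\sf M}^2\underline H)^{-1}\big[(\Ds\underline H)({\sf M}^2\tfrac1{\underline H}) + 2({\sf M}\Dp\underline H)({\sf M}\Dp\tfrac1{\underline H})\big]$ (componentwise division, legitimate since ${\sf M}^2\underline H$ is a convex-type average of entries all $\geq h_*>0$). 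I still need to control ${\sf M}\Dp\tfrac1{\underline H}$: applying Lemma \ref{IL}(2) to $\underline H\cdot\frac1{\underline H}$ gives $\Dp\tfrac1{\underline H} = -({\sf M}\underline H)^{-1}(\Dp\underline H)({\sf M}\tfrac1{\underline H})$, and then ${\sf M}$ of this is bounded in $l^\infty$ by $C(h_*^{-1})\br\Dp\underline H\br_{l^\infty}$ using that ${\sf M}$, ${\sf M}\underline H$, ${\sf M}\tfrac1{\underline H}$ are all $l^\infty\to l^\infty$ bounded with constants depending only on $h_*^{-1}$ (each entry of ${\sf M}F$ is an average of two entries of $F$, and entries of $\tfrac1{\underline H}$ are $\leq h_*^{-1}$). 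Likewise ${\sf M}^2\tfrac1{\underline H}$ has $l^\infty$ norm $\leq h_*^{-1}$. Substituting these bounds into the expression for $\Ds\tfrac1{\underline H}$ and using $\br FG\br_{l^\infty}\leq \br F\br_{l^\infty}\br G\br_{l^\infty}$ yields $\br\Ds\tfrac1{\underline H}\br_{l^\infty}\leq C(h_*^{-1})\big(\br\Ds\underline H\br_{l^\infty} + \br\Dp\underline H\br_{l^\infty}^2\big)$, as desired.

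The only mildly delicate point — the ``main obstacle'', though it is minor — is bookkeeping the dimension shifts and the $\widetilde{\,\cdot\,}$-truncations so that all products in the Leibnitz identities live in the right $\R^{N-k}$ and so that the componentwise inversions are applied to vectors whose entries are genuinely bounded below by $h_*$ (which holds because every entry of ${\sf M}^j\underline H$ is a weighted average of entries of $\underline H$ with nonnegative weights summing to one). None of the constants depend on $N$ because all the elementary bounds ($|{\sf M}F|_{l^\infty}\leq |F|_{l^\infty}$, $|\underline H^{-1}|_{l^\infty}\leq h_*^{-1}$, etc.) are $N$-free.
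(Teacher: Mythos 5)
Your proposal is correct. Item (1) is handled exactly as in the paper: write out $\tfrac{\Dp}{N}F$ and $\tfrac{\Ds}{N}F=\tfrac{\Dr}{N}(\Dp F)$ componentwise and apply the triangle inequality in the normalized $l^2$ norm. For item (2) you take a slightly different, more structural route: instead of the paper's direct computation, which reduces $\frac{1}{\underline H_i}-\frac{2}{\underline H_{i+1}}+\frac{1}{\underline H_{i+2}}$ to the single fraction $\big(2(\underline H_{i+2}-\underline H_{i+1})(\underline H_{i+1}-\underline H_i)-\underline H_{i+1}(\underline H_{i+2}-2\underline H_{i+1}+\underline H_i)\big)/(\underline H_i\underline H_{i+1}\underline H_{i+2})$ and then reads off the bound, you apply the discrete Leibnitz formulas of Lemma \ref{IL} to the identity $\underline H\cdot\underline H^{-1}=(1,\dots,1)^t$, whose $\Dp$ and $\Ds$ vanish, and solve for $\Dp\tfrac1{\underline H}$ and $\Ds\tfrac1{\underline H}$. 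The two arguments encode the same algebraic identity and yield the same constant of the form $h_*^{-2}\br\Ds\underline H\br_{l^\infty}+2h_*^{-3}\br\Dp\underline H\br_{l^\infty}^2$; yours has the minor advantage of reusing Lemma \ref{IL} and making the analogy with $\partial_\varrho^2(1/\underline h)$ explicit, at the price of the dimension-shift bookkeeping for ${\sf M}$, ${\sf M}^2$ and the componentwise inversions, which you correctly justify via the lower bound ${\sf M}^j\underline H\geq h_*$. Both approaches are $N$-uniform for the reasons you state.
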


\begin{proof}
\begin{enumerate}

\item It follows immediately from the fact that $\frac{\Dp}{N}F=(F_1-F_2,\cdots,F_{N-1}-F_N)^t$ and that $\frac{\Ds}{N}F=N(F_1-2F_2+F_3,\cdots,F_{N-2}-2F_{N-1}+F_{N})^t$.

\item We have for $i\in \{1,\cdots,N-2\}$
\begin{align*}
\left(\Ds\frac{1}{\underline{H}}\right)_i&=N^2\left(
\frac{1}{\underline{H}_i}-\frac{2}{\underline{H}_{i+1}}+\frac{1}{\underline{H}_{i+2}}\right)
\\&=N^2\left(\frac{2(\underline{H}_{i+2}-\underline{H}_{i+1})(\underline{H}_{i+1}-\underline{H}_{i})-\underline{H}_{i+1}(\underline{H}_{i+2}-2\underline{H}_{i+1}+\underline{H}_{i})}{\underline{H}_{i}\underline{H}_{i+1}\underline{H}_{i+2}}\right)
\end{align*}
and the result follows immediately.
\end{enumerate}
\end{proof}
\begin{lemma}\label{comnatu}
Let $N\in\N^*$, $t_0>\frac{1}{2}$.

\begin{enumerate}
\item  For any $s,\,s_1,\,s_2\in\R$ such that $s_1\geq s$, $s_2\geq s$, $s_1+s_2\geq 0$ and $s_1+s_2\geq s+t_0$, there exists $C>0$ independent of $N$ such that for any $(p,q)\in\{(2,\infty),(\infty,2)\}$ and any $F=(F_1,\cdots,F_N)^t\in \left(H^{s_1}(\R^d)\right)^N$ and $G=(G_1,\cdots,G_N)^t\in \left(H^{s_2}(\R^d)\right)^N$, $FG\in l^2(H^s(\mathbb{R}^d))$ and
\begin{align*}
\n FG\n_{l^2(H^s)}\leq C \n F\n_{l^p(H^{s_1}_x)}\n G\n_{l^q(H^{s_2}_x)}\leq \left\{\begin{array}{l}
C \n F\n_{H^{s_1+\frac{1}{2},1}}\n G\n_{H^{s_2,0}}\\
\text{or}\\
C \n F\n_{H^{s_1,0}}\n G\n_{H^{s_2+\frac{1}{2},1}}
\end{array}
\right.
\end{align*}

\item For any $s\geq -t_0$, there exists $C>0$ independent of $N$ such that for any $(p,q),(\tilde{p},\tilde{q})\in\{(2,\infty),(\infty,2)\}$ and any $F=(F_1,\cdots,F_N)^t\in \left(H^{s}(\R^d)\right)^N$ and $G=(G_1,\cdots,G_N)^t\in \left(H^{s}(\R^d)\right)^N\cap\left(H^{t_0}(\R^d)\right)^N$, $FG\in l^2(H^s(\mathbb{R}^d))$ and
\begin{align*}
\n FG\n_{l^2(H^s_x)}&\leq C\n F\n_{l^{p}(H^{t_0}_x)}\n G\n_{l^{q}(H^{s}_x)}+C\left<\n F\n_{l^{\tilde{p}}(H^{s}_x)}\n G\n_{l^{\tilde{q}}(H^{t_0}_x)}\right>_{s>t_0}\\
&\leq C\left\{\begin{array}{l}
\n F\n_{H^{\max(t_0,s)+\frac{1}{2},1}}\n G\n_{H^{s,0}}
\\ \text{or}
\\\n F\n_{H^{\max(t_0+\frac{1}{2},s),1}}\n G\n_{H^{\max(t_0+\frac{1}{2},s),1}}
\end{array}
\right.    
\end{align*}

\item For any $s\in\R$ and $s_1$, $s_2\in\R$ such that $s_1\geq s$, $s_2\geq s-1$ and $s_1+s_2\geq s+t_0$, there exists $C>0$ independent of $N$ such that for any $(p,q)\in\{(2,\infty),(\infty,2)\}$ and any $F=(F_1,\cdots,F_N)^t\in \left(H^{s_1}(\R^d)\right)^N$ and $G=(G_1,\cdots,G_N)^t\in \left(H^{s_2}(\R^d)\right)^N$, $[\Lambda^s,F]G\in\left(L^2(\R^d)\right)^N$ and
\begin{align*}
\n[\Lambda^s,F]G\n_{l^2(L^2_x)}&\leq C \n \Lambda^{s_1}F\n_{l^p(L^2_x)}\n
\Lambda^{s_2}G\n_{l^q(L^2_x)}\leq\left\{\begin{array}{l}
C \n F\n_{H^{s_1+\frac{1}{2},1}}\n G\n_{H^{s_2,0}}\\
\text{or}\\
C \n F\n_{H^{s_1,0}}\n G\n_{H^{s_2+\frac{1}{2},1}}
\end{array}
\right.
\end{align*}
\item For any $s\geq0$, there exists $C>0$ independent of $N$ such that for any $(p,q),(\tilde{p},\tilde{q})\in\{(2,\infty),(\infty,2)\}$ and any $G=(G_1,\cdots,G_N)^t\in \left(H^{s-1}(\R^d)\right)^N $ and any $F=(F_1,\cdots,F_N)^t\in \left(L^\infty(\R^d)\right)^N$ with $\partial_x F_i\in H^{s-1}(\R^d)\cap H^{t_0}(\R^d)$, one has $[\Lambda^s,F]G\in\left(L^2(\R^d)\right)^N$ , and
\begin{align*}
\n\br[\Lambda^s,F]G\br_{l^2}\n_{L^2_x}&\leq C \n \Lambda^{t_0}(\partial_x F)\n_{l^p(L^2_x)}\n
\Lambda^{s-1}G\n_{l^q(L^2_x)}\\&\hspace{1cm}+C\left<\n\Lambda^{s-1}(\partial_x F)\n_{l^{\tilde{p}}(L^2_x)}\n
\Lambda^{t_0}G\n_{l^{\tilde{q}}(L^2_x)}\right>_{s>t_0+1}\\
&\leq C\left\{\begin{array}{l}
\n \partial_xF\n_{H^{\max(t_0+\frac{1}{2},s-\frac{1}{2}),1}}\n G\n_{H^{s-1,0}}
\\ \text{or}
\\\n \partial_xF\n_{H^{\max(t_0+\frac{1}{2},s-1),1}}\n G\n_{H^{\max(s-1,t_0+\frac{1}{2}),1}}
\end{array}
\right.
\end{align*}

\item For any $s\geq0$, there exists $C>0$ independent of $N$ such that for any $(p,q),(\tilde{p},\tilde{q})\in\{(2,\infty),(\infty,2)\}$ and for any $F=(F_1,\cdots,F_N)^t$, $
G=(G_1,\cdots,G_N)^t$ with $F_i,G_i\in H^{s}(\R^d)\cap H^{t_0+1}(\R^d) $ for all $i\in\{1,\cdots,N\}$, one has $[\Lambda^s;F,G]\in \left(L^2(\mathbb{R}^d)\right)^N$ and
\begin{align*}
\n\br [\Lambda^s;F,G]\br_{l^2}\n_{L^2_x}&\leq C\n \Lambda^{t_0+1}F\n_{l^p(L^2_x)}\n\Lambda^{s-1}G
\n_{l^q(L^2_x)}+\n\Lambda^{s-1}F\n_{l^{\tilde{p}}(L^2_x)}\n\Lambda^{t_0+1}G
\n_{l^{\tilde{q}}(L^2_x)}\\
&\leq C\left\{\begin{array}{l}
\n F\n_{H^{\max(t_0+\frac{3}{2},s-\frac{1}{2}),1}}\n G\n_{H^{\max(s-1,t_0+1),0}}
\\ \text{or}
\\\n F\n_{H^{\max(t_0+\frac{3}{2},s-1),1}}\n G\n_{H^{\max(t_0+\frac{3}{2},s-1),1}}
\end{array}
\right.
\end{align*}
The result holds true if we replace $\Lambda^s$ by $\partial_x^\alpha$ for $0\leq\alpha\leq s$.

\end{enumerate}

\end{lemma}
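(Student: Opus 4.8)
The plan is to reduce everything to the classical Kato--Ponce commutator estimates on $\R^d$, applied componentwise, and then repackage the componentwise bounds using the discrete embedding Lemma \ref{A1} together with the normalized $l^2$ norms. The point of each of the five items is that the $l^2(L^2_x)$ norm of a vector is (after the $1/\sqrt N$ normalization) just the average of the scalar $L^2_x$ norms, so a commutator estimate that holds for each index $i$ with a uniform constant automatically upgrades to a vector estimate with the same constant, provided one is careful about which factor is measured in $l^\infty$ and which in $l^2$. The standard scalar facts I would invoke are: for $t_0>d/2$, $\|fg\|_{H^s}\lesssim \|f\|_{H^{s_1}}\|g\|_{H^{s_2}}$ under $s_1,s_2\ge s$, $s_1+s_2\ge 0$, $s_1+s_2\ge s+t_0$ (item 1); the tame product estimate $\|fg\|_{H^s}\lesssim \|f\|_{H^{t_0}}\|g\|_{H^s}+\langle\|f\|_{H^s}\|g\|_{H^{t_0}}\rangle_{s>t_0}$ (item 2); the Kato--Ponce commutator estimate $\|[\Lambda^s,f]g\|_{L^2}\lesssim \|\Lambda^{s_1}f\|_{L^2}\|\Lambda^{s_2}g\|_{L^2}$ under $s_1\ge s$, $s_2\ge s-1$, $s_1+s_2\ge s+t_0$ (item 3); its refined version with only $\nabla f$ appearing, $\|[\Lambda^s,f]g\|_{L^2}\lesssim \|\Lambda^{t_0}\nabla f\|_{L^2}\|\Lambda^{s-1}g\|_{L^2}+\langle\|\Lambda^{s-1}\nabla f\|_{L^2}\|\Lambda^{t_0}g\|_{L^2}\rangle_{s>t_0+1}$ (item 4); and the symmetric commutator $\|[\Lambda^s;f,g]\|_{L^2}\lesssim \|\Lambda^{t_0+1}f\|_{L^2}\|\Lambda^{s-1}g\|_{L^2}+\|\Lambda^{s-1}f\|_{L^2}\|\Lambda^{t_0+1}g\|_{L^2}$ (item 5). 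All of these are available in the literature (e.g. \cite{Lannes13} or standard references) with constants depending only on $s$, $d$ and not on the functions.

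The steps, in order: (i) fix the pair $(p,q)\in\{(2,\infty),(\infty,2)\}$ and the scalar estimate corresponding to the item at hand; (ii) apply the scalar estimate to each component, giving $\|(FG)_i\|_{H^s_x}\le C\,\|F_i\|_{H^{s_1}_x}\|G_i\|_{H^{s_2}_x}$ and variants; (iii) for the $(p,q)=(\infty,2)$ case, bound $\|F_i\|_{H^{s_1}_x}\le \sup_i\|F_i\|_{H^{s_1}_x}=\|F\|_{l^\infty(H^{s_1}_x)}$, pull it out of the sum, and take the $l^2$ norm in $i$ of the remaining $\|G_i\|_{H^{s_2}_x}$; for $(p,q)=(2,\infty)$ do the symmetric thing; this yields the first inequality $\|FG\|_{l^2(H^s)}\le C\|F\|_{l^p(H^{s_1}_x)}\|G\|_{l^q(H^{s_2}_x)}$; (iv) convert the $l^\infty(H^{\sigma}_x)$ norm into an $H^{\sigma+\frac12,1}$ norm by Lemma \ref{A1} (applied with $s=\sigma$ there), and the $l^2(H^\sigma_x)=\|\cdot\|_{H^{\sigma,0}}$ norm is just the definition; this produces the second, ``or''-branch inequality. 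For items 3--5 the scheme is identical, replacing products by commutators and using that $[\Lambda^s,F]G$ and $[\Lambda^s;F,G]$ act componentwise, so $([\Lambda^s,F]G)_i=[\Lambda^s,F_i]G_i$; the $\langle\cdot\rangle_{s>t_0}$ conditional terms are carried along verbatim. The final remark that $\Lambda^s$ may be replaced by $\partial_x^\alpha$ with $0\le\alpha\le s$ follows because the same Kato--Ponce-type estimates hold for $\partial_x^\alpha$ (one can even bound $|[\partial_x^\alpha,f]g|$ pointwise by a finite sum of products of derivatives and conclude with the tame product estimate).

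The main subtlety — and the only place requiring genuine care rather than bookkeeping — is the passage from $l^\infty(H^\sigma_x)$ control to $H^{\sigma+\frac12,1}$ control via Lemma \ref{A1}, because that lemma \emph{costs half a derivative and a discrete derivative} $\Dp F$. One must check that in each item the exponent appearing in the stated ``or'' inequality is exactly $\max(\cdots)+\tfrac12$ with the ``$,1$'' index, so that Lemma \ref{A1} can indeed be applied at the regularity level $\sigma$ that shows up on the $l^\infty$ side; e.g. in item 2 the $l^p(H^{t_0}_x)$ factor becomes $H^{t_0+\frac12,1}$ but when $s>t_0$ the conditional term forces the larger exponent $\max(t_0+\frac12,s)$, and one has to take the maximum consistently across both factors to land on the symmetric form $\|F\|_{H^{\max(t_0+\frac12,s),1}}\|G\|_{H^{\max(t_0+\frac12,s),1}}$ displayed. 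This is purely a matter of matching indices, and once the scalar inputs and Lemma \ref{A1} are in hand there is no remaining obstacle; the constants are uniform in $N$ precisely because the scalar constants are $N$-independent and Lemma \ref{A1} provides an $N$-independent constant.
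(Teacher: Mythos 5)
Your proposal is correct and follows essentially the same route as the paper: reduce each item to the corresponding classical scalar product/commutator estimate applied componentwise, split the two factors between $l^\infty$ and $l^2$ in the index $i$, and convert $l^\infty(H^\sigma_x)$ into $H^{\sigma+\frac12,1}$ via Lemma \ref{A1}. The paper's proof is just a terser version of this, and your remarks on index-matching and $N$-uniformity of the constants are exactly the points it implicitly relies on.
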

\begin{proof}
\begin{enumerate}
\item The proof results immediately from the following classical product estimate in Sobolev spaces for scalar function $f,g$
\begin{equation}
\n fg\n_{H^s_x}\leq C \n f\n_{H^{s_1}_x}\n g\n_{H^{s_2}_x},
\end{equation}
then conclude with Lemma \ref{A1}.
\item  The proof results immediately from the following classical tame estimate for products in Sobolev spaces and for scalar function $f,g$
\begin{equation}
\n fg\n_{H^s_x}\leq C\n f\n_{H^{t_0}_x}\n g\n_{H^{s}_x}+C\left<\n f\n_{H^{s}_x}\n g\n_{H^{t_0}_x}\right>_{s>t_0},
\end{equation}
then conclude with Lemma \ref{A1}.
\item We fix $(p,q)\in\{(2,\infty),(\infty,2)\}$ and we have $[\Lambda^s;F]G=\left([\Lambda^s,F_i]G_i\right)_i$, hence the proof results immediately using the following classical commutator estimate in Sobolev spaces for scalar functions $f,g$
\begin{align}
\n[\Lambda^s,f]g\n_{L^2_x}
&\lesssim \n \Lambda^{s_1}f\n_{L^2_x}\n
\Lambda^{s_2}g\n_{L^2_x},
\end{align}
then conclude with Lemma \ref{A1}.

\item The proof results immediately from using the following classical tame estimate for commutators in Sobolev spaces for scalar functions $f,g$
\begin{equation}
\n[\Lambda^s,f]g\n_{L^2_x}\leq C \n \Lambda^{t_0}(\partial_x f)\n_{L^2_x}\n
\Lambda^{s-1}g\n_{L^2_x}+C\left<\n\Lambda^{s-1}(\partial_x f)\n_{L^2_x}\n
\Lambda^{t_0}g\n_{L^2_x}\right>_{s>t_0+1},
\end{equation}
then conclude with Lemma \ref{A1}.

\item We have $[\Lambda^s;F,G]=\left(\Lambda^s(F_iG_i)-\Lambda^s(F_i)G_i-F_i\Lambda^s(G_i)\right)_i$, the result follows immediately by using the following classical estimate for the symmetric commutator and for scalar functions $f,g$, where $[\Lambda^s;f,g]:= \Lambda^s(fg) - f\Lambda^s g-g\Lambda^s f \in L^2(\R)$ 
\begin{align}\label{symopcommu}
\n [\Lambda^s ; f, g]\n_{L^2_x}\leq C \n f\n_{H^{t_0+1}_x} \n g\n_{H^{s-1}_x}+ C\n f\n_{H^{s-1}_x} \n g\n_{H^{t_0+1}_x},
\end{align}
then conclude by Lemma \ref{A1}.
\end{enumerate}
\end{proof}

\begin{lemma}\label{AlgebreBanach}

Let $N\in\N^*$, $t_0 >\frac{1}{2}$ then we have the following
\begin{enumerate}
\item Let $s\in\N$ such that $s \geq t_0 + \frac{1}{2}$, then there exists $C$ independent of $N$ such that for any $F, G\in H^{s,k}$, $k\in\left\{1,2\right\}$ we have
\begin{align*}
\n FG\n_{H^{s,k}}\leq C\n F\n_{H^{s,k}}\n G\n_{H^{s,k}}.
\end{align*}

\item  Let $s\in\N$ such that $s\geq t_0+\frac{3}{2}$, then there exists $C$ independent of $N$ such that for any $F,G\in H^{s,2}$ it holds
\begin{equation*}
\n FG\n_{H^{s,2}} \leq C\left(\n  F\n_{H^{s,2}} \n  G\n_{H^{s-1,1}} +\n  F\n_{H^{s-1,1}} \n  G\n_{H^{s,2}}\right).
\end{equation*}
\end{enumerate}
\end{lemma}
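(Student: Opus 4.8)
The plan is to reduce everything to the product and commutator estimates already collected in Lemma \ref{comnatu}, using the Leibnitz formulas of Lemma \ref{IL} to handle the discrete derivatives $\Dp^j$ that appear in the $H^{s,k}$ norms. Recall that $\n FG\n_{H^{s,k}}^2 = \sum_{j=0}^k \n \Dp^j(FG)\n_{l^2(H^{s-j}_x)}^2$, so the core task is to estimate each $\Dp^j(FG)$ for $j=0,1,2$. For $j=0$ this is just the tame product estimate, part 1 of Lemma \ref{comnatu} (or part 2), applied with $s_1=s_2=s\geq t_0+\tfrac12$: one gets $\n FG\n_{l^2(H^s_x)}\lesssim \n F\n_{H^{s+\frac12,1}}\n G\n_{H^{s,0}}$, which is controlled by $\n F\n_{H^{s,k}}\n G\n_{H^{s,k}}$ once $k\geq1$, since then $H^{s,k}\hookrightarrow H^{s+\frac12,1}$ is essentially the content of Lemma \ref{A1} repackaged (indeed $\n F\n_{H^{s-\frac12+\frac12,1}}=\n\Lambda^{s-1/2}F\n_{l^2}+\n\Lambda^{s-3/2}\Dp F\n_{l^2}\lesssim \n F\n_{H^{s,1}}$; one must be slightly careful that the $+\frac12$ loss is absorbed by having $\Dp F$ at one order lower, which is exactly how $H^{s,1}$ is built).

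For $j=1$, apply the first-order Leibnitz formula (Lemma \ref{IL}(2)): $\Dp(FG) = (\Dp F)({\sf M}G) + ({\sf M}F)(\Dp G)$. Each term is a product of two $N$-vectors, to which we again apply Lemma \ref{comnatu}; here one needs the trivial bounds $\n{\sf M}F\n_{l^2(H^\sigma_x)}\leq \n F\n_{l^2(H^\sigma_x)}$ (${\sf M}$ is an average, hence $l^2\to l^2$ bounded with norm $\leq 1$, uniformly in $N$ — worth stating explicitly, analogous to Lemma \ref{contop}), and one splits the regularity so that the factor carrying $\Dp$ sits at order $s-1$ and the smooth factor at order $s$ or $t_0$. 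For $j=2$ use the second-order formula (Lemma \ref{IL}(3)): $\Ds(FG) = (\Ds F)({\sf M}^2 G) + ({\sf M}^2 F)(\Ds G) + 2({\sf M}\Dp F)({\sf M}\Dp G)$. The first two terms are handled as before with $\Ds F\in l^2(H^{s-2}_x)$; the cross term $({\sf M}\Dp F)({\sf M}\Dp G)$ is a product of two factors each at order $s-1$, and since $s-1\geq t_0-\tfrac12$ may fall below $t_0$, one invokes part 1 of Lemma \ref{comnatu} with $s_1=s_2=s-1$, $s=s-2$, checking $s_1+s_2 = 2s-2 \geq (s-2)+t_0$, i.e. $s\geq t_0$, which holds. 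Summing the three contributions (and the $j=0,1$ ones) gives part 1.

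For part 2, the structure is identical but one must track which factor gets the top regularity. With $s\geq t_0+\tfrac32$ one runs the same decomposition, but at each application of Lemma \ref{comnatu} one uses the tame form (part 2 or part 4/5), always putting the "high" derivative count on one factor and the $H^{t_0}$-type norm on the other, then symmetrizing; the outcome is the claimed $\n F\n_{H^{s,2}}\n G\n_{H^{s-1,1}} + \n F\n_{H^{s-1,1}}\n G\n_{H^{s,2}}$. The one genuinely delicate point — and the main obstacle — is the $j=2$ cross term $({\sf M}\Dp F)({\sf M}\Dp G)$: here both factors are at the same intermediate order $s-1$, so naively one cannot put "all" the regularity on a single factor and still close with the asymmetric right-hand side. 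The resolution is that $s-1\geq t_0+\tfrac12 > t_0$, so the plain algebra estimate (part 1 of Lemma \ref{comnatu} with $s_1=s_2=s-1\geq t_0+\tfrac12$, giving a bound $\lesssim \n\Dp F\n_{H^{s-1/2,1}}\n\Dp G\n_{H^{s-3/2,0}}$, etc.) already yields $\lesssim \n F\n_{H^{s,2}}\n G\n_{H^{s,2}}$ which must then be downgraded: one observes $\n\Dp F\n_{l^2(H^{s-1}_x)}\leq \n F\n_{H^{s-1,1}}$ and $\n{\sf M}\Dp G\n_{l^2(H^{s-1}_x)}\lesssim \n G\n_{H^{s-1,1}}$, so in fact this term is bounded by $\n F\n_{H^{s-1,1}}\n G\n_{H^{s-1,1}}$, which is absorbed into \emph{both} terms of the asymmetric right-hand side. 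Collecting the bounds for $j=0,1,2$ and taking the square root completes the proof.
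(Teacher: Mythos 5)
Your overall strategy is exactly the paper's: expand $\Dp^j(FG)$ with the Leibnitz formulas of Lemma \ref{IL} and estimate each resulting product with Lemma \ref{comnatu}, using Lemma \ref{A1} to convert $l^\infty$ factors. However, two concrete steps fail as written, both stemming from mishandling the half-derivative loss in Lemma \ref{A1}. First, for the $j=0$ term you apply Lemma \ref{comnatu}(1) with $s_1=s_2=s$ and land on $\n F\n_{H^{s+\frac12,1}}\n G\n_{H^{s,0}}$, then assert $H^{s,k}\hookrightarrow H^{s+\frac12,1}$. This inclusion is false: $\n F\n_{H^{s+\frac12,1}}$ contains $\n \Lambda^{s+\frac12}F\n_{l^2(L^2_x)}$, i.e.\ half an $x$-derivative more on $F$ itself than $H^{s,1}$ controls, and nothing in the definition of $H^{s,k}$ absorbs it (the extra $\Dp F$ term in $H^{s,1}$ sits at order $s-1$, which is irrelevant here). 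The correct move — and the one the paper makes — is to use the tame estimate, Lemma \ref{comnatu}(2), whose symmetric option gives directly $\n \Lambda^s(FG)\n_{l^2(L^2_x)}\lesssim \n F\n_{H^{\max(t_0+\frac12,s),1}}\n G\n_{H^{\max(t_0+\frac12,s),1}}=\n F\n_{H^{s,1}}\n G\n_{H^{s,1}}$ since $s\geq t_0+\frac12$; equivalently, when you do use part (1), you must lower the Sobolev index of the $l^\infty$ factor by $\tfrac12$ \emph{before} invoking Lemma \ref{A1} (the slack in the hypothesis $s_1+s_2\geq s+t_0$ permits this precisely because $s\geq t_0+\frac12$). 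The same bookkeeping error recurs in your treatment of the cross term of part 1.

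Second, your resolution of the cross term $({\sf M}\Dp F)({\sf M}\Dp G)$ in part 2 rests on the inequality $\n \Dp F\n_{l^2(H^{s-1}_x)}\leq \n F\n_{H^{s-1,1}}$, which is off by one derivative: by definition $H^{s-1,1}$ only controls $\Dp F$ in $l^2(H^{s-2}_x)$. Consequently the claimed bound $\n F\n_{H^{s-1,1}}\n G\n_{H^{s-1,1}}$ for this term is not obtainable — to pass from $l^2\times l^2$ to $l^2$ one factor must be placed in $l^\infty$, and Lemma \ref{A1} then consumes a $\Dp$ of that factor, so the $H^{s,2}$ (or at least $H^{s-\frac12,2}$) norm of one of $F,G$ is genuinely needed. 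The correct estimate is $\n \Lambda^{s-2}\big(({\sf M}\Dp F)({\sf M}\Dp G)\big)\n_{l^2(L^2_x)}\lesssim \n \Dp F\n_{l^\infty(H^{t_0}_x)}\n \Dp G\n_{l^2(H^{s-2}_x)}\lesssim \n \Dp F\n_{H^{t_0+\frac12,1}}\n G\n_{H^{s-1,1}}\leq \n F\n_{H^{s,2}}\n G\n_{H^{s-1,1}}$, using $t_0+\frac12\leq s-1$; this is asymmetric but fits the right-hand side of part 2, which is all that is required. With these two repairs (both available from the tame options of Lemma \ref{comnatu} that you already cite in passing), your argument coincides with the paper's proof.
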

\begin{proof}

\begin{enumerate}
\item Let $F,G\in H^{s,1} $ then by Lemma \ref{IL}
\begin{align*}
\n FG\n^2_{H^{s,1}}&= \n \Lambda^s (FG)\n^2_{l^2(L^2_x)}+\n \Lambda^{s-1}(\Dp (FG))\n^2_{l^2(L^2_x)}\\
&\lesssim  \n \Lambda^s (FG)\n^2_{l^2(L^2_x)}+\n\Lambda^{s-1}\left(({\sf M}F)(\Dp G)\right)\n^2_{l^2(L^2_x)}+\n\Lambda^{s-1}\left(({\sf M}G)(\Dp F) \right)\n^2_{l^2(L^2_x)}.
\end{align*}

Since $s\geq t_0+\frac{1}{2}$, and using Lemma \ref{comnatu} $(2)$ we have
\begin{align*}
\n \Lambda^s (FG)\n_{l^2(L^2_x)}\lesssim \n F\n_{H^{s,1}}\n G\n_{H^{s,1}},
\end{align*}
and
\begin{align*}
\n\Lambda^{s-1}(({\sf M}F)\Dp G)\n_{l^2(L^2_x)}
&\lesssim \n F\n_{H^{s,1}}\n G\n_{H^{s,1}}.
\end{align*}
Symmetrically $\n\Lambda^{s-1}(({\sf M}G)(\Dp F )\n_{l^2(L^2_x)}\lesssim\n F\n_{H^{s,1}}\n G\n_{H^{s,1}}$. Consequently $H^{s,1}$ is a Banach algebra.\\

Let $F,G\in H^{s,2}$ then $\n FG\n^2_{H^{s,2}}= \n  FG\n^2_{H^{s,1}}+\n \Lambda^{s-2}(\Ds (FG))\n^2_{l^2(L^2)}$. Using the fact that $H^{s,1}$ is a Banach algebra we have 
\begin{align*}
\n  FG\n_{H^{s,1}}\lesssim \n  F\n_{H^{s,2}}\n  G\n_{H^{s,2}}.
\end{align*}

By Lemma \ref{IL}
\begin{equation*}
\Ds(FG)=({\sf M}^2F)(\Ds G)+({\sf M}^2G)(\Ds F)+2({\sf M} \Dp F)({\sf M}\Dp G).
\end{equation*}

Then by Lemma \ref{comnatu} $(2)$
\begin{align*}
\n \Lambda^{s-2}(({\sf M}^2F)(\Ds G))\n_{l^2(L^2_x)}\lesssim\n  F\n_{H^{s,1}}\n  G\n_{H^{s,2}}.
\end{align*}

Symmetrically we deduce that $\n\Lambda^{s-2}(({\sf M}^2G)(\Ds F))\n_{l^2(L^2_x)}\lesssim\n  F\n_{H^{s,2}}\n  G\n_{H^{s,1}}$. Moreover by Lemma \ref{comnatu} $(1)$ we have
\begin{align*}
\n \Lambda^{s-2}(({\sf M}\Dp G)({\sf M}\Dp F ))\n_{l^2(L^2_x)}
& \lesssim\n  F\n_{H^{s,2}}\n  G\n_{H^{s,2}}.
\end{align*}

\item The proof follows the steps seen previously in $1$ with this time using Lemma \ref{A1} and Lemma \ref{comnatu}  and choosing the convenient values of $p,\,q,\,\tilde{p},\, \tilde{q}$, in addition relying on the fact that $s\geq t_0+\frac{3}{2}$ when estimating.
\end{enumerate}  

\end{proof}

\begin{lemma}\label{comutgen}

Let $N\in\N^*$, $t_0 >\frac{1}{2}$, then the following holds

\begin{enumerate}
\item Let $ s\in\N$ with $s \geq t_0 + \frac{3}{2}$, then there exists $C>0$ independent of $N$ such that 
\begin{itemize}
\item For any $\alpha\in\N$, with $0\leq\alpha\leq s$
\begin{align*}
\n  [\partial_x^\alpha, F]G\n_{l^2(L^2_x)}\leq  C\n  F\n_{H^{s,2}} \n  G\n_{H^{s-1,1}}.
\end{align*}
The result holds true if we replace $\partial_x^\alpha$  by $\Lambda^{s}$.
\item For any $j\in\left\{1,2\right\}$ and $\alpha\in\N$ such that $0\leq\alpha\leq s-j$ 
\begin{align*}
\n  \llbracket\partial_x^\alpha  \Dp^j, F\rrbracket G\n_{l^2(L^2_x)}\leq C  \n F\n_{H^{s,2}} \n  G\n_{H^{s-1,j}}.
\end{align*}
The result holds true if we replace $\partial_x^\alpha$  by $\Lambda^{s-j}$.
\end{itemize}
\item Let $ s\in\N$ with $s > 2+\frac{1}{2}$, then there exists $C>0$ independent of $N$ such that for $\alpha\in\N$ with $0\leq\alpha\leq s-2$ we have
\begin{align*}
\n  \llbracket\partial_x^\alpha  \Dp, F\rrbracket G\n_{l^2(L^2_x)}\leq C\n F\n_{H^{s,2}}\n G\n_{H^{s-2,1}}.
\end{align*}
The result holds true if we replace $\partial_x^\alpha$  by $\Lambda^{s-2}$.
\item  Let $ s\in\N$ with $s \geq t_0 + \frac{5}{2}$, then there exists $C>0$ independent of $N$ such that for any $j\in\left\{1,2\right\}$ and $\alpha\in\N$ such that $0\leq\alpha\leq s-j$ then
\begin{align*}
& \n  \llbracket\partial_x^\alpha  \Dp^j, F,G\rrbracket\n_{l^2(L^2_x)}\leq C  \n F\n_{H^{s-1,2}} \n  G\n_{H^{s-1,2}}.
\end{align*}
The result holds true if we replace $\partial_x^\alpha$  by $\Lambda^{s-j}$.
\end{enumerate}
\end{lemma}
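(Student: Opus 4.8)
The strategy is to reduce every statement to the scalar commutator and tame estimates already collected in Lemma \ref{comnatu}, after unwinding the definitions of the discrete commutators via the Leibnitz formula (Lemma \ref{IL}). The starting observation is that the bracket $\llbracket\partial_x^\alpha \Dp^j,F\rrbracket G$ is not literally a vector of scalar commutators; it mixes the discrete difference operator $\Dp$, the averaging operator ${\sf M}$, and the continuous derivative $\partial_x^\alpha$. So the first move is algebraic: expand $\Dp^j(FG)$ using Lemma \ref{IL}(2)--(3). For $j=1$ this yields $\Dp(FG)=(\Dp F)({\sf M}G)+({\sf M}F)(\Dp G)$, hence $\llbracket\partial_x^\alpha\Dp,F\rrbracket G = \partial_x^\alpha\big((\Dp F)({\sf M}G)\big) + \big[\partial_x^\alpha,{\sf M}F\big](\Dp G)\cdot(\text{bookkeeping})$. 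More precisely, one groups terms so that the ``good'' part is $({\sf M}F)(\partial_x^\alpha\Dp G)$ (which is subtracted off in the definition) and the remainder splits into (i) a genuine scalar commutator $[\partial_x^\alpha,({\sf M}F)_i](\Dp G)_i$ componentwise, and (ii) a pure product term $\partial_x^\alpha\big((\Dp F)({\sf M}G)\big)$. For $j=2$ one uses Lemma \ref{IL}(3), which produces the analogous three groups plus the cross term $2({\sf M}\Dp F)({\sf M}\Dp G)$, to be handled as a product.

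Once this decomposition is in place, each piece is estimated as follows. For the scalar-commutator pieces I invoke Lemma \ref{comnatu}(4) (the tame commutator estimate) applied componentwise and summed in $l^2$, choosing the exponents so that $\partial_x({\sf M}F)$ lands in $H^{t_0}_x$ and $\Dp G$ in $H^{s-1-j}_x$; the normalized $l^2/l^\infty$ duality and the continuity of ${\sf M}$, $\Dp$ on the relevant discrete spaces (implicit in the definitions of $H^{s,k}$ and in Lemma \ref{contop}, together with Lemma \ref{propD}(1) controlling $\Dp/N$) convert these into $\n F\n_{H^{s,2}}\n G\n_{H^{s-1,j}}$. For the product pieces I use Lemma \ref{comnatu}(1)--(2): e.g. $\n\partial_x^\alpha\big((\Dp F)({\sf M}G)\big)\n_{l^2(L^2_x)}\lesssim \n\Dp F\n_{H^{s-1}\cdots}\n {\sf M}G\n\cdots$, again arranging exponents so that the derivative count $\alpha\leq s-j$ and the regularity thresholds $s\geq t_0+\tfrac32$ (resp. $s>2+\tfrac12$, resp. $s\geq t_0+\tfrac52$ for the symmetric bracket) are exactly what is needed. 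The replacement of $\partial_x^\alpha$ by $\Lambda^{s-j}$ is identical since Lemma \ref{comnatu} is stated for both $\Lambda^s$ and $\partial_x^\alpha$. For part (3), the symmetric bracket $\llbracket\partial_x^\alpha\Dp^j;F,G\rrbracket$, one subtracts off \emph{both} good terms $({\sf M}^jF)(\partial_x^\alpha\Dp^jG)$ and $({\sf M}^jG)(\partial_x^\alpha\Dp^jF)$; what survives is a sum of symmetric scalar commutators $[\partial_x^\alpha;({\sf M}^jF)_i,({\sf M}^jG)_i]$ (handled by Lemma \ref{comnatu}(5)) plus, when $j=2$, the symmetric cross term $2({\sf M}\Dp F)({\sf M}\Dp G)$ treated by the product estimate Lemma \ref{comnatu}(1); the extra half-derivative of room in the hypothesis $s\geq t_0+\tfrac52$ accommodates the loss in the symmetric commutator estimate, and one still needs the careful choice of $(p,q),(\tilde p,\tilde q)$ to keep the constant $N$-independent.

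The main obstacle is bookkeeping rather than depth: one has to verify that the algebraic regrouping after applying Lemma \ref{IL} really does isolate exactly the term $({\sf M}^jF)(\partial_x^\alpha\Dp^j G)$ that the definition of $\llbracket\cdot,\cdot\rrbracket$ removes, with no leftover ``diagonal'' contribution, and that every residual term is \emph{either} a scalar commutator \emph{or} a plain product of quantities whose discrete norms are controlled by $\n F\n_{H^{s,2}}$ and $\n G\n_{H^{s-1,j}}$ (resp.\ $H^{s-2,1}$, resp.\ the $H^{s-1,2}\times H^{s-1,2}$ split). A secondary subtlety is uniformity in $N$: one must consistently track the normalized norms and use Lemma \ref{A1} and Lemma \ref{propD}(1) (and the continuity bounds of Lemma \ref{contop}) so that no factor of $N$ or $\sqrt N$ sneaks in through ${\sf M}$, $\Dp$, ${\sf S}$ or the passage between $l^\infty(L^2_x)$ and $H^{s+1/2,1}$. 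Given these, each of the three items follows by assembling the pieces and matching the stated regularity indices; the proof reduces, as the paper indicates for Lemma \ref{AlgebreBanach}(2), to ``choosing the convenient values of $p,q,\tilde p,\tilde q$'' and invoking the preceding lemmas.
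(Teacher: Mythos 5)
Your plan follows essentially the same route as the paper: expand $\Dp^j(FG)$ with the Leibnitz formulas of Lemma \ref{IL}, observe that the subtracted term $({\sf M}^jF)(\partial_x^\alpha\Dp^j G)$ turns one piece into a genuine scalar commutator $[\partial_x^\alpha,{\sf M}^jF]\Dp^j G$, and estimate the remaining pieces as plain products, invoking the product/commutator estimates of Lemma \ref{comnatu} together with Lemma \ref{A1} to keep all constants $N$-independent. That is exactly the paper's proof of items (1) and (2), including the choice $t_0=s-\tfrac32$ for item (2).

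The one point to correct is your treatment of item (3). After subtracting both $({\sf M}^jF)(\partial_x^\alpha\Dp^jG)$ and $({\sf M}^jG)(\partial_x^\alpha\Dp^jF)$, the residual is \emph{not} a symmetric scalar commutator $[\partial_x^\alpha;({\sf M}^jF)_i,({\sf M}^jG)_i]$ (that object would involve $\partial_x^\alpha(({\sf M}^jF)({\sf M}^jG))$, which never appears here). The algebra actually gives
\begin{align*}
\llbracket\partial_x^\alpha\Dp;F,G\rrbracket&=[\partial_x^\alpha,{\sf M}F]\Dp G+[\partial_x^\alpha,{\sf M}G]\Dp F,
\end{align*}
and for $j=2$ the same two ordinary commutators (with ${\sf M}^2$, $\Ds$) plus the cross term $2\partial_x^\alpha\big(({\sf M}\Dp F)({\sf M}\Dp G)\big)$. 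These are handled by Lemma \ref{comnatu}(4) (and (2) for the cross term), not by the symmetric-commutator estimate \ref{comnatu}(5). This is a misidentification rather than a fatal gap: the correct residual is estimated with the same toolbox and the hypothesis $s\geq t_0+\tfrac52$ is what makes the exponents work out, but as written the claimed identity in your part (3) does not hold.
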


\begin{proof}~\\
\begin{enumerate}
\item  Using Lemma \ref{comnatu} $(4)$ we have 
\begin{align*}
\n  [\partial_x^\alpha, F]G\n_{l^2(L^2_x)}\leq C\left(\n \partial_x F\n_{H^{t_0+\frac{1}{2},1}}\n G\n_{H^{s-1,0}}+\n \partial_x F\n_{H^{s-1,0}}\n G\n_{H^{t_0+\frac{1}{2},1}}\right).
\end{align*}
For $j=1$ and $0\leq\alpha\leq s-1$, we have  by Lemma \ref{IL} and Lemma \ref{comnatu} ($(2)$ and $(4)$)
\begin{align*}
\llbracket\partial_x^\alpha&\Dp,F\rrbracket G= \partial_x^\alpha(\Dp(FG))-({\sf M}F)( \partial^\alpha_x(\Dp G))\\
&=\partial_x^\alpha\left(({\sf M}F)(\Dp G)+({\sf M}G)(\Dp F)\right)-({\sf M}F)( \partial^\alpha_x(\Dp G))\\
&=[\partial_x^\alpha,{\sf M}F]\Dp G+\partial^\alpha_x\left(({\sf M}G)(\Dp F)\right)\\
&\lesssim \n\partial_x{\sf M}F\n_{H^{\max(t_0+\frac{1}{2},s-\frac{3}{2}),1}}\n\Dp G\n_{H^{s-2,0}}+\n \Dp F\n_{H^{\max(t_0+\frac{1}{2},s-1),1}}\n{\sf M}G\n_{H^{\max(t_0+\frac{1}{2},s-1),1}}.
\end{align*}

For $j=2$ and $0\leq\alpha\leq s-2$, we have by Lemma \ref{IL} we have  by Lemma \ref{IL} and Lemma \ref{comnatu} ($(2)$ and $(4)$)
\begin{align*}
\llbracket\partial_x^\alpha&\Ds,F\rrbracket G= \partial_x^\alpha(\Ds(FG))-({\sf M}^2F)(\partial^\alpha_x(\Ds G))\\
&= [\partial_x^\alpha,{\sf M}^2F]\Ds G+2\partial_x^\alpha\left(({\sf M}\Dp G)({\sf M}\Dp F)\right)+\partial_x^\alpha\left((\Ds F)({\sf M}^2G)\right)\\
&\lesssim \n \partial_x{\sf M}^2F\n_{H^{\max(t_0+\frac{1}{2},s-\frac{5}{2}),1}} \n \Ds G\n_{H^{s-3,0}} +\n{\sf M}\Dp F\n_{H^{\max(t_0,s-2)+\frac{1}{2},1}}\n {\sf M}\Dp G\n_{H^{s-2,0}}\\
& \hspace{0,5cm}+ \n \Ds F\n_{H^{s-2,0}} \n {\sf M}^2G\n_{H^{\max(t_0,s-2)+\frac{1}{2},1}}. 
\end{align*}
\item The result follows from the fact that 
\begin{equation*}
\llbracket\partial_x^\alpha\Dp,F\rrbracket G=[\partial_x^\alpha,{\sf M}F]\Dp G+\partial^\alpha_x\left(({\sf M}G)(\Dp F)\right),
\end{equation*}
and using Lemma \ref{comnatu} $(2)$ and $(4)$ and choosing the corresponding $(p,q)\in\{2,\infty\}$ with $t_0=s-\frac{3}{2}$, then using Lemma \ref{A1}.
\item For $j=1$ and $0\leq\alpha\leq s-1$,we have  by Lemma \ref{IL} and Lemma \ref{comnatu} $(4)$ 
\begin{align*}
\llbracket\partial_x^\alpha\Dp;F,G\rrbracket&= \partial_x^\alpha(\Dp(FG))-({\sf M}F)( \partial^\alpha_x\Dp G))-({\sf M}G)(\partial^\alpha_x\Dp F)\\
&=\partial_x^\alpha\left(({\sf M}F)(\Dp G)\right)+\partial_x^\alpha\left(({\sf M}G)(\Dp F)\right)-({\sf M}F)(\partial^\alpha_x\Dp G)-({\sf M}G)( \partial^\alpha_x\Dp F)\\
& =[\partial_x^\alpha,{\sf M}F]\Dp G+[\partial_x^\alpha,{\sf M}G]\Dp F\\
&\lesssim \n\partial_x{\sf M}F\n_{H^{\max(t_0+\frac{1}{2},s-2),1}}\n\Dp G\n_{H^{\max(t_0+\frac{1}{2},s-2),1}}\\
&\hspace{5cm}+ \n\partial_x{\sf M}G\n_{H^{\max(t_0+\frac{1}{2},s-2),1}}\n\Dp F\n_{H^{\max(t_0+\frac{1}{2},s-2),1}}.
\end{align*}

For $j=2$ and $0\leq\alpha\leq s-2$, we have by Lemma \ref{IL} and Lemma \ref{comnatu} ($(2)$ and $(4)$)
\begin{align*}
\llbracket\partial_x^\alpha&\Ds;F,G\rrbracket= \partial_x^\alpha(\Ds(FG))-({\sf M}^2F)( \partial^\alpha_x\Ds G)-({\sf M}^2G)( \partial^\alpha_x\Ds F)\\
&=\partial_x^\alpha\left((\Ds F) ({\sf M}^2G)+(\Ds G)( {\sf M}^2F)+2({\sf M}\Dp  F)({\sf M}\Dp G)\right)\\
&\hspace{1cm}-({\sf M}^2F)( \partial^\alpha_x\Ds G)-({\sf M}^2G)(\partial^\alpha_x\Ds F)\\
&=[\partial_x^\alpha,{\sf M}^2G]\Ds F+[\partial_x^\alpha,{\sf M}^2F]\Ds G+2\partial_x^\alpha\left(({\sf M}\Dp  F)({\sf M}\Dp G)\right)\\
&\lesssim \n\partial_x{\sf M}^2G\n_{H^{\max(t_0+\frac{1}{2},s-3),1}}\n\Ds F\n_{H^{s-3,0}}+ \n\partial_x{\sf M}^2F\n_{H^{\max(t_0+\frac{1}{2},s-3),1}}\n\Ds G\n_{H^{s-3,0}}\\
&\hspace{3cm}+ \n{\sf M}\Dp F\n_{H^{\max(t_0+\frac{1}{2},s-2),1}}\n{\sf M}\Dp G\n_{H^{\max(t_0+\frac{1}{2},s-2),1}}.
\end{align*}

\end{enumerate}
This concludes the proof.
\end{proof}

\begin{lemma}\label{composHsk}
Let $h_*, \underline{M}, M^*>0$, $t_0>\frac{1}{2}$ and $s\in\N$ such that  $s\geq t_0+\frac{3}{2}$, there exists $C(h_*^{-1},\underline{M},M^*)>0$,  such that for any $N\in\N^*$ and any $\underline{H}\in w^{2,\infty}$, $H\in H^{s,2}(\R)$,  satisfying
\begin{equation*}
\br\underline{H}\br_{w^{2,\infty}}\leq \underline{M};\quad \n H\n_{H^{s-1,1}}\leq M^*,
\end{equation*}
\begin{equation*}
\underset{(x,i)\in\R\times\{1,\cdots,N\}}{\inf}\underline{H}_i+H_i(x)\geq h_*,
\end{equation*} 
the following holds 
\begin{equation}
\left\|\frac{1}{\underline{H}+H}-\frac{1}{\underline{H}}\right\|_{H^{s,2}}\leq C(h_*^{-1},\underline{M},M^*)\n H\n_{H^{s,2}}. 
\end{equation}
\begin{equation}
\left\|\frac{1}{\underline{H}+H}-\frac{1}{\underline{H}}\right\|_{H^{s-1,1}}\leq C(h_*^{-1},\underline{M},M^*)\n H\n_{H^{s-1,1}}. 
\end{equation}
\end{lemma}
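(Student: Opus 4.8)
The plan is to reduce the estimate for $\frac{1}{\underline H+H}-\frac{1}{\underline H}$ to algebra and composition arguments already available in the excerpt, writing everything in terms of the pointwise identity
\begin{equation*}
\frac{1}{\underline H+H}-\frac{1}{\underline H}=-\frac{H}{\underline H(\underline H+H)}=-H\cdot\frac{1}{\underline H}\cdot\frac{1}{\underline H+H}.
\end{equation*}
Thus it suffices to control $\frac{1}{\underline H}$ and $\frac{1}{\underline H+H}$ in $H^{s,2}$ (resp. $H^{s-1,1}$) and then invoke the Banach algebra property of Lemma \ref{AlgebreBanach}. For $\frac{1}{\underline H}$, the relevant quantities are $\frac{1}{\underline H}$ itself (bounded by $h_*^{-1}$ in $l^\infty$), $\Dp\frac{1}{\underline H}$, and $\Ds\frac{1}{\underline H}$; the first two are elementary and the last is exactly Lemma \ref{propD}(2), which gives $\br\Ds\frac{1}{\underline H}\br_{l^\infty}\leq C(h_*^{-1})(\br\Ds\underline H\br_{l^\infty}+\br\Dp\underline H\br_{l^\infty}^2)\leq C(h_*^{-1},\underline M)$. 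Note that since $\underline H$ has no $x$-dependence, its $H^{s,k}$-type norms reduce to the $w^{k,\infty}$ norm, so $\frac{1}{\underline H}$ is controlled in the relevant spaces purely by $h_*^{-1}$ and $\underline M$.

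The genuinely new piece is the estimate for $\frac{1}{\underline H+H}$, which is the composition of the smooth function $z\mapsto 1/z$ (on $[h_*,\infty)$) with $\underline H+H$. I would prove a composition lemma in the spirit of Lemma \ref{composHsk} itself: writing $G:=\underline H+H$ with $G\geq h_*$ pointwise and $\br\underline H\br_{w^{2,\infty}}\leq\underline M$, $\n H\n_{H^{s-1,1}}\leq M^*$, one shows by the chain rule and the Leibnitz formulas of Lemma \ref{IL} that, for any $\varphi\in C^\infty([h_*,\infty))$ with bounded derivatives up to order $3$ (here $\varphi(z)=1/z$), $\varphi(G)-\varphi(G_0)$ lies in $H^{s,2}$ (resp. $H^{s-1,1}$) with norm $\lesssim C(h_*^{-1},\underline M,M^*)\n H\n_{H^{s,2}}$ (resp. $\n H\n_{H^{s-1,1}}$), where $G_0$ is any fixed reference value. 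Concretely, $\Dp(\varphi(G))$ and $\Ds(\varphi(G))$ expand via Lemma \ref{IL} into sums of terms of the form $(\text{smooth composition})\cdot(\Dp G)$, $(\text{smooth composition})\cdot(\Ds G)$, $(\text{smooth composition})\cdot(\Dp G)(\Dp G)$, etc.; each factor involving a smooth composition is controlled in $l^\infty$ (via the $l^\infty\hookrightarrow$ nothing needed, just the finite-difference mean value theorem and the lower bound $h_*$), and each factor $\Dp G$, $\Ds G$, $\partial_x$-derivatives is controlled by $\n H\n$ plus $\underline M$ using Lemma \ref{A1} and Lemma \ref{comnatu}. Multiplying these out using the tame product estimate Lemma \ref{comnatu}(1)--(2) yields the claimed bound. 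Finally, applying the Banach algebra estimate Lemma \ref{AlgebreBanach}(1) to the product $(-H)\cdot\frac{1}{\underline H}\cdot\frac{1}{\underline H+H}$ and using $\n \frac{1}{\underline H}\n_{H^{s,2}}\leq C(h_*^{-1},\underline M)$, $\n\frac{1}{\underline H+H}\n_{H^{s,2}}\leq C(h_*^{-1},\underline M,M^*)$ (the latter from the composition lemma applied with $\varphi(z)=1/z$ and reference value, say, $h_*$, noting $\frac1{\underline H+H}=\big(\frac1{\underline H+H}-\frac1{h_*}\big)+\frac1{h_*}$) closes the argument; the $H^{s-1,1}$ estimate is identical with the lower-order norm throughout.

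\textbf{Main obstacle.} The delicate point is the composition estimate for $\frac{1}{\underline H+H}$ in $H^{s,2}$: one must track how the discrete derivatives $\Dp,\Ds$ interact with the nonlinearity $1/z$ through the non-standard Leibnitz rules of Lemma \ref{IL} (which feature the averaging operators ${\sf M}$, ${\sf M}^2$ rather than the identity), and verify that every intermediate factor of the form ${\sf M}^{j}(\text{something})$ or $\Dp^{j}(1/(\underline H+H))$ can indeed be bounded, uniformly in $N$, by $C(h_*^{-1},\underline M,M^*)$ times an appropriate norm of $H$. In particular one needs the chain-rule identity $\Dp(\varphi(G))=(\Dp G)\,\psi(G,{\sf shifted}\ G)$ for a smooth $\psi$ arising from the mean value theorem on divided differences, and similarly at second order; keeping the regularity bookkeeping consistent (so that the highest-order term $\Ds H$ appears only linearly, multiplied by quantities bounded in $l^\infty$ or in $H^{s,2}$ with $s$ large enough to be an algebra) is where care is required, but all the needed tools — Lemma \ref{IL}, Lemma \ref{propD}, Lemma \ref{comnatu}, Lemma \ref{AlgebreBanach} — are in place, so no essentially new idea beyond careful use of the discrete dictionary is needed.
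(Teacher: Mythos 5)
Your overall architecture matches the paper's: both reduce the statement via the identity $\frac{1}{\underline H+H}-\frac{1}{\underline H}=-\frac{1}{\underline H}\cdot\frac{H}{\underline H+H}$, control the discrete derivatives of $\frac{1}{\underline H}$ through Lemma \ref{propD}(2), obtain the $x$-regularity of the composition $1/(\underline H+H)$ from the scalar Sobolev composition lemma applied componentwise (combined with Lemma \ref{A1}), and treat $\Dp$, $\Ds$ of the quotient by explicit discrete quotient/Leibnitz computations before invoking Lemma \ref{AlgebreBanach}. So the approach is the right one. However, the way you close the argument contains a genuine error.

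The claims $\n \frac{1}{\underline H}\n_{H^{s,2}}\leq C(h_*^{-1},\underline M)$ and $\n\frac{1}{\underline H+H}\n_{H^{s,2}}\leq C(h_*^{-1},\underline M,M^*)$ are false: $\frac{1}{\underline H}$ is constant in $x$ and $\frac{1}{\underline H+H}$ tends to $\frac{1}{\underline H}\neq 0$ as $|x|\to\infty$, so neither is in $L^2_x$, let alone $H^{s,2}$. Your proposed remedy of writing $\frac1{\underline H+H}=\big(\frac1{\underline H+H}-\frac1{h_*}\big)+\frac1{h_*}$ does not repair this, since $\frac1{\underline H+H}-\frac1{h_*}\to\frac1{\underline H}-\frac1{h_*}\neq 0$ at spatial infinity; the only reference value yielding decay is $\frac{1}{\underline H}$ itself, i.e.\ exactly the difference the lemma is about, which makes the final application of Lemma \ref{AlgebreBanach}(1) to the triple product circular. (The statement "since $\underline H$ has no $x$-dependence, its $H^{s,k}$-type norms reduce to the $w^{k,\infty}$ norm" is the source of the confusion: what is true is that \emph{multiplication by} a constant-in-$x$ vector is bounded on $H^{s,k}$ with operator norm controlled by its $w^{k,\infty}$ norm, not that the vector itself lies in $H^{s,k}$.) The paper avoids the issue by never placing $\frac{1}{\underline H}$ or $\frac{1}{\underline H+H}$ in a Sobolev space: it first gets $\n\Lambda^\sigma(\frac{1}{\underline H+H}-\frac{1}{\underline H})\n_{l^2(L^2_x)}$ from the composition lemma, then reduces to $\frac{H}{\underline H+H}$ (whose numerator supplies the decay), and in the explicit formulas for $\Dp$ and $\Ds$ of that quotient every term carries a factor $\Dp^j H$ or $\partial_x^\alpha H$ in the appropriate Sobolev space, multiplied only by $l^\infty$-bounded quantities or by the already-controlled difference. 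Your proof would be correct after restructuring the last step along these lines.
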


\begin{proof}
For $i\in\{1,\cdots,N\}$ fixed, since $s$ is sufficiently large we notice by Sobolev injection that $H_i$ vanishes at infinity as a consequence we have $\underline{H}_i\geq h_*$. Let $\left(\frac{1}{\underline{H}+H}-\frac{1}{\underline{H}}\right)_i=\frac{1}{\underline{H}_i+H_i}-\frac{1}{\underline{H}_i}=\varphi_i(H_i)$, where $\varphi_i\in \mathcal{C}^\infty(\R;\R)$ such that $\varphi_i(x)=\frac{1}{\underline{H}_i+x}-\frac{1}{\underline{H}_i}$ when $\underline{H}_i+x\geq h_*$, moreover we notice  that there exists $C(\alpha,h_*^{-1})$(independent of i) such that $\n\varphi_i^{(\alpha)}\n_{L^\infty([h_*-\underline{H}_i,+\infty[)}\leq C(\alpha,h_*^{-1}) $, using the fact that $\underline{H}\in w^{2,\infty}$ we can choose $\varphi_i$ in such a way that there exists $C(\alpha,h_*^{-1})$ (independent of $i$) such that $\n\varphi_i^{(\alpha)}\n_{L^\infty(]-\infty,h_*-\underline{H}_i))}\leq C(\alpha,h_*^{-1})$. Hence using the composition Lemma in Sobolev spaces  (see Lemma A.4 in \cite{DB}) and Lemma \ref{A1} we have
\begin{align}
& \left\|\Lambda^s\left(\frac{1}{\underline{H}+H}-\frac{1}{\underline{H}}\right)\right\|_{l^2(L^2_x)}\leq C(h_*^{-1},\n H\n_{l^\infty(H^{t_0}_x)})\n H\n_{H^{s,0}}\label{composition1}.\\
&\left\|\Lambda^{s-1}\left(\frac{1}{\underline{H}+H}-\frac{1}{\underline{H}}\right)\right\|_{l^2(L^2_x)}\leq C(h_*^{-1},\n H\n_{l^\infty(H^{t_0}_x)})\n H\n_{H^{s-1,0}}\label{composition2}.
\end{align}
By Lemma \ref{IL} and Lemma \ref{propD} we find that
\begin{align*}
&\left\|\frac{1}{\underline{H}+H}-\frac{1}{\underline{H}}\right\|_{H^{s,2}}\leq C(h_*^{-1},\br\underline{H}\br_{w^{2,\infty}})\left\|\frac{H}{\underline{H}+H}\right\|_{H^{s,2}}.\\
&\left\|\frac{1}{\underline{H}+H}-\frac{1}{\underline{H}}\right\|_{H^{s-1,1}}\leq C(h_*^{-1},\br\underline{H}\br_{w^{1,\infty}})\left\|\frac{H}{\underline{H}+H}\right\|_{H^{s-1,1}}.
\end{align*}
We have 
for $i\in\{1,\cdots,N-1\}$
\begin{align*}
&\left( \Dp\left(\frac{H}{\underline{H}+H}\right)\right)_i=\left[-N(\underline{H}_i-\underline{H}_{i+1})\left(\frac{H_i+H_{i+1}}{2}\right)+N(H_i-H_{i+1})\left(\frac{\underline{H}_i+\underline{H}_{i+1}}{2}\right)\right]\\
&\hspace{6cm}\times \frac{1}{(\underline{H}_i+H_i)(\underline{H}_{i+1}+H_{i+1})}.
\end{align*}
then
\begin{align*}
\Dp\left(\frac{H}{\underline{H}+H}\right)=\left[-(\Dp\underline{H})({\sf M}H)+(\Dp H)({\sf M}\underline{H})\right]\rd\left(\frac{1}{\underline{H}+H}\right)\ru\left(\frac{1}{\underline{H}+H}\right). 
\end{align*}

We have for $i\in\{1,\cdots,N-2\}$
\begin{align*}
\left(\Dp\left(\rd\left(\frac{1}{\underline{H}+H}\right)\ru\left(\frac{1}{\underline{H}+H}\right)\right)\right)_i=\frac{-2\left({\sf M}\Dp(\underline{H}+H)\right)_i}{(\underline{H}_i+H_i)(\underline{H}_{i+1}+H_{i+1})(\underline{H}_{i+2}+H_{i+2})}.
\end{align*}

and hence 

\begin{align*}
\Ds\left(\frac{H}{\underline{H}+H}\right)=&\bigg[-(\Ds\underline{H})({\sf M}^2H)+(\Ds H)({\sf M}^2\underline{H})\bigg]{\sf M}\left(\rd\left(\frac{1}{\underline{H}+H}\right)\ru\left(\frac{1}{\underline{H}+H}\right)\right)\\
&-2\left[-{\sf M}((\Dp\underline{H})({\sf M}H))+{\sf M}((\Dp H)({\sf M}\underline{H}))\right]({\sf M}\Dp(H+\underline{H}))\\
&\times\rd\rd\left(\frac{1}{\underline{H}+H}\right)\rd\ru\left(\frac{1}{\underline{H}+H}\right)\ru\ru\left(\frac{1}{\underline{H}+H}\right).
\end{align*}

Using the previous equalities and the following arguments

\begin{itemize}
\item The spatial derivatives commute with the operators $\rd,\, \ru,\,,\, {\sf M},\,\Dp,\,\Ds$. 
\item The operators $\rd,\, \ru,\, {\sf M}$ are bounded in $l^2$ and $l^\infty$.
\item Lemma \ref{AlgebreBanach} and estimates  \eqref{composition1} and  \eqref{composition2}.
\end{itemize}
We obtain
\begin{equation*}
\left\|\frac{1}{\underline{H}+H}-\frac{1}{\underline{H}}\right\|_{H^{s,2}}\leq C(h_*^{-1},\br\underline{H}\br_{w^{2,\infty}},\n H\n_{H^{s-1,1}})\n H\n_{H^{s,2}}. 
\end{equation*}
\begin{equation*}
\left\|\frac{1}{\underline{H}+H}-\frac{1}{\underline{H}}\right\|_{H^{s-1,1}}\leq C(h_*^{-1},\br\underline{H}\br_{w^{1,\infty}},\n H\n_{H^{s-1,1}})\n H\n_{H^{s-1,1}}. 
\end{equation*}
Hence the proof is then completed since the constant $C$ is increasing in its arguments.
\end{proof}

We conclude this section by collecting continuity estimates on the operator $P_N$ defined by
\[
P_N:	\begin{array}{ccc}
	\mathcal{C}([\rho_{\surf},\rho_{\bott}])& \rightarrow & \mathbb{R}^N\\
	f&\mapsto &\left(f(\boldsymbol{\rho}_i\right))_{1\leq i\leq N}
\end{array}
\]
\begin{lemma}\label{op P_N}
Let $s\in\N$, $s\geq 2$ there exists $C>0$ such that for any  $N\in\N^*$ and any $f\in X^{\infty,s,2}(\Omega)$ the following holds
\begin{align*}
&\n P_Nf\n_{H^{s,0}}\leq C\n f\n_{\infty,s,0};\quad \n P_Nf\n_{H^{s,1}}\leq C \n f\n_{\infty,s,1}\quad
\n P_Nf\n_{H^{s,2}}\leq C\n f\n_{\infty,s,2}.
\end{align*}
\end{lemma}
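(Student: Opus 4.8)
The plan is to reduce the three claimed inequalities to one-dimensional (in $x$) Sobolev estimates applied pointwise in the density variable, then to handle the discrete differences $\Dp(P_Nf)$ and $\Ds(P_Nf)$ by recognizing them as finite-difference quotients of $f$ in $\varrho$ and bounding those by $\partial_\varrho f$, $\partial_\varrho^2 f$. Concretely, for the first estimate I would write, for each $i$, $(P_Nf)_i = f(\boldsymbol\rho_i,\cdot)$ and observe $\n f(\boldsymbol\rho_i,\cdot)\n_{H^s_x}\leq \n f\n_{L^\infty_\varrho H^s_x}=\n f\n_{\infty,s,0}$; since the $l^2$ norm on $\R^N$ is normalized (an average), taking $\n\cdot\n_{l^2(H^s_x)}$ of the vector $(f(\boldsymbol\rho_i,\cdot))_i$ is bounded by the same quantity, giving $\n P_Nf\n_{H^{s,0}}=\n P_Nf\n_{l^2(H^s_x)}\leq C\n f\n_{\infty,s,0}$.

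For the $H^{s,1}$ estimate, by definition $\n P_Nf\n_{H^{s,1}}^2=\n P_Nf\n_{l^2(H^s_x)}^2+\n\Dp(P_Nf)\n_{l^2(H^{s-1}_x)}^2$. The first term is controlled as above. For the second, $(\Dp(P_Nf))_i = N\big(f(\boldsymbol\rho_i,\cdot)-f(\boldsymbol\rho_{i+1},\cdot)\big)$, and since $\boldsymbol\rho_{i+1}-\boldsymbol\rho_i=1/N$, the fundamental theorem of calculus gives $N\big(f(\boldsymbol\rho_i,\cdot)-f(\boldsymbol\rho_{i+1},\cdot)\big)=-N\int_{\boldsymbol\rho_i}^{\boldsymbol\rho_{i+1}}\partial_\varrho f(\varrho',\cdot)\,d\varrho'$, whose $H^{s-1}_x$ norm is $\leq \sup_{\varrho'\in[\boldsymbol\rho_i,\boldsymbol\rho_{i+1}]}\n\partial_\varrho f(\varrho',\cdot)\n_{H^{s-1}_x}\leq\n\partial_\varrho f\n_{L^\infty_\varrho H^{s-1}_x}\leq\n f\n_{\infty,s,1}$ (here I use $s-1\geq s-1$, consistent with the $X^{\infty,s,2}$ definition). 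Averaging over $i$ in $l^2$ again costs nothing, so $\n\Dp(P_Nf)\n_{l^2(H^{s-1}_x)}\leq\n f\n_{\infty,s,1}$, completing this case.

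The $H^{s,2}$ estimate adds the term $\n\Ds(P_Nf)\n_{l^2(H^{s-2}_x)}$, where $(\Ds(P_Nf))_i=N^2\big(f(\boldsymbol\rho_i,\cdot)-2f(\boldsymbol\rho_{i+1},\cdot)+f(\boldsymbol\rho_{i+2},\cdot)\big)$. The second-order finite-difference quotient equals $N^2\int_0^{1/N}\!\!\int_0^{1/N}\partial_\varrho^2 f(\boldsymbol\rho_i+\sigma+\tau,\cdot)\,d\sigma\,d\tau$ (a standard identity, obtained by writing the central difference as an iterated integral of the second derivative), so its $H^{s-2}_x$ norm is bounded by $\sup_{\varrho'\in[\boldsymbol\rho_i,\boldsymbol\rho_{i+2}]}\n\partial_\varrho^2 f(\varrho',\cdot)\n_{H^{s-2}_x}\leq\n\partial_\varrho^2 f\n_{L^\infty_\varrho H^{s-2}_x}\leq\n f\n_{\infty,s,2}$, and averaging over $i$ is harmless. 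Summing the three contributions gives $\n P_Nf\n_{H^{s,2}}\leq C\n f\n_{\infty,s,2}$. The only mildly delicate point is getting the normalized-$l^2$ bookkeeping right — since $\n G\n_{l^2}^2=\frac1N\sum_i|G_i|^2$ and each $|G_i|$ (in the relevant $H^k_x$ norm) is bounded by the same $N$-independent constant, the average is bounded by that constant with no loss — and correctly identifying the finite-difference-to-derivative integral representations; neither is a real obstacle, so the proof is essentially a bookkeeping exercise once the right identities are in place.
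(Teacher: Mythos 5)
Your proof is correct and follows essentially the same route as the paper: the $H^{s,0}$ bound is immediate from the $L^\infty_\varrho$ control, and the $\Dp$ and $\Ds$ terms are handled by representing the first and second finite differences in $\varrho$ via $\partial_\varrho f$ and $\partial_\varrho^2 f$ (the paper invokes the mean value theorem and a Taylor expansion where you use the equivalent integral representations, which if anything handles the $L^\infty_\varrho(H^{s-j}_x)$ bookkeeping more cleanly via Minkowski's inequality). No gaps.
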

\begin{proof}
It is immediate that
\begin{equation*}
\n P_Nf\n_{H^{s,0}}^2\leq \n f\n_{L^\infty_{\varrho}(H^s_x)}^2.
\end{equation*}
For $i\in\{1,\cdots N\}$, using the mean value theorem  
\begin{align*}
\int_{\R}\br N(\Lambda^{s-1}f(x,\boldsymbol{\rho}_i)-\Lambda^{s-1}f(x,\boldsymbol{\rho}_{i+1})\br^2dx&\leq \n\partial_\varrho f\n_{L^\infty_\varrho({H^{s-1}_x})}^2,
\end{align*}
hence $\n\Lambda^{s-1}\Dp P_Nf\n_{l^2(L_x^2)}\lesssim \n\partial_\varrho f\n_{L^\infty_\varrho({H^{s-1}_x})}$, and
$\n P_Nf\n_{H^{s,1}}\lesssim (\n f\n_{L^\infty_{\varrho}(H^s_x)}+\n\partial_\varrho f\n_{L^\infty_\varrho({H^{s-1}_x})})$.
Using the Taylor expansion for the map $\varrho\longmapsto\Lambda^{s-2}f(\cdot,\varrho)$  
we have 
\begin{align*}
\int_{\R}\br N^2(\Lambda^{s-2}f(x,\boldsymbol{\rho}_i)-2\Lambda^{s-2}f(x,\boldsymbol{\rho}_{i+1})+\Lambda^{s-2}f(x,\boldsymbol{\rho}_{i+2}))\br^2dx&\leq 2\n\partial_\varrho^2\Lambda^{s-2}f\n_{L^\infty_\varrho(L^2_x)}^2,
\end{align*}
consequently,
\begin{align*}
\n\Lambda^{s-2}\Ds P_Nf\n_{l^2(L_x^2)}\leq 2\n\partial_\varrho^2\Lambda^{s-2}f\n_{L^\infty_\varrho(L^2_x)}.
\end{align*}
This concludes the proof.
\end{proof}

\section{Large-time well-posedness}\label{section4}

This section is dedicated to the proof of our first main result, Theorem \ref{timeindepof}, concerning the well-posedness of the initial-value problem for the multilayer system \eqref{mltc-intro}. Let us for convenience rewrite the system here, using conventions and notations introduced in Section \ref{section2}.
\begin{equation}\label{multicouches}
	\left\{\begin{array}{l}
		\partial_tH+(\underline{U}+U)\partial_x H+(\underline{H}+H)\partial_xU=
		\kappa\partial_x^2 H,\\
		
		\partial_t U+\left(\underline{U}+U-\kappa\frac{\partial_xH}{\underline{H}+H}\right)\partial_xU+{\sf \Gamma}\partial_xH=0,
	\end{array}
	\right.
\end{equation}
where we recall that for all $i\in \{1,\cdots,N-1\}$, $\boldsymbol{\rho}_{i+1}-\boldsymbol{\rho}_i=\frac1N$ and ${\sf \Gamma}_{i,j}=\frac1N\frac{\min(\boldsymbol{\rho}_i,\boldsymbol{\rho}_j)}{\boldsymbol{\rho}_i}$.

We first state in Proposition \ref{WPML} below the easy-to-prove local well-posedness of multi-layer system on a ``short'' period of time, that is {\em a priori} vanishing as the number of considered layers $N$ goes to infinity. The goal in this section is to improve this result by obtaining a time of existence uniform with respect to $N$. In order to achieve this goal we shall rely on the energy method. We extract the quasilinear structure of the equations in Section \ref{section4.1} and provide useful estimates on the extracted linearized equations in Section \ref{section4.2}, while the completion of the proof of Theorem \ref{timeindepof} is achieved in Section \ref{section4.3}. 

A crucial ingredient in our energy estimates, dictating our choice of the energy functional, is the following decomposition
\begin{equation}\label{rhogammadecomp}
\boldsymbol{\rho}{\sf \Gamma}=\boldsymbol{\rho}_1({\sf TS})^t{\sf TS}+{\sf S}^t{\sf C}{\sf S}
\end{equation}
(recall Section \ref{section2} for the definition of the matrices ${\sf C}$, ${\sf S}$ and ${\sf T}$). This decomposition mimics an analogous one of the operator
\[ \varrho\mathcal M: h \mapsto \rho_{\surf}\int\limits_{\rho_{\surf}}^{\rho_{\bott}}h(\varrho')d\varrho'+\int\limits_{\rho_{\surf}}^{\varrho}\int\limits_{\varrho'}^{\rho_{\bott}}h(\varrho'')d\varrho''d\varrho',\]
used by Bianchini and Duchêne in \cite{DB} to obtain the analogous well-posedness result for the continously stratified system \eqref{contsystem-intro}.

The following result concerns the well-posedness of the shallow water multi-layer with the $\kappa$ regularization system \eqref{multicouches}, where the time existence of the solution depends on the number of layers $N$. The strategy of the proof is fairly standard, and we only sketch the main steps.

\begin{proposition}\label{WPML}
Let $N\in\mathbb{N}^*$, $0<\kappa\leq 1$, $ s>1+\frac{1}{2}$, $ 0<h^*<1$, $ M_0>0$. There exists  $T_{N}>0$
such that for all $(H_{0},U_{0})\in \left(H^s(\mathbb{R})\right)^N\times \left(H^s(\mathbb{R})\right)^N$  such that  $\|H_{0}\|_{H^{s,0}}+\|U_{0}\|_{H^{s,0}}\leq M_0$  and  $H_{0}+\underline{H}\geq h^{*}$, there exists a unique $(H,U)$ solution to \eqref{multicouches} with $(H_{0},U_{0})\vert_{t=0}=(H_{0},U_{0})$ such that
\[H\in C([0,T_{N}];H^s(\mathbb{R})^N)\cap L^2([0,T_{N}];H^{s+1}(\mathbb{R})^N),\quad 
U\in C([0,T_{N}];H^s(\mathbb{R})^N)
\]
and satisfying 
\begin{equation}\label{noncavii}
\underset{{i\in\{1,\cdots,N\}}}{\inf}H_i+\underline{H}_i\geq h^*/2\quad \text{on}\quad [0,T_{N}]\times  \mathbb{R}.
\end{equation}

\end{proposition}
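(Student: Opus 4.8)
The plan is to follow the standard strategy for local well-posedness of quasilinear parabolic-hyperbolic systems via a fixed-point argument. The system \eqref{multicouches} has a parabolic regularization only in the $H$-equation (the $\kappa\partial_x^2H$ term), while the $U$-equation is a transport equation with source ${\sf \Gamma}\partial_xH$; this is precisely the structure that makes the short-time result "easy". First I would set up the iteration scheme: given $(H^{(n)},U^{(n)})$, define $(H^{(n+1)},U^{(n+1)})$ by solving the linear problem
\begin{equation*}
\left\{\begin{array}{l}
\partial_tH^{(n+1)}-\kappa\partial_x^2H^{(n+1)}=-(\underline{U}+U^{(n)})\partial_xH^{(n)}-(\underline{H}+H^{(n)})\partial_xU^{(n)},\\[4pt]
\partial_tU^{(n+1)}+\big(\underline{U}+U^{(n)}-\kappa\tfrac{\partial_xH^{(n)}}{\underline{H}+H^{(n)}}\big)\partial_xU^{(n+1)}=-{\sf \Gamma}\partial_xH^{(n)},
\end{array}\right.
\end{equation*}
with initial data $(H_0,U_0)$, componentwise in $i$. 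The first equation is a linear heat equation with a forcing term in $C([0,T];H^{s-1})\cap$ (better), so by standard parabolic theory $H^{(n+1)}\in C([0,T];H^s)\cap L^2(0,T;H^{s+1})$; the gain of one derivative from the smoothing is what allows the forcing term $(\underline{H}+H^{(n)})\partial_xU^{(n)}$, which only lies in $H^{s-1}$, to be handled. The second equation is a linear transport equation whose coefficient $\underline{U}+U^{(n)}-\kappa\partial_xH^{(n)}/(\underline{H}+H^{(n)})$ is bounded in $L^\infty_tW^{1,\infty}_x$ (using the non-cavitation lower bound and Lemma \ref{composHsk} to control $1/(\underline{H}+H^{(n)})$), so it propagates $H^s$-regularity of the data and the source, giving $U^{(n+1)}\in C([0,T];H^s)$.

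Next I would close the a priori estimates uniformly on a ball. Fix $M_0$ and work in the set of $(H,U)$ with $\sup_{[0,T]}\big(\n H\n_{H^{s,0}}+\n U\n_{H^{s,0}}\big)\le 2M_0$, $\kappa\int_0^T\n\partial_xH\n_{H^{s,0}}^2\,dt\le$ (suitable constant), and $\inf(\underline H_i+H_i)\ge 3h^*/4$. Energy estimates: testing the $H$-equation with $\Lambda^{2s}H$ componentwise and summing (i.e. in the $l^2(L^2_x)$ inner product) yields
\begin{equation*}
\tfrac12\tfrac{d}{dt}\n H\n_{H^{s,0}}^2+\kappa\n\partial_xH\n_{H^{s,0}}^2\le C\big(\n U\n_{H^{s,0}},\n H\n_{H^{s,0}},\br\underline H\br,\br\underline U\br\big)\big(\n H\n_{H^{s,0}}+\n U\n_{H^{s,0}}\big)\big(\n H\n_{H^{s,0}}+\n\partial_xH\n_{H^{s,0}}\big),
\end{equation*}
where the commutator and product estimates of Lemma \ref{comnatu} and Lemma \ref{AlgebreBanach} are used (with $s>1+\tfrac12\ge t_0+\tfrac12$), and the worst term $\kappa\|\partial_xH\|$ on the right is absorbed by the parabolic term on the left via Young's inequality, paying a factor $\kappa^{-1}$. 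For $U$, testing with $\Lambda^{2s}U$ in $l^2(L^2_x)$, the transport term gives $\tfrac12\int (\partial_x\,\text{coeff})(\Lambda^sU)^2$ which is bounded by $\|\text{coeff}\|_{W^{1,\infty}}\n U\n_{H^{s,0}}^2$, the commutator $[\Lambda^s,\text{coeff}]\partial_xU$ is controlled by Lemma \ref{comnatu}(4), and the source ${\sf \Gamma}\partial_xH$ contributes $\le\n{\sf \Gamma}\partial_xH\n_{H^{s,0}}\n U\n_{H^{s,0}}$; here one must control $\n{\sf \Gamma}\n_{l^2\to l^2}$ uniformly in $N$ (this follows since ${\sf \Gamma}={\sf S}^t{\sf C}{\sf S}/\boldsymbol\rho+\ldots$ type decomposition, or more directly $|{\sf \Gamma}_{ij}|\le 1/N$, so its $l^2$ operator norm is $O(1)$ — alternatively invoke \eqref{rhogammadecomp} together with $\n{\sf S}\n_{l^2\to l^2}\le1$ from Lemma \ref{contop}). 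Combining and using Gr\"onwall, choosing $T_N$ small depending on $M_0$, $\kappa$ and $N$ (through the constants, which at this stage we do \emph{not} need to be $N$-uniform), keeps the iterates in the ball. The non-cavitation bound \eqref{noncavii} is preserved for short time because $\partial_tH^{(n+1)}$ is bounded in $L^\infty_x$ on $[0,T_N]$ (Sobolev embedding, $s>3/2$), so $H_i^{(n+1)}(t,x)\ge H_i^{(n+1)}(0,x)-t\,\n\partial_tH^{(n+1)}\n_{L^\infty}\ge h^*-Ct$, and shrinking $T_N$ gives $\ge h^*/2$.

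Then I would prove contraction of the map in the weaker norm $C([0,T];H^{s-1})$ for $H$ and $C([0,T];H^{s-1})$ (or $L^2$) for $U$: subtracting consecutive equations and running the same energy estimate one regularity level lower, the differences are estimated using the uniform $H^s$-bounds on the iterates, and for $T_N$ small enough the map is a contraction on the complete metric space (the ball in the strong norm, equipped with the weak-norm distance — this is complete by standard weak-compactness/interpolation arguments). This yields existence and uniqueness of a fixed point $(H,U)$ with the stated regularity; uniqueness in the full class follows by the same difference estimate. I expect the main obstacle to be purely bookkeeping: verifying that all the product/commutator estimates from Section \ref{section3} apply at regularity $s>3/2$ with the tensorized $l^2(H^s_x)$ and $H^{s,k}$ norms, and carefully tracking that the $H$-equation's loss of one derivative in the source $(\underline H+H)\partial_xU$ is exactly compensated by the parabolic smoothing — everything else (Gr\"onwall, fixed point, preservation of non-cavitation) is routine. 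Since $N$-uniformity of $T_N$ is explicitly \emph{not} required here (that is the content of Theorem \ref{timeindepof}, proved later), we are free to let all constants depend on $N$, which makes this step genuinely standard, as the authors announce.
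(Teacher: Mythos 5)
Your proposal is correct and follows essentially the same route as the paper: a Picard iteration decoupling the system into a (transport--)diffusion equation for $H$ and a transport equation for $U$ with coefficients and sources frozen at the previous iterate, uniform bounds on a ball, contraction in a weaker norm, and preservation of non-cavitation by shrinking $T_N$. The only (cosmetic) difference is that the paper keeps the convective term $(\underline{U}+U^{n})\partial_xH^{n+1}$ implicit in the $H$-equation and treats it as a transport--diffusion equation, whereas you place it on the right-hand side and rely purely on the heat semigroup; both work here since no uniformity in $N$ or $\kappa$ is required.
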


\begin{proof}[Sketch of the proof] The solution can be constructed through a Picard’s iterative scheme, considering \eqref{multicouches} as transport diffusions equations (on the variable $H$) and transport equations (on the variable $U$), coupled through order-zero terms. Specifically, we define inductively $(H^n,U^n)$ for $n\in\N$ as the solutions to the decoupled equations on each $i-$th layer for $i\in\{1,\cdots,N\}$ as follows
\begin{equation}\label{decoupled}
\left\{\begin{array}{l}
\partial_tH_i^{n+1}+(\underline{U}_i^{n}+U_i^{n})\partial_xH_i^{n+1}-\kappa\partial^2_xH_i^{n+1}=f_i,\quad f_i=-(\underline{H}_i^{n}+H_i^{n})\partial_x U_i^{n},\\ 
\partial_tU_i^{n+1}+\left(\underline{U}_i^{n}+U_i^{n}-\kappa\frac{\partial_xH_i^{n}}{\underline{H}_i^{n}+H_i^{n}}\right)\partial_xU_i^{n+1}=g_i,\quad g_i=-\sum\limits_{j=1}^N{\sf \Gamma}_{i,j}\partial_xH_j^n,\\
{H_i^{n+1}}_{|t=0}={H_{i,0}}\\         
{U_i^{n+1}}_{|t=0}=U_{i,0}.
\end{array}
\right.
\end{equation}

One can find a corresponding time $T_{N}$, depending on $s,N,\kappa,h^*$ and $M_0$ but not on $n$ such that the sequence $(H^n,U^n)$ is uniquely defined on $[0,T_N]$ by \eqref{decoupled}, remains in a ball (in the functional space corresponding to the regularity of $(H,U)$ displayed in the theorem endowed with its natural topology) and satisfies the  non cavitation assumption \eqref{noncavii}. Moreover one shows that it is actually a Cauchy sequence in a weaker functional space, hence using weak convergence and interpolation in Sobolev spaces we can pass to the limit $n\to \infty $ in the equations and obtain the solution $(H,U)$ to \eqref{multicouches}. We infer consequently the continuity in time with respect to the strong topology, as well as uniqueness of solutions at this level of regularity. All these steps are justified using the well-posedness theory of both transport and transport diffusion equations (see for instance chapter 3 of \cite{BCD}), relying in particular on the energy estimates which are displayed in the forthcoming Lemma \ref{esttransdiff,trans}.
\end{proof}

\subsection{Quasilinearization}\label{section4.1}

In this subsection we focus on linearizing system \eqref{multicouches}, this is done by applying the operators ${\sf S},\,{\sf T},\, \partial_x,\, \Dp$ to the equations so as to obtain linear equations satisfied by the derivatives of our unknowns $H,U$.

\begin{lemma}\label{restes}
Let  $s\in\N$ such that $s>2+\frac{1}{2}$, and $M,\underline{M},h_*>0$. There exists ${C=C(s,M,\underline{M},h_*)>0}$, such that for any $N\in\N^*$, $\kappa>0$ and any $(\underline{H},\underline{U})\in w^{2,\infty}$ such that
\begin{equation*}
\br\boldsymbol{\rho}\br_{l^\infty}  +\br\boldsymbol{\rho}^{-1}\br_{l^\infty} + \br\underline{H}\br_{w^{2,\infty}}+\br\Dp\underline{U}\br_{w^{1,\infty}}\leq \underline{M},
\end{equation*}
and any $(H,U)\in C([0,T];H^{s,2}(\R))$ solution to $(\ref{multicouches})$ with some $T>0$ and satisfying \begin{equation*}
\n H(t,\cdot)\n_{H^{s-1,1}}+ \n {\sf S}H(t,\cdot)\n_{H^{s,1}}+ \n {\sf T}{\sf S}H(t,\cdot)\n_{H^{s,0}}+ \n U(t,\cdot)\n_{H^{s,2}}+\kappa^\frac{1}{2} \n H(t,\cdot)\n_{H^{s,2}}\leq M 
\end{equation*}

for all $t\in[0,T]$ and 
\begin{equation*}
\underset{(t,x,i)\in(0,T)\times\R\times\{1,\cdots,N\}}{\inf}\underline{H}_i+H_i(t,x)\geq h_*, \, \text{ the following holds.}
\end{equation*}
\begin{itemize}
\item For all $\alpha\in\N$, $j\in\{0,1\}$ with $0\leq\alpha\leq s-1-j$ , we have
\begin{align*}
\partial_t\partial_x^\alpha\Dp^{j}H+({\sf M}^{j}(\underline{U}+U))\partial_x\partial_x^\alpha\Dp^jH=\kappa\partial_x^2\partial_x^\alpha\Dp^{j}H+\mathcal{R}_{\alpha,j}
\end{align*}
where for every $t\in[0,T]$, $\mathcal{R}_{\alpha,j}(t,\cdot)\in l^2(L^2(\R))$ and $$\n\mathcal{R}_{\alpha,j}(t,\cdot)\n_{l^2(L^2_x)}\leq CM.$$

\item For all $\alpha\in\N$ with $0\leq\alpha\leq s$, we have
\begin{align*}
&\partial_t\partial_x^\alpha H+(\underline{U}+U)\partial_x\partial_x^\alpha H
+(\underline{H}+H)\partial_x\partial_x^\alpha U=\kappa\partial_x^2\partial_x^\alpha H+R_{\alpha,0}\\&
\partial_t\partial_x^\alpha U+{\sf \Gamma}\partial_x\partial_x^\alpha H+\left(\underline{U}+U-\kappa\frac{\partial_xH}{\underline{H}+H}\right)\partial_x \partial_x^\alpha U=\textbf{R}_{\alpha,0}
\end{align*}
where for every $t\in[0,T]$, $({\sf S} R_{\alpha,0}(t,\cdot),{\sf T}{\sf S} R_{\alpha,0}(t,\cdot),\textbf{R}_{\alpha,0}(t,\cdot))\in l^2(L^2(\R))^2\times l^2(L^2(\R))$ and $$\n {\sf S} R_{\alpha,0}(t,\cdot)\n_{l^2(L^2_x)}+\n {\sf T}{\sf S} R_{\alpha,0}(t,\cdot)\n_{l^2(L^2_x)}+\n \textbf{R}_{\alpha,0}(t,\cdot)\n_{l^2(L^2_x)}\leq C M(1+\kappa\n \partial_xH\n_{H^{s,2}}).$$

\item For any $\alpha\in \N$, $j\in\{1,2\}$, such that $0\leq\alpha\leq s-j$, it holds
\begin{align*}
&\partial_t \Dp^j {\sf S}\partial_x^\alpha H+\rd({\sf M}^{j-1}(\underline{U}+U))\partial_x\left(\Dp^j {\sf S}\partial_x^\alpha H\right)=\kappa\partial_x^2( \Dp^j {\sf S}\partial_x^\alpha H)+R_{\alpha,j}\\
&\partial_t\Dp^j\partial_x^\alpha U+ \left({\sf M}^j\left(\underline{U}+U-\kappa\frac{\partial_x H}{\underline{H}+H}\right)\right)\partial_x (\Dp^j\partial_x^\alpha U)=\textbf{R}_{\alpha,j}
\end{align*}
where for every $t\in[0,T]$, $( R_{\alpha,j}(t,\cdot),\textbf{R}_{\alpha,j}(t,\cdot))\in l^2(L^2(\R))\times l^2(L^2(\R))$ and 
\begin{align*}
\n R_{\alpha,j}(t,\cdot)\n_{l^2(L^2_x)}+ \n \textbf{R}_{\alpha,j}(t,\cdot)\n_{l^2(L^2_x)}\leq C M(1+\kappa\n \partial_xH\n_{H^{s,2}}).
\end{align*}

\item For any $\alpha\in \N$, $j\in\{0,1,2\}$, $0\leq\alpha\leq s-j$ , it holds
\begin{align*}
&\partial_t \Dp^j\partial_x^\alpha H+\left({\sf M}^j(\underline{U}+U)\right)\partial_x( \Dp^j\partial_x^\alpha H)=\kappa\partial_x^2(  \Dp^j\partial_x^\alpha H)+r_{\alpha,j}+\partial_x \textbf{r}_{\alpha,j}
\end{align*}

where for every $t\in[0,T]$, $( r_{\alpha,j}(t,\cdot),\textbf{r}_{\alpha,j}(t,\cdot))\in l^2(L^2(\R))\times l^2(L^2(\R))$ and 
\begin{align*}
\kappa^\frac{1}{2}\n r_{\alpha,j}(t,\cdot)\n_{l^2(L^2_x)}+ \n \textbf{r}_{\alpha,j}(t,\cdot)\n_{l^2(L^2_x)}\leq C M.
\end{align*}  

\end{itemize}

\end{lemma}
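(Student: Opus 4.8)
The plan is to derive each family of linearized equations by applying the relevant discrete/continuous operators (${\sf S}$, ${\sf T}$, $\partial_x^\alpha$, $\Dp^j$, and compositions thereof) to the two equations of \eqref{multicouches}, then to collect all the terms that are not of transport–diffusion form into the remainder and bound them using the product, commutator, and composition estimates of Section \ref{section3} (Lemmas \ref{IL}, \ref{contop}, \ref{A1}, \ref{propD}, \ref{comnatu}, \ref{AlgebreBanach}, \ref{comutgen}, \ref{composHsk}) together with the decomposition \eqref{rhogammadecomp}. First I would treat the velocity equation: applying $\partial_x^\alpha \Dp^j$ and using the discrete Leibniz formulas of Lemma \ref{IL} to move ${\sf M}^j$-weighted factors out of the products, one writes $\partial_x^\alpha\Dp^j\big((\underline U+U-\kappa\tfrac{\partial_xH}{\underline H+H})\partial_xU\big)$ as $\big({\sf M}^j(\underline U+U-\kappa\tfrac{\partial_xH}{\underline H+H})\big)\partial_x\partial_x^\alpha\Dp^jU$ plus a commutator $\llbracket\partial_x^\alpha\Dp^j,\cdot,\cdot\rrbracket$-type term; the contribution of ${\sf \Gamma}\partial_xH$ and of the background-shear derivatives go into $\textbf R_{\alpha,j}$. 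For the bound on $\textbf R_{\alpha,j}$ one invokes Lemma \ref{comutgen} for the commutators, Lemma \ref{composHsk} to control $\tfrac{\partial_xH}{\underline H+H}$ in $H^{s,2}$ (which is where the factor $\kappa\n\partial_xH\n_{H^{s,2}}$ enters), and the hypothesis $\br\boldsymbol\rho\br_{l^\infty}+\br\boldsymbol\rho^{-1}\br_{l^\infty}\le\underline M$ to handle ${\sf \Gamma}$ (noting $\n{\sf \Gamma}\n_{l^2\to l^2}\lesssim 1$ uniformly in $N$, which follows from \eqref{rhogammadecomp} and Lemma \ref{contop}).

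Next I would treat the mass equation similarly: applying $\partial_x^\alpha\Dp^j$ and using Lemma \ref{IL} gives the leading transport term $({\sf M}^j(\underline U+U))\partial_x\partial_x^\alpha\Dp^jH$ and the exact diffusion term $\kappa\partial_x^2\partial_x^\alpha\Dp^jH$, while the term $\partial_x((\underline H+H)\partial_xU)=\partial_x\big((\underline H+H)(\partial_xU)\big)$ is kept in divergence form to produce the $\partial_x\textbf r_{\alpha,j}$ piece in the last bullet (this is the form needed later to integrate by parts against $\kappa\partial_x^2$-type dissipation), or expanded to produce $r_{\alpha,j}$ when no derivative needs to be kept outside; the key point is that $(\underline H+H)\partial_x\partial_x^\alpha\Dp^jU$ is \emph{not} lower order and so must either be absorbed using the $\kappa$-smoothing of $H$ (in the $\kappa^{1/2}$-weighted estimates) or be paired with the velocity equation — which is precisely why the first bullet only allows $j\in\{0,1\}$ and a loss of one derivative, and why the $\kappa^{1/2}\n H\n_{H^{s,2}}$ term appears in $M$. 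For the third bullet, applying $\Dp^j{\sf S}$ to the mass equation and using Abel's summation (Lemma \ref{IL}(1)) to commute ${\sf S}$ past the product $(\underline U+U)\partial_xH$ reproduces the integrated structure mirroring $\mathcal M$ in \cite{DB}; the factor $\rd({\sf M}^{j-1}(\underline U+U))$ is exactly what Abel summation combined with Lemma \ref{IL}(2) yields for the top coefficient, and the commutator remainder is controlled by Lemmas \ref{contop} and \ref{comutgen}.

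For the second bullet (no $\Dp$, $\alpha\le s$) I would apply $\partial_x^\alpha$ directly, commute it past the coefficients $(\underline U+U)$ and $(\underline H+H)$ using Lemma \ref{comnatu}(4), and then apply ${\sf S}$ and ${\sf T}{\sf S}$ to the resulting mass remainder: here Lemma \ref{contop} gives $\n{\sf S}\n_{l^p\to l^q}\le1$ and $\n{\sf T}\n_{l^\infty\to l^2}\le1$, so the bounds on ${\sf S}R_{\alpha,0}$ and ${\sf T}{\sf S}R_{\alpha,0}$ follow once the un-projected remainder is estimated; the $\kappa$-contribution $\kappa\partial_x^2\partial_x^\alpha H$ stays as the exact diffusion term, and the term $\kappa\partial_x((\underline H+H)^{-1}\partial_xH\,\partial_xU)$-type pieces are what force the factor $(1+\kappa\n\partial_xH\n_{H^{s,2}})$. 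Throughout, every estimate must be checked to be uniform in $N$ — this is guaranteed because every lemma in Section \ref{section3} is stated with a constant independent of $N$, and \eqref{rhogammadecomp} gives the $N$-uniform operator bound on ${\sf \Gamma}$.

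The main obstacle I expect is purely bookkeeping: there is no single hard estimate, but rather a long case analysis over $(\alpha,j)$ and over which equation/operator combination is being hit, where in each case one must (i) identify the correct leading transport coefficient produced by the discrete Leibniz/Abel formulas (getting the ${\sf M}^j$ versus $\rd{\sf M}^{j-1}$ weights right), (ii) verify that the Sobolev indices appearing in Lemmas \ref{comnatu} and \ref{comutgen} are admissible given the hypothesis $s>2+\tfrac12$ and the stated ranges of $\alpha$, and (iii) check that the composition estimate Lemma \ref{composHsk} applies, i.e. that $\n H\n_{H^{s-1,1}}$ and $\br\underline H\br_{w^{2,\infty}}$ are controlled by $M$ and $\underline M$ so that the constant depends only on $(s,M,\underline M,h_*)$. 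The only genuinely delicate point is making sure the $\kappa$-dependent terms are always placed so that their singular $\kappa^{-1}$-behaviour (hidden in $\kappa\n\partial_xH\n_{H^{s,2}}$, which is really $O(\kappa^{1/2})\cdot\kappa^{1/2}\n\partial_xH\n_{H^{s,2}}$ and hence finite by hypothesis) is tolerated on the correct time scale — this matches the strategy announced in the introduction of relying on the regularization effect of diffusivity "only when necessary".
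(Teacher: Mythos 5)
Your plan follows essentially the same route as the paper's proof: apply the operators, extract the leading transport coefficients via the discrete Leibniz/Abel identities of Lemma \ref{IL}, and bound the resulting remainders with Lemmas \ref{comnatu}, \ref{comutgen}, \ref{composHsk} and the decomposition \eqref{rhogammadecomp}, exactly as the paper does. The only small inaccuracy is in the third bullet, where the paper does not use Abel summation but rather the identities $\Dp{\sf S}=\rd$ and $\Ds{\sf S}=\rd\Dp$ to reduce $\Dp^j{\sf S}\partial_x^\alpha H$ to $\rd\Dp^{j-1}\partial_x^\alpha H$ before applying the Leibniz formula; this does not affect the validity of the plan.
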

\begin{proof} ~\\

\noindent Estimates for $\mathcal{R}_{\alpha,0}=-[\partial_x^\alpha,\underline{U}+U]\partial_xH-\partial_x^\alpha\left((\underline{H}+H)\partial_xU\right)$, with $0\leq\alpha\leq s-1$.\\

\noindent Since $s-1>\frac{1}{2}+\frac{1}{2}$, and using  Lemma \ref{comnatu} ($(2)$ and $(3)$) and Lemma \ref{A1} we have
\begin{align*}
\n\partial_x^\alpha\left(H\,\partial_xU\right)\n_{l^2(L^2_x)}\lesssim\n H\n_{H^{s-1,0}} \n U\n_{H^{s,1}}+ \n H\n_{H^{s-1,1}} \n U\n_{H^{s,0}},
\end{align*}
and
\begin{align*}
\n[\partial_x^\alpha,U]\partial_xH\n_{L^2(L^2_x)}
&\lesssim\n U\n_{H^{s,1}}\n H\n_{H^{s-1,0}},
\end{align*}
moreover 
\begin{align*}
\n\partial_x^\alpha\left(\underline{H}\,\partial_xU\right)\n_{l^2(L^2_x)}\lesssim \br\underline{H}\br_{l^\infty}\n U\n_{H^{s,0}}.
\end{align*}
Hence \begin{align}\label{Rest1}
\n\mathcal{R}_{\alpha,0}\n_{l^2(L^2_x)}\lesssim\left(\br\underline{H}\br_{l^\infty}+\n H\n_{H^{s-1,1}}\right)\n U\n_{H^{s,1}}.
\end{align}

\noindent Estimates for $\mathcal{R}_{\alpha,1}=-\llbracket\partial_x^\alpha\Dp,\underline{U}+U\rrbracket\partial_xH -\partial_x^\alpha\Dp((\underline{H}+H)\partial_xU)$, with $0\leq\alpha\leq s-2$.\\

\noindent By Lemma \ref{IL} we have
\begin{align*}
\partial_x^\alpha\Dp((\underline{H}+H)\partial_xU)=&\partial_x^\alpha\left((\Dp\underline{H})\,({\sf M}\partial_xU)\right)+\partial_x^\alpha\left((\Dp H)\,({\sf M}\partial_xU)\right)+\partial_x^\alpha\left(({\sf M}\underline{H})\,(\Dp\partial_xU)\right)
\\&
+\partial_x^\alpha\left(({\sf M}H)\,(\Dp\partial_xU)\right).
\end{align*}
Using  Lemma \ref{comnatu} $(1)$ and since $s-2>\frac{1}{2}$ we have
\begin{align*}
\n\partial_x^\alpha\left((\Dp H)\,({\sf M}\partial_xU)\right)\n_{l^2(L^2_x)}&\lesssim \n\Dp H\n_{H^{s-2,0}} \n{\sf M}\partial_xU\n_{H^{s-\frac{3}{2},1}}\\
& \lesssim \n H\n_{H^{s-1,1}}\n U\n_{H^{s,1}},
\end{align*}
and
\begin{align*}
\n\partial_x^\alpha\left(({\sf M}H)\,(\Dp\partial_xU)\right)\n_{l^2(L^2_x)}&\lesssim \n{\sf M}H\n_{H^{s-\frac{3}{2},1}} \n\Dp\partial_x U\n_{H^{s-2,0}}\\
& \lesssim \n H\n_{H^{s-1,1}}\n U\n_{H^{s,1}}.
\end{align*}
Using Lemma \ref{comutgen} $(2)$  and since $s-2>\frac{1}{2}$  we have
\begin{align*}
\n\llbracket\partial_x^\alpha\Dp,U\rrbracket\partial_xH \n_{l^2(L^2_x)}
& \lesssim\n U\n_{H^{s,2}} \n H\n_{H^{s-1,1}},
\end{align*}
once again with Lemma \ref{IL}
\begin{align*}
\n\llbracket\partial_x^\alpha\Dp,\underline{U}\rrbracket\partial_xH \n_{l^2(L^2_x)}&=\n\partial_x^\alpha((\Dp\underline{U})({\sf M}\partial_x H)\n_{l^2(L^2_x)}\leq \br\Dp\underline{U}\br_{l^\infty} \n H\n_{H^{s-1,0}}.
\end{align*}
We immediately have 
\begin{align*}
\n\partial_x^\alpha\left((\Dp\underline{H})\,({\sf M}\partial_xU)\right)\n_{l^2(L^2_x)}&\leq \br\Dp\underline{H}\br_{l^\infty}\n U\n_{H^{s-1,0}},
\end{align*}
and
\begin{align*}
\n\partial_x^\alpha\left(({\sf M}\underline{H})\,(\Dp\partial_xU)\right)\n_{l^2(L^2_x)}&\leq \br\underline{H}\br_{l^\infty}\n U\n_{H^{s,1}}.
\end{align*}
Hence \begin{align}\label{Rest2}
\n\mathcal{R}_{\alpha,1}\n_{l^2(L^2_x)}\lesssim \left(\br\underline{H}\br_{w^{1,\infty}}+\n H\n_{H^{s-1,1}}\right)\n U\n_{H^{s,2}}+\br\Dp\underline{U}\br_{l^\infty} \n H\n_{H^{s-1,0}}.
\end{align}

We have  
$R_{\alpha,0}=-[\partial_x^\alpha, U]\partial_xH-[\partial_x^\alpha,H]\partial_x U$ and $\textbf{R}_{\alpha,0}= -\left[\partial_x^\alpha,\underline{U}+U-\kappa\frac{\partial_xH}{\underline{H}+H}\right]\partial_xU$. We notice that 
\begin{align*}
{\sf S}R_{\alpha,0}=-{\sf S}\big([\partial_x^\alpha;U,\partial_xH]+(\partial_x^\alpha U)(\partial_xH) \big)-{\sf S}\big([\partial_x^\alpha;H,\partial_x U]+(\partial_x^\alpha H)(\partial_x U)\big).
\end{align*}
On the other hand using Lemma \ref{IL} and since ${\sf S}_0,\,{\sf S},\,\ru,\,\Dp$ commute with $\partial_x$ we have
\begin{align*}
-{\sf S}[\partial_x^\alpha;U,&\partial_xH]=-\bigg({\sf S}\partial_x^\alpha\left( U\partial_xH\right)-{\sf S}(U\partial_x\partial_x^\alpha H)-{\sf S}\left((\partial_x^\alpha U)(\partial_xH)\right)\bigg)\\
&\hspace{0,8cm}=-\bigg(\partial_x^\alpha \left(U\partial_x{\sf S}H\right)-\partial_x^\alpha\left( {\sf S}_0((\Dp(U))(\ru\partial_x{\sf S}H))\right)-U\partial_x{\sf S}\partial_x^\alpha H\\
&\hspace{2cm}+{\sf S}_0((\Dp(U)(\ru\partial_x{\sf S}\partial_x^\alpha H))-(\partial_x^\alpha U)(\partial_x{\sf S}H)
+{\sf S}_0((\Dp(\partial_x^\alpha U))(\ru\partial_x{\sf S}H))\bigg)
\\&\hspace{0,8cm}=-[\partial_x^\alpha;U,\partial_x{\sf S}H]+{\sf S}_0[\partial_x^\alpha;\Dp(U),\ru\partial_x{\sf S}H]
\end{align*}
and
\begin{align*}
-{\sf S}((\partial_x^\alpha H)(\partial_x U))&=-\bigg((\partial_x U)({\sf S}\partial_x^\alpha H)-{\sf S}_0\big((\Dp(\partial_x U))(\ru({\sf S}\partial_x^\alpha H)\big)\bigg),
\end{align*}
and finally we find 
\begin{align*}
{\sf S}&R_{\alpha,0} =-[\partial_x^\alpha;U,\partial_x{\sf S}H]-
(\partial_x U)({\sf S}\partial_x^\alpha H)+{\sf S}_0\bigg([\partial_x^\alpha;\Dp(U),\ru\partial_x{\sf S}H]+(\Dp(\partial_x U))(\ru{\sf S}\partial_x^\alpha H)\bigg)\\
&\hspace{1cm}-{\sf S}\bigg((\partial_x^\alpha U)(\partial_xH)
+[\partial_x^\alpha;H,\partial_x U]
\bigg).
\end{align*}

Since $\sqrt{N}{\sf P}[\partial_x^\alpha;U,\partial_x{\sf S}H]=[\partial_x^\alpha;U,\partial_x{\sf T}{\sf S}H]$, we immediately recover a similar equation for the trace case
\begin{align*}
{\sf T}&{\sf S}R_{\alpha,0} =-[\partial_x^\alpha;U,\partial_x{\sf T}{\sf S}H]-
(\partial_x U)({\sf T}{\sf S}\partial_x^\alpha H)+{\sf T}{\sf S}_0\bigg([\partial_x^\alpha;\Dp(U),\ru\partial_x{\sf S}H]+(\Dp(\partial_x U))(\ru{\sf S}\partial_x^\alpha H)\bigg)\\
&\hspace{1,5cm}-{\sf T}{\sf S}\bigg((\partial_x^\alpha U)(\partial_xH)
+[\partial_x^\alpha;H,\partial_x U]
\bigg).\end{align*}
Estimates for ${\sf S}R_{\alpha,0}$, ${\sf T}{\sf S}R_{\alpha,0}$  with $0\leq\alpha\leq s$ .\\

\noindent Using Lemma \ref{A1} and since $s>\frac{1}{2}+\frac{3}{2}$, we have
\begin{align*}
\n (\partial_xU)({\sf S}\partial_x^\alpha H)\n_{l^2(L^2_x)}\leq\n\partial_xU\n_{l^\infty(L^\infty_x)}\n {\sf S}\Lambda^s H\n_{l^2(L^2_x)}\lesssim \n {\sf S}H
\n_{H^{s,0}} \n U
\n_{H^{s,1}},
\end{align*}
and  
\begin{align*}
\n (\partial_xU)({\sf T}{\sf S}\partial_x^\alpha H)\n_{l^2(L^2_x)}\leq\n\partial_xU\n_{l^\infty(L^\infty_x)}\n {\sf T}{\sf S}\Lambda^s H\n_{l^2(L^2_x)} \lesssim  \n {\sf T}{\sf S}H
\n_{H^{s,0}}\n U
\n_{H^{s,1}}.
\end{align*}
Using Lemma \ref{comnatu} $(6)$ and since $s-2>\frac{1}{2}$ we obtain 
\begin{align*}
\n[\partial_x^\alpha;U,\partial_x{\sf S}H]\n_{l^2(L^2_x)}&\lesssim \n U\n_{H^{\max(s-2+\frac{3}{2},s-\frac{1}{2}),1}}\n\partial_x{\sf S}H\n_{H^{\max(s-1,s-2+1),0}}\\
&\lesssim\n U
\n_{H^{s,1}}\n {\sf S}H
\n_{H^{s,0}},
\end{align*}
and 
\begin{align*}
\n[\partial_x^\alpha;U,\partial_x{\sf T}{\sf S}H]\n_{l^2(L^2_x)}&\lesssim\n U\n_{H^{\max(s-2+\frac{3}{2},s-\frac{1}{2}),1}}\n\partial_x{\sf T}{\sf S}H\n_{H^{\max(s-1,s-2+1),0}}\\
&\lesssim\n U
\n_{H^{s,1}}\n {\sf T}{\sf S}H
\n_{H^{s,0}}.
\end{align*}
Moreover using Lemma \ref{contop}, and estimate \eqref{symopcommu} and since $s-1>\frac{1}{2}+1$ we have 

\begin{align*}
\n\br[\partial_x^\alpha;\Dp(U),\ru\partial_x{\sf S}H]\br_{l^1}\n_{L^2_x}
&\lesssim \n\Lambda^{s-1}(\Dp(U))\n_{l^2(L^2_x)}\n\Lambda^{s-2+1}(\partial_x{\sf S}H)\n_{l^2(L^2_x)}\\
&\hspace{2,5cm}+\n\Lambda^{s-2+1}(\Dp(U))\n_{l^2(L^2_x)}\n\Lambda^{s-1}(\partial_x{\sf S}H)\n_{l^2(L^2_x)}\\
&\lesssim\n U\n_{H^{s,1}}\n {\sf S}H\n_{H^{s,0}},
\end{align*}
and  
\begin{align*}
\n \br[\partial_x^\alpha;H,\partial_xU]\br_{l^1}\n_{L^2_x}
&\lesssim  \n \Lambda^{s-1}H\n_{l^2(L^2_x)}\n \Lambda^{s-2+1}\left(\partial_xU\right)\n_{l^2(L^2_x)}\\
&\hspace{5cm}+\n\Lambda^{s-1}\left(\partial_xU\right)\n_{l^2(L^2_x)}\n
\Lambda^{s-2+1}H\n_{l^2(L^2_x)}
\\
&\lesssim\n H\n_{H^{s-1,0}}\n U
\n_{H^{s,0}}.\end{align*}
Using Lemma \ref{contop} and classical product estimates in Sobolev spaces and the fact that and since $s-1>\frac{1}{2}+1$ we have  
\begin{align*}
\n  \br (\partial_x^\alpha U)(\partial_xH)\br_{l^1}\n_{L^2_x}
&\lesssim \n\Lambda^s\left( U\right)\n_{l^2(L^2_x)} \n\Lambda^{s-2}\left(\partial_xH\right)\n_{l^2(L^2_x)}
\\
& \lesssim\n U
\n_{H^{s,0}}\n H
\n_{H^{s-1,0}}, \end{align*}
and 

\begin{align*}
\n \br(\Dp(\partial_x U))(\ru{\sf S}\partial_x^\alpha H)\br_{l^1}\n_{L^2_x}&\lesssim \n\Lambda^{s-2+1}(\Dp U)\n_{l^2(L^2_x)}
\n \Lambda^s {\sf S}H\n_{l^2(L^2_x)}\\
&\lesssim \n U\n_{H^{s,1}}\n {\sf S}H\n_{H^{s,0}}.
\end{align*}

Using the fact that $\n{\sf S}_0\n_{l^1\to l^2}\leq 1$, $\n{\sf T S}_0\n_{l^1\to l^2}\leq 1$, $\n{\sf S}\n_{l^1\to l^2}\leq 1$ (Lemma \ref{contop}) we can deduce  estimates of the remaining terms of ${\sf T}{\sf S}R_{\alpha,0}$ follow directly from the previous estimates and moreover  we have \begin{equation}\label{Rest3}
\n {\sf S}R_{\alpha,0}\n_{l^2(L^2_x)}+\n {\sf T}{\sf S}R_{\alpha,0}\n_{l^2(L^2_x)}\leq \left( \n H\n_{H^{s-1,0}}+ \n {\sf S}H\n_{H^{s,0}}+\n {\sf T}{\sf S}H\n_{H^{s,0}}\right)\n U\n_{H^{s,1}}.
\end{equation}
Estimate for $\textbf{R}_{\alpha,0}$ with $0\leq\alpha\leq s$.
\begin{align*}
\textbf{R}_{\alpha,0}=-[\partial_x^\alpha,U]\partial_xU+\kappa\frac{1}{\underline{H}+H}[\partial_x^\alpha,\partial_xH]\partial_xU+\kappa\left[\partial_x^\alpha,\frac{1}{\underline{H}+H}\right](\partial_xH)(\partial_x U).
\end{align*}
Using Lemma \ref{comnatu} $(4)$ and since $s-\frac{3}{2}>\frac{1}{2}$, we have
\begin{align*}
\n[\partial_x^\alpha,U]\partial_xU\n_{l^2(L^2_x)}&\lesssim \n\partial_xU\n_{H^{s-1,1}}\n\partial_xU\n_{H^{s-1,1}}\\
& \lesssim \n U\n_{H^{s,1}}^2.
\end{align*}
Using Lemma \ref{comnatu} $(4)$ and since $s-\frac{3}{2}>\frac{1}{2}$, we have
\begin{align*}
\left\|\kappa\left[\partial_x^\alpha,\frac{1}{\underline{H}+H}\right](\partial_xH)(\partial_x U)\right\|_{l^2(L^2_x)}
&\lesssim\kappa \bigg(\left\|\Lambda^{s-\frac{3}{2}}\left(\partial_x\frac{1}{\underline{H}+H}\right)\right\|_{l^\infty(L^2_x)}\n\Lambda^{s-1}((\partial_xH)(\partial_xU))\n_{l^2(L^2_x)}\\
&\hspace{0,5cm}+\n\Lambda^{s-\frac{3}{2}}\left((\partial_xH)(\partial_xU)\right)\n_{l^\infty(L^2_x)}
\left\|\Lambda^{s-1}\left(\partial_x\frac{1}{\underline{H}+H}\right)\right\|_{l^2(L^2_x)}
\bigg).
\end{align*}
Moreover one has for any $s_0>\frac{1}{2}$, $\sigma\geq0$ and any $i\in\{1,\cdots,N\}$, using classical product estimates and composition estimates in Sobolev spaces we have
\begin{align*}
\left|\partial_x\frac{1}{\underline{H}_i+H_i}\right|_{H^{\sigma}_x}&=\left|\frac{\partial_xH_i}{(\underline{H}_i+H_i)^2}\right|_{H^{\sigma}_x}\leq \left|\frac{\partial_xH_i}{\underline{H}_i^2}\right|_{H^{\sigma}_x}+\left|\left(\frac{1}{\underline{H}_i^2}-\frac{1}{(\underline{H}_i+H_i)^2}\right)\partial_x H_i\right|_{H^{\sigma}_x}\\
&\lesssim(h_*^{-1})^2\left|\partial_x H_i\right|_{H^{\sigma}_x}+\left|\left(\frac{1}{\underline{H}_i^2}-\frac{1}{(\underline{H}_i+H_i)^2}\right)\right|_{H^{s_0}_x}\left|\partial_x H_i\right|_{H^{\sigma}_x}\\&\hspace{4cm}+\left<\left|\left(\frac{1}{\underline{H}_i^2}-\frac{1}{(\underline{H}_i+H_i)^2}\right)\right|_{H^{\sigma}_x}\left|\partial_x H_i\right|_{H^{s_0}_x}\right>_{\sigma>s_0}\\
&\leq C\left(h_*^{-1},\n H_i\n_{H^{s_0}_x}\right)\n\partial_xH_i\n_{H^{\sigma}_x},
\end{align*}
and therefore   
\begin{align*}
\left\|\Lambda^{s-\frac{3}{2}}\left(\partial_x\frac{1}{\underline{H}+H}\right)\right\|_{l^\infty(L^2_x)}&\lesssim C\left(h_*^{-1},\n\Lambda^{s-\frac{3}{2}}H\n_{l^\infty(L^2_x)}\right)\n\Lambda^{s-\frac{3}{2}}\partial_xH\n_{l^\infty(L^2_x)},
\end{align*}
and
\begin{align*}
\left\|\Lambda^{s-1}\left(\partial_x\frac{1}{\underline{H}+H}\right)\right\|_{l^2(L^2_x)}&\lesssim C\left(h_*^{-1},\n\Lambda^{s-\frac{3}{2}}H\n_{l^\infty(L^2_x)}\right)\n\Lambda^{s-1}\partial_xH\n_{l^2(L^2_x)}.
\end{align*}
Using classical product estimates in Sobolev spaces we have
\begin{align*}
\n\Lambda^{s-\frac{3}{2}}\left((\partial_xH)(\partial_xU)\right)\n_{l^\infty(L^2_x)}&\lesssim \n\partial_x\Lambda^{s-\frac{3}{2}}H\n_{l^\infty(L^2_x)}\n\Lambda^{s-\frac{3}{2}} \partial_xU\n_{l^\infty(L^2_x)}\lesssim \n H\n_{H^{s,1}}\n U\n_{H^{s,1}}.
\end{align*}
By Lemma \ref{comnatu} ($(2)$ and $(4)$) and since $s-1>\frac{1}{2}+\frac{1}{2}$
\begin{align*}
\n\Lambda^{s-1}((\partial_xH)(\partial_x U))&\n_{l^2(L^2_x)} \lesssim \n H\n_{H^{s,1}}\n U\n_{H^{s,1}}.
\end{align*}
and 
\begin{align*}
&\left\|\kappa\frac{1}{\underline{H}+H}[\partial_x^\alpha,\partial_xH]\partial_xU\right\|_{l^2(L^2_x)}\lesssim \kappa h_*^{-1}\left(\n \partial_x^2 H\n_{H^{s-1,1}}\n \partial_xU\n_{H^{s-1,1}}\right)\\
&\hspace{5cm}\lesssim\kappa h_*^{-1}\left(\n \partial_x H\n_{H^{s,1}}\n U\n_{H^{s,1}}\right).
\end{align*}
Consequently, using Lemma \ref{A1},
\begin{align}\label{Rest4}
\n \textbf{R}_{\alpha,0}\n_{l^2(L^2_x)}\lesssim \n U\n_{H^{s,1}}^2+\kappa C(h_*^{-1},\n H\n_{H^{s-1,1}})\left( \n H\n^2_{H^{s,1}}+\n \partial_xH\n_{H^{s,1}}\right)\n U\n_{H^{s,1}}.
\end{align}
Estimates for $R_{\alpha,j}$ for $ j\in\left\{1,2\right\} $, with $0\leq\alpha\leq s-j$ .\\

\noindent For $j=1$ using the fact that $\Dp {\sf S}=\rd$, we have
$$R_{\alpha,1}=\rd\left(-[\partial_x^{\alpha+1},U] H-\partial_x^\alpha\left(( \underline{H})(\partial_xU)\right)\right).$$
Using Lemma \ref{comnatu} $(4)$ and since $s-1>\frac{1}{2}+1$ we have 
\begin{align*}
\n[\partial_x^{\alpha+1}, U] H\n_{l^2(L^2_x)}&\lesssim \n\partial_x U\n_{H^{s-1,1}}\n H\n_{H^{s-1,1}}\\
&\lesssim \n U\n_{H^{s,1}}\n H\n_{H^{s-1,1}}.
\end{align*}
and
\begin{align*}
\n\partial_x^\alpha\left((\underline{H})(\partial_xU)\right) \n_{l^2(L^2_x)}\leq \br\underline{H}\br_{l^\infty}\n U\n_{H^{s,0}}.
\end{align*}

\noindent For $j=2$, using Lemma \ref{IL}  and the fact that $\Ds {\sf S}=\rd\Dp $, we have
\begin{align*}
R_{\alpha,2}=&\rd\bigg(-\llbracket\partial_x^\alpha\Dp,\underline{U}+U\rrbracket\partial_xH-\partial_x^\alpha\Dp\left((\underline{H}+H)\partial_x U)\right)\bigg).
\end{align*}
Using Lemma \ref{comutgen} $(2)$, Lemma \ref{AlgebreBanach} (1) and since $s-2>\frac{1}{2}$, we have
\begin{align*}
\n\llbracket\partial_x^\alpha\Dp,U\rrbracket\partial_xH\n_{l^2(L^2_x)}&
\lesssim\n U\n_{H^{s,2}}\n H\n_{H^{s-1,1}},
\end{align*}
and
\begin{align*}
\n\partial_x^\alpha\Dp\left((H)(\partial_x U)\right)\n_{l^2(L^2_x)}&\lesssim\n  H\n_{H^{s-1,1}} \n  U\n_{H^{s,1}}.
\end{align*}
Moreover we have
\begin{align*}
\n\llbracket\partial_x^\alpha\Dp,\underline{U}\rrbracket\partial_xH\n_{l^2(L^2_x)}&\leq\br\Dp \underline{U}\br_{l^\infty}\n H\n_{H^{s-1,0}}, 
\end{align*}
and
\begin{align*}
\n\partial_x^\alpha\Dp\left((\underline{H})(\partial_x U)\right)\n_{l^2(L^2_x)}&\lesssim\br \underline{H}\br_{w^{1,\infty}}\n U\n_{H^{s,1}}.
\end{align*}
Hence 
\begin{align}\label{Rest5}
\n R_{\alpha,j}\n _{l^2(L^2_x)}\lesssim (\n H\n_{H^{s-1,1}}+\br\underline{H}\br_{w^{1,\infty}}+\br\Dp\underline{U}\br_{l^\infty})(\n U\n_{H^{s,2}}+\n H\n_{H^{s-1,0}}).
\end{align}
Estimates for $\textbf{R}_{\alpha,j}=-\left\llbracket\partial_x^\alpha \mathrm{D}_{\rho}^j,\underline{U}+U-\kappa\frac{\partial_x H}{\underline{H}+H}\right\rrbracket\partial_x U-\partial_x^\alpha\left( \mathrm{D}_{\rho}^j\left({\sf \Gamma}\partial_xH\right)\right)$, with $j\in\left\{1,2\right\}$ and $0\leq\alpha\leq s-j$. \\

\noindent By means of Lemma \ref{IL} and Lemma \ref{propD}, we easily infer  \begin{align*}
\left\|\frac{\partial
_xH}{\underline{H}}\right\|_{H^{s,2}}\leq C(h_*^{-1},\br\Dp\underline{H}\br_{l^\infty},\br\Ds\underline{H}\br_{l^\infty})\n\partial_xH\n_{H^{s,2}}.
\end{align*}
Since $s>\frac{3}{2}+\frac{1}{2}$, by Lemma \ref{comutgen} we have 
\begin{align*}
\n\llbracket\partial_x^\alpha\Dp^j,U\rrbracket\partial_x U\n_{l^2(L^2_x)}\lesssim \n U\n_{H^{s,2}}\n \partial_xU\n_{H^{s-1,2}}.
\end{align*}
By Lemma \ref{IL}, we have 
\begin{align*}
\n\llbracket\partial_x^\alpha\Dp^j,\underline{U}\rrbracket\partial_x U\n_{l^2(L^2_x)}
\lesssim\left(\br \Dp\underline{U}\br_{l^\infty}+\br \Ds\underline{U}\br_{l^\infty}\right)\n U\n_{H^{s,1}}.
\end{align*}
Moreover  using Lemma \ref{comutgen}, Lemma \ref{AlgebreBanach} $(2)$ and Lemma \ref{composHsk} it follows that

\begin{align*}
\kappa\bigg\|\left\llbracket\partial_x^\alpha\Dp^j,\frac{\partial_xH}{\underline{H}+H}\right\rrbracket\partial_x U\bigg\|_{l^2(L^2_x)}&\lesssim\kappa\left\|\frac{\partial
_xH}{\underline{H}+H}\right\|_{H^{s,2}}\n\partial_xU\n_{H^{s-1,2}}\\
&\lesssim\kappa \left\|\frac{\partial
_xH}{\underline{H}}\right\|_{H^{s,2}}\n U\n_{H^{s,2}}+\kappa\n U\n_{H^{s,2}}\n\partial_xH\n_{H^{s,2}}\left\|\frac{1}{\underline{H}+H}-\frac{1}{\underline{H}}\right\|_{H^{s-1,1}}\\
&\quad+\kappa\n U\n_{H^{s,2}}\n\partial_xH\n_{H^{s-1,1}}\left\|\frac{1}{\underline{H}+H}-\frac{1}{\underline{H}}\right\|_{H^{s,2}}\\
&\lesssim\kappa C(h_*^{-1},\br\Dp\underline{H}\br_{w^{1,\infty}})\n\partial_xH\n_{H^{s,2}}\n U\n_{H^{s,2}}\\
&\quad+\kappa\n U\n_{H^{s,2}}\n\partial_xH\n_{H^{s,2}}C(h^{-1}_*,\underline{M},M)\n H\n_{H^{s-1,1}}\\
&\quad+\kappa\n U\n_{H^{s,2}}\n H\n_{H^{s,2}}C(h^{-1}_*,\underline{M},M)\n H\n_{H^{s,2}}.
\end{align*}
Using \eqref{rhogammadecomp} we have
\begin{equation*}
\partial_x^\alpha\left(\Dp^j\left({\sf \Gamma}\partial_xH\right)\right)= \partial_x^\alpha\left(\Dp^j\left(\boldsymbol{\rho}_1\boldsymbol{\rho}^{-1}({\sf TS})^t\partial_x{\sf T}{\sf S}H\right)\right)+ \partial_x^\alpha\left(\Dp^j\left( \boldsymbol{\rho}^{-1}{\sf S}^t{\sf C}\partial_x{\sf S}H\right)\right),
\end{equation*}
consequently using Lemma \ref{IL} and the fact that $\Dp\boldsymbol{\rho}=-(1\cdots,1)^t$, $\Ds\boldsymbol{\rho}=(0,\cdots,0)^t$,\\ $\Dp({\sf T}{\sf S})^t({\sf T}{\sf S})=0$ and $\Dp{\sf S}=\rd$, $\Dp{\sf S}^t=-\ru$, one has 
\begin{align*}
\left\| \partial_x^\alpha\left(\Dp^j\left(\boldsymbol{\boldsymbol{\rho}}_1\boldsymbol{\rho}^{-1}({\sf TS})^t\partial_x{\sf T}{\sf S}H\right)\right)\right\|_{l^2(L^2_x)} &\lesssim  \boldsymbol{\rho}_1  C\left( \left|\boldsymbol{\rho}^{-1}\right|_{l^\infty}\right)\n\partial_x^{\alpha+1}{\sf T}{\sf S}H\n_{l^2(L^2_x)}\\
&\lesssim \boldsymbol{\rho}_1 C\left(\left|\boldsymbol{\rho}^{-1}\right|_{l^\infty}\right) \n {\sf T}{\sf S}H\n_{H^{s,0}}
\end{align*}
and
\begin{align*}
\left\| \partial_x^\alpha\left(\Dp^j\left( \boldsymbol{\rho}^{-1}{\sf S}^t{\sf C}\partial_x{\sf S}H\right)\right)\right\|_{l^2(L^2_x)} &\lesssim    C\left(  \left|\boldsymbol{\rho}^{-1}\right|_{l^\infty}\right)\n\partial_x^{\alpha+1}{\sf S}H\n_{l^2(L^2_x)}\\
&\lesssim   C\left( \left|\boldsymbol{\rho}^{-1}\right|_{l^\infty}\right) \n {\sf S}H\n_{H^{s,1}}.\\
\end{align*}
Hence 
\begin{align}\label{Rest6}
\n\mathbf{R}_{\alpha,j}\n_{l^2(L^2_x)}&\lesssim  C\left( \boldsymbol{\rho}_1 , \left|\boldsymbol{\rho}^{-1}\right|_{l^\infty}\right) (\n {\sf T}{\sf S}H\n_{H^{s,0}}+\n {\sf S}H\n_{H^{s,0}})+(\br\Dp\underline{U}\br_{w^{1,\infty}}+\n U\n_{H^{s,2}})\n U\n_{H^{s,2}}\nonumber\\
&\hspace{0,5cm}+\kappa C(h_*^{-1},\br\underline{H}\br_{w^{2,\infty}},\n H\n_{H^{s-1,1}})\left( \n H\n^2_{H^{s,2}}+\n \partial_xH\n_{H^{s,2}}\right)\n U\n_{H^{s,2}}.
\end{align}
Estimates for $r
_{\alpha,0}=-[\partial_x^{\alpha+1};U,H]+\left(\partial_x^\alpha U\right)(\partial_xH)$, $\textbf{r}_{\alpha,0}=-(\partial_x^\alpha  U)(H)-\partial_x^\alpha \left(\underline{H} U\right)$, with  $0\leq\alpha\leq s$.\\

Using Lemma \ref{comnatu} with Lemma \ref{A1} and since $s-\frac{3}{2}>\frac{1}{2}$ we have
\begin{align*}
\n[\partial_x^{\alpha+1};U,H]\n_{l^2(L^2_x)}\lesssim \n H\n_{H^{s,1}}\n U\n_{H^{s,1}},
\end{align*}
and
\begin{align*}
\n\left(\partial_x^\alpha U\right)(\partial_xH)\n_{l^2(L^2_x)}&\lesssim \n H\n_{H^{s,1}}\n U\n_{H^{s,0}},
\end{align*}
and
\begin{align*}
\n(\partial_x^\alpha  U)(H)\n_{l^2(L^2_x)}&\lesssim \n H\n_{H^{s-1,1}}\n U\n_{H^{s,0}}.
\end{align*}
Finally,
\begin{align*}
\n\partial_x^\alpha \left(\underline{H} U\right)\n_{l^2(L^2_x)}\lesssim \br\underline{H}\br_{l^\infty}\n U\n_{H^{s,0}}.
\end{align*}
Estimates for  $r_{\alpha,j},\, \textbf{r}_{\alpha,j}$ for $j\in\left\{1,2\right\}$ such that $0\leq\alpha\leq s-j$.\\

\noindent We have
\begin{align*}
&r_{\alpha,j}=-\llbracket\partial_x^{\alpha+1} {\sf D}_{\rho}^j;U,H\rrbracket+(\partial_x^\alpha {\sf D}_{\rho}^j U)({\sf M}^j\partial_xH),\\
& \textbf{r}_{\alpha,j}=-\llbracket{\sf D}_{\rho}^j,\underline{U}\rrbracket\partial_x^\alpha H -\partial_x^\alpha \left( {\sf D}_{\rho}^j(\underline{H} U)\right)-(\partial_x^\alpha  {\sf D}_{\rho}^j U)({\sf M}^jH).
\end{align*}
Using Lemma \ref{IL}, we have
\begin{align*}
\n\llbracket\Dp^j,\underline{U}\rrbracket\partial_x^\alpha H\n_{l^2(L^2_x)}&\lesssim \br\Dp\underline{U}\br_{l^\infty}\n H\n_{H^{s-1,1}}+\br\Ds\underline{U}\br_{l^\infty}\n H\n_{H^{s-2,0}},
\end{align*}
and
\begin{align*}
\n\partial_x^\alpha \left(\Dp^j(\underline{H} U)\right)\n_{l^2(L^2_x)}
&\lesssim  \br\underline{H}\br_{l^\infty}\n U\n_{H^{s,2}}+\br\Dp\underline{H}\br_{l^\infty}\n U\n_{H^{s-1,1}}+\br\Ds\underline{H}\br_{l^\infty}\n U\n_{H^{s-2,0}}.
\end{align*}
Moreover using Lemma \ref{A1} and since $s>\frac{3}{2}+\frac{1}{2}$ we have
\begin{align*}
\n(\partial_x^\alpha \Dp^j U)({\sf M}^jH)\n_{l^2(L^2_x)}&\lesssim
 \n H\n_{H^{s-1,1}}\n U\n_{H^{s,2}},
\end{align*}
and 
\begin{align*}
\n(\partial_x^\alpha\Dp^j U)({\sf M}^j\partial_xH)\n_{l^2(L^2_x)}&\lesssim \n
H\n_{H^{s,1}}\n U\n_{H^{s,2}}.
\end{align*}
Since $\alpha+1\leq s+1-j$ and $s+1\geq 2$, $  s+1> \frac{5}{2}+\frac{1}{2} $ then by Lemma \ref{comutgen} $(3)$.
\begin{align*}
\n\llbracket\partial_x^{\alpha+1}\Dp^j;U,H\rrbracket\n_{l^2(L^2_x)}\lesssim\n U\n_{H^{s,2}}\n H\n_{H^{s,2}}.
\end{align*}
Hence for $j\in\{0,1,2\}$ such that $0\leq\alpha\leq s-j$
\begin{align}\label{Rest7}
&\n r_{\alpha,j}\n_{l^2(L^2_x)}\lesssim \n H\n_{H^{s,2}}\n U\n_{H^{s,2}},\\
&\n \textbf{r}_{\alpha,j}\n_{l^2(L^2_x)}\lesssim (\br\underline{H}\br_{w^{2,\infty}}+\br\Dp\underline{U}\br_{w^{1,\infty}}+\n H\n_{H^{s-1,1}})(\n U\n_{H^{s,2}}+\n H\n_{H^{s-1,1}})\nonumber.
\end{align}
Collecting estimates $(\ref{Rest1})-(\ref{Rest7})$ the lemma is proved.
\end{proof}

\subsection{Energy estimates} \label{section4.2}
In this section we give some energy estimates of linear equations arising in Lemma \ref{restes}. We first recall some energy estimates of transport and transport diffusion equations:

\begin{equation}\label{transdiff}
\partial_t\dot{H}+\dot{U}\,\partial_x\dot{H}=\kappa\partial_x^2\dot{H}+R+\partial_x\textbf{R}.
\end{equation}

\begin{equation}\label{trans}
\partial_t\dot{H}+\dot{U}\,\partial_x\dot{H}=R.
\end{equation}

\begin{lemma}\label{esttransdiff,trans} ~\\

\begin{enumerate}
\item 

There exists a universal constant $C_0>0$ such that for any $N\in\mathbb{N}^*$, $\kappa>0$ and $T>0$, for any $\dot{U}\in L^\infty(0,T;l^\infty(L^\infty_x(\R)))$ with $\partial_x \dot{U}\in L^1(0,T;l^\infty(L^\infty_x(\R)))$, for any $(R,\textbf{R})\in L^2(0,T;l^2(L^2_x(\R)))$ and for any $\dot{H}\in C^0([0,T);l^2(L^2_x(\R)))$ with $\partial_x\dot{H}\in L^2(0,T;l^2(L^2_x(\R)))$ such that $(\ref{transdiff})$ holds in $L^2(0,T;(H^{1,0}(\R))')$ we have 

\begin{multline}\label{tdifest}
\n\dot{H}\n_{L^\infty(0,T;l^2(L^2_x))}+\kappa^\frac{1}{2} \n\partial_x\dot{H}\n_{L^2(0,T;l^2(L^2_x))}\\
\leq C_0\left( \n\dot{H}|_{t=0}\n_{l^2(L^2_x)}+\n R\n_{L^1(0,T;l^2(L^2_x))}+\kappa^{-\frac{1}{2}}\n\textbf{R}\n_{L^2(0,T;l^2(L^2_x))}\right)\\
\hspace{8cm}\exp\left(C_0\n\partial_x \dot{U}\n_{L^1(0,T;l^\infty(L^\infty_x))}\right).
\end{multline}

\item There exists a universal constant $C_0>0$ such that for any $N\in\N^*$, $T>0$, and any $\dot{H}\in L^\infty(0,T;l^2(L^2_x(\R)))$ solution of $(\ref{trans})$ with initial data $\dot{H}|_{t=0}\in l^2(L^2_x)$, with\\ $R\in L^1(0,T;l^2(L^2_x(\R)))$, we have 
\begin{multline}\label{trasest}
\n\dot{H}\n_{L^\infty(0,T;l^2(L^2_x))}\leq \left( \n\dot{H}|_{t=0}\n_{l^2(L^2_x)}+\n R\n_{L^1(0,T;l^2(L^2_x))}\right)\exp\left(C_0\n\partial_x \dot{U}\n_{L^1(0,T;l^\infty(L^\infty_x))}\right).
\end{multline}
\end{enumerate}

\end{lemma}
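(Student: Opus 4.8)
The plan is to run the classical $L^2$ energy method, exploiting that $\n \cdot \n_{l^2(L^2_x)}$ is, up to the normalizing weight $1/N$, the Euclidean sum over the $N$ components of the usual $L^2(\R)$ norm, so that all constants come out automatically independent of $N$; in effect this reduces the proof to the componentwise transport and transport--diffusion a priori estimates of \cite[Ch.~3]{BCD} summed with the weight $1/N$. To generate the exponential factor cleanly I would, in both cases, first perform the substitution $\dot{H}(t,\cdot)=e^{W(t)/2}\,\widetilde H(t,\cdot)$ with the nondecreasing function $W(t):=\int_0^t\n\partial_x\dot{U}(s,\cdot)\n_{l^\infty(L^\infty_x)}\,ds$; then $\widetilde H$ solves the same equation with an extra damping term $-\tfrac12 W'(t)\widetilde H$ on the right-hand side and with source terms $e^{-W/2}R$ and $e^{-W/2}\partial_x\textbf{R}$.

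For \eqref{transdiff}, I would then pair the equation for $\widetilde H$ with $\widetilde H$ in $\langle\cdot,\cdot\rangle_{L^2_xl^2}$. Integrating by parts in $x$, the transport term contributes $-\langle\dot{U}\partial_x\widetilde H,\widetilde H\rangle_{L^2_xl^2}=\tfrac12\langle(\partial_x\dot{U})\widetilde H,\widetilde H\rangle_{L^2_xl^2}\le\tfrac12 W'(t)\n\widetilde H\n_{l^2(L^2_x)}^2$ to the right-hand side, which is exactly cancelled by the damping term; the diffusion term contributes $-\kappa\n\partial_x\widetilde H\n_{l^2(L^2_x)}^2$; the source $e^{-W/2}R$ is bounded by $\n R\n_{l^2(L^2_x)}\n\widetilde H\n_{l^2(L^2_x)}$ since $e^{-W/2}\le1$; and $e^{-W/2}\partial_x\textbf{R}$, after integration by parts, is handled with Young's inequality, $|\langle e^{-W/2}\textbf{R},\partial_x\widetilde H\rangle_{L^2_xl^2}|\le\tfrac{\kappa}{2}\n\partial_x\widetilde H\n_{l^2(L^2_x)}^2+\tfrac{1}{2\kappa}\n\textbf{R}\n_{l^2(L^2_x)}^2$, the first term being absorbed by the diffusion term. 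This yields
\[
\tfrac12\tfrac{d}{dt}\n\widetilde H\n_{l^2(L^2_x)}^2+\tfrac{\kappa}{2}\n\partial_x\widetilde H\n_{l^2(L^2_x)}^2\ \le\ \n R\n_{l^2(L^2_x)}\n\widetilde H\n_{l^2(L^2_x)}+\tfrac{1}{2\kappa}\n\textbf{R}\n_{l^2(L^2_x)}^2.
\]
Integrating in time, using $\int_0^t\n R\n_{l^2(L^2_x)}\n\widetilde H\n_{l^2(L^2_x)}\le\big(\sup_{[0,T]}\n\widetilde H\n_{l^2(L^2_x)}\big)\n R\n_{L^1(0,T;l^2(L^2_x))}$, and taking the supremum over $t\in[0,T]$, I get a quadratic inequality for $\mathcal M:=\sup_{[0,T]}\n\widetilde H(t)\n_{l^2(L^2_x)}$, namely $\mathcal M^2\le\n\dot{H}|_{t=0}\n_{l^2(L^2_x)}^2+2\,\mathcal M\,\n R\n_{L^1(0,T;l^2(L^2_x))}+\kappa^{-1}\n\textbf{R}\n_{L^2(0,T;l^2(L^2_x))}^2$, hence $\mathcal M\lesssim\n\dot{H}|_{t=0}\n_{l^2(L^2_x)}+\n R\n_{L^1(0,T;l^2(L^2_x))}+\kappa^{-1/2}\n\textbf{R}\n_{L^2(0,T;l^2(L^2_x))}$; reinserting this bound into the time-integrated inequality controls $\kappa^{1/2}\n\partial_x\widetilde H\n_{L^2(0,T;l^2(L^2_x))}$ by the same right-hand side. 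Undoing the substitution gives $\n\dot{H}(t)\n_{l^2(L^2_x)}=e^{W(t)/2}\n\widetilde H(t)\n_{l^2(L^2_x)}$ (and similarly for $\partial_x\dot{H}$), and bounding $e^{W(t)/2}\le e^{W(T)/2}=\exp\!\big(\tfrac12\n\partial_x\dot{U}\n_{L^1(0,T;l^\infty(L^\infty_x))}\big)$ delivers \eqref{tdifest} with a universal constant $C_0$. For the pure transport equation \eqref{trans}, the same computation with $\kappa=0$ and $\textbf{R}=0$ gives $\tfrac{d}{dt}\n\widetilde H\n_{l^2(L^2_x)}\le\n R\n_{l^2(L^2_x)}$, hence $\sup_{[0,T]}\n\widetilde H\n_{l^2(L^2_x)}\le\n\dot{H}|_{t=0}\n_{l^2(L^2_x)}+\n R\n_{L^1(0,T;l^2(L^2_x))}$, and undoing the substitution gives \eqref{trasest}.

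The computations above are entirely standard; the one delicate point --- and the step I would treat most carefully --- is the rigorous justification of the energy identity under the weak assumptions on $\dot{H}$ (namely $\dot{H}\in C^0([0,T);l^2(L^2_x))$ with $\partial_x\dot{H}\in L^2(0,T;l^2(L^2_x))$, and \eqref{transdiff} holding merely in $L^2(0,T;(H^{1,0}(\R))')$). Here one mollifies $\dot{H}$ in time, tests the equation against $\dot{H}$ itself, passes to the limit, and uses that $\langle\partial_t\dot{H},\dot{H}\rangle_{(H^{1,0})',H^{1,0}}=\tfrac12\tfrac{d}{dt}\n\dot{H}\n_{l^2(L^2_x)}^2$ in $\mathcal D'((0,T))$; this is exactly the content of the transport and transport--diffusion estimates in \cite[Ch.~3]{BCD}, which in particular also legitimizes the change of unknown $\dot{H}\mapsto\widetilde H$ at this level of regularity.
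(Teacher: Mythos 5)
Your proof is correct and is essentially the paper's argument: the paper simply applies the scalar transport and transport--diffusion estimates of \cite[Ch.~3]{BCD} to each component $\dot H_i$ (noting that the exponential factor $\exp(C_0\n\partial_x\dot U\n_{L^1(0,T;l^\infty(L^\infty_x))})$ is the same for every $i$) and then takes the normalized $l^2$-norm in $i$, whereas you unfold that scalar proof into a direct energy estimate on the vector system via the weight $e^{-W/2}$. Both routes hinge on the same two points you identify --- the uniformity in $i$ of the velocity bound, which makes all constants $N$-independent, and the mollification argument justifying the energy identity at the stated low regularity --- so there is nothing to add.
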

\begin{proof}

Let $i\in\{1,\cdots,N\}$ fixed, then $\dot{H}_i$ satisfies the following equation 

\begin{equation*}
\partial_t\dot{H}_i+\dot{U}_i\;\partial_x\dot{H}_i=\kappa\partial_x^2\dot{H}_i+R_i+\partial_x\textbf{R}_i.
\end{equation*}

It is standard (see Chapter $3$ in \cite{BCD}) that we have the estimate
\begin{multline*}
\n\dot{H}_i\n_{L^{\infty}(0,T;L^2_x)}+\kappa^\frac{1}{2} \n\partial_x\dot{H}_i\n_{L^2(0,T;L^2_x)}\\
\leq C_0\left( \n\dot{H}_i|_{t=0}\n_{L^2_x}+\n R_i\n_{L^1(0,T;L^2_x)}+\kappa^{-\frac{1}{2}}\n\textbf{R}_i\n_{L^2(0,T;L^2_x)}\right)\\
\hspace{8cm}\exp\left(C_0\n\partial_x \dot{U}\n_{L^1(0,T:l^\infty(L^\infty_x))}\right).
\end{multline*}

Taking the $l^2$-norm of this estimate and using triangular inequality  we infer \eqref{tdifest}.

In the same way \eqref{trasest}) is obtained  using standard energy estimates for the transport equation (see Th.~3.14 in \cite{BCD}).

\end{proof}

We now consider system 
\begin{align}\label{quasipt}
&\partial_t\dot{H}+(\underline{U}+U)\partial_x\dot{H}
+(\underline{H}+H)\partial_x\dot{U}=\kappa\partial_x^2\dot{H}+R,\\&
\partial_t \dot{U}+{\sf \Gamma}\partial_x\dot{H}+\left(\underline{U}+U-\kappa\frac{\partial_xH}{\underline{H}+H}\right)\partial_x\dot{U}=\textbf{R},\nonumber.
\end{align}
where we recall that ${\sf \Gamma}$ satisfies the decomposition \eqref{rhogammadecomp}.
\begin{lemma}\label{energy}
Let $h_*,M,\boldsymbol{\rho}_1>0$ .  There exists $C=C(M,\boldsymbol{\rho}_1,h_*)>0$ $($independent of $N$$)$ such that for all $\kappa>0$, $N\in\N^*$ and any 
$\underline{H},\underline{U}\in \R^N$  and any $T>0$ and $(H,U)$ solution to the system $(\ref{multicouches})$ on $[0,T]$ with $(H, U)\in C([0, T];H^{s,2}(\R)^{2})$, $H\in L^2(0,T;H^{s+1,2}(\R)) $, satisfying for almost every $t\in[0,T]$ the upper bound 
\begin{equation*}
\n\partial_xH(t,\cdot)\n_{L^\infty_x(l^2)}+\n\partial_xU(t,\cdot)\n_{l^\infty(L^\infty_x)}
\leq M,
\end{equation*}
and the lower bound
\begin{equation}\label{noncavitation}
\underset{(i,x)\in\{1,\cdots,N\}\times\R}{\inf}\underline{H}_i+H_i(t,x)\geq h_*,
\end{equation}
and for any $(\dot{H},\dot{U})\in C([0, T];l^2(L^2(\R)^{2})$ with $\dot{H}\in L^2(0,T;l^2(H^1(\R)))$ satisfying \eqref{quasipt}   with $R, \textbf{R}\in L^2(0,T;l^2(L^2(\R)))$, the following estimate holds.
\begin{align*}
\mathcal{E}^\frac{1}{2}(\dot{H},\dot{U})(t)+\frac{\kappa^\frac{1}{2}}{2}&\n\partial_x{\sf C}{\sf S}\dot{H}\n_{L^2(0,t;l^2(L^2_x))}+\left(\boldsymbol{\rho}_1\kappa\right)^\frac{1}{2}\n\partial_x{\sf T}{\sf S}\dot{H}\n_{L^2(0,t;l^2(L^2_x))}\\&\leq \left(\mathcal{E}^\frac{1}{2}(\dot{H},\dot{U})(0)+C\int\limits_{0}^t\mathcal{E}^\frac{1}{2}(R,\textbf{R})(\tau)d\tau\right)\\
&\hspace{2cm}\times \exp\left(C\int\limits_0^t\left(1+\kappa^{-1}\n \Dp( \underline{U}+U)(\tau,\cdot)\n_{L^\infty_x(l^2)}^2\right)d\tau\right),
\end{align*}
where we denote 
\begin{equation*}
\mathcal{E}(\dot{H},\dot{U})(t):=\frac{1}{2}\n {\sf C}{\sf S}\dot{H}\n_{l^2(L^2_x)}^2+ \frac{\boldsymbol{\rho}_1}{2}\n {\sf T}{\sf S}\dot{H}\n_{l^2(L^2_x)}^2+\frac{1}{2}\int_{\R}\left<\dot{U},\boldsymbol{\rho} (\underline{H}+H)\dot{U}\right>_{l^2}.
\end{equation*}

\end{lemma}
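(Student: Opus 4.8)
The plan is to run an energy estimate on the system \eqref{quasipt} using the functional $\mathcal E(\dot H,\dot U)$, whose design is dictated by the decomposition \eqref{rhogammadecomp}: $\boldsymbol{\rho}{\sf\Gamma}=\boldsymbol{\rho}_1({\sf TS})^t{\sf TS}+{\sf S}^t{\sf C}{\sf S}$. First I would apply ${\sf S}$ to the first equation of \eqref{quasipt} and test it against $\boldsymbol{\rho}_1({\sf TS})^t{\sf TS}\dot H+{\sf S}^t{\sf C}{\sf S}\dot H$ (in the $\langle\cdot,\cdot\rangle_{L^2_xl^2}$ pairing, using that ${\sf C}={\sf Id}-{\sf P}$ and ${\sf C}={\sf C}^t={\sf C}^2$, ${\sf P}={\sf P}^t={\sf P}^2$, so that ${\sf S}^t{\sf C}{\sf S}$ and $({\sf TS})^t{\sf TS}$ are symmetric positive semidefinite); simultaneously I would test the second equation of \eqref{quasipt} against $\boldsymbol{\rho}(\underline H+H)\dot U$. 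The key algebraic point is that the cross terms coming from ${\sf\Gamma}\partial_x\dot H$ in the $\dot U$-equation and from $(\underline H+H)\partial_x\dot U$ in the $\dot H$-equation cancel against each other up to terms controllable by the energy: this is exactly the ``partial symmetric structure'' alluded to in the introduction, and it is the discrete mirror of the Bianchini--Duch\^ene computation. Concretely, $\langle {\sf S}((\underline H+H)\partial_x\dot U),\boldsymbol{\rho}{\sf S}\dot H\rangle$ should match $-\langle \boldsymbol{\rho}{\sf\Gamma}\partial_x\dot H,(\underline H+H)\dot U\rangle$ after using Abel summation (Lemma \ref{IL}(1)) to commute ${\sf S}$ past products and integrating by parts in $x$, modulo commutators between ${\sf S}$ and the multiplication by $\underline H+H$, which produce $\Dp(\underline H+H)$ factors — these are precisely the terms responsible for the $\kappa^{-1}\n\Dp(\underline U+U)\n_{L^\infty_x(l^2)}^2$ in the exponential, after being absorbed using the parabolic gain $\kappa^{1/2}\n\partial_x{\sf CS}\dot H\n_{L^2}$ via Young's inequality.

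Next I would collect the diffusive contributions. Applying ${\sf CS}$ and ${\sf TS}$ to $\kappa\partial_x^2\dot H$ and integrating by parts yields $-\kappa\n\partial_x{\sf CS}\dot H\n_{l^2(L^2_x)}^2$ and $-\kappa\boldsymbol{\rho}_1\n\partial_x{\sf TS}\dot H\n_{l^2(L^2_x)}^2$ (up to boundary commutators with the transport coefficient), which give the two good terms on the left-hand side of the claimed inequality. The transport terms $(\underline U+U)\partial_x\dot H$ and $(\underline U+U-\kappa\frac{\partial_xH}{\underline H+H})\partial_x\dot U$, once tested against the respective weights, integrate by parts to produce $\partial_x(\underline U+U)$ and $\partial_x\dot U\cdot\partial_x$(weight) type terms: the ones without $\kappa$ are bounded by $M\,\mathcal E$, while the $\kappa$-weighted one involving $\partial_x\frac{\partial_xH}{\underline H+H}$ is controlled using $\n\partial_xH\n_{L^\infty_x(l^2)}\le M$, the lower bound \eqref{noncavitation}, and Young against the parabolic term. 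The $\frac12\int\langle\dot U,\boldsymbol{\rho}(\partial_tH+\dots)\dot U\rangle$ piece arising from differentiating the time-dependent weight $\boldsymbol{\rho}(\underline H+H)$ in $\mathcal E$ is handled by substituting the first equation of \eqref{multicouches} for $\partial_tH$ and using the $L^\infty$ bounds on $\partial_xH,\partial_xU$ together with $\kappa^{1/2}\n\partial_x\dot H\n$ estimates; this again feeds into the $1+\kappa^{-1}(\cdots)$ factor.

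The source terms $R,\textbf R$ are paired against the same weights; since the weights are bounded in $L^\infty$ (using $\n\underline H+H\n_{L^\infty}\lesssim M$, $\br\boldsymbol{\rho}\br_{l^\infty}\lesssim 1$) and ${\sf CS}$, ${\sf TS}$, ${\sf S}$ are bounded on $l^2$ (Lemma \ref{contop}), these contribute $C\,\mathcal E^{1/2}(\dot H,\dot U)\,\mathcal E^{1/2}(R,\textbf R)$ — note $\mathcal E(R,\textbf R)$ makes sense with the same formula. After assembling, one arrives at a differential inequality of the form $\frac{d}{dt}\mathcal E+\kappa(\text{good terms})\le C(1+\kappa^{-1}\n\Dp(\underline U+U)\n_{L^\infty_x(l^2)}^2)\mathcal E+C\mathcal E^{1/2}\mathcal E^{1/2}(R,\textbf R)$, and a standard Gr\"onwall argument (integrating, taking square roots, using $\sqrt{a+b}\le\sqrt a+\sqrt b$ and $\int\mathcal E^{1/2}(R,\textbf R)$) gives the stated estimate, with the $\kappa^{1/2}$-prefactors $\tfrac12$ and $(\boldsymbol{\rho}_1\kappa)^{1/2}$ on the left emerging from the $\kappa$ coefficients of the good terms. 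I expect the main obstacle to be the careful bookkeeping of the commutator terms between ${\sf S}$ (and ${\sf TS}$) and multiplication by $\underline H+H$ in the cross-cancellation step: one must show they are genuinely lower order (only one $x$-derivative hitting $\dot H$ or $\dot U$, the other factor being $\Dp$ of the coefficients, which is $O(M)$ by hypothesis) so that they can be absorbed either by $\mathcal E$ or by the parabolic smoothing with the correct $\kappa$-weights — getting the $\kappa^{-1}$ (and not $\kappa^{-2}$ or worse) dependence is the delicate part and is exactly what makes the final time of existence uniform in $N$.
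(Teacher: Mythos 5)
Your overall strategy coincides with the paper's: the same energy functional, the same use of the decomposition \eqref{rhogammadecomp} to symmetrize (testing the $\dot H$-equation against ${\sf S}^t{\sf C}{\sf S}\dot H+\boldsymbol{\rho}_1({\sf TS})^t{\sf TS}\dot H$, equivalently applying ${\sf CS}$ and ${\sf TS}$ and testing against ${\sf CS}\dot H$ and $\boldsymbol{\rho}_1{\sf TS}\dot H$, and testing the $\dot U$-equation against $\boldsymbol{\rho}(\underline H+H)\dot U$), the same absorption of commutators by the parabolic gain via Young's inequality, and Gr\"onwall at the end.

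There is, however, one concrete misstep in your description of the cross-cancellation, and as written that step would not close. You propose to match $\big<{\sf S}((\underline H+H)\partial_x\dot U),\boldsymbol{\rho}{\sf S}\dot H\big>$ with $-\big<\boldsymbol{\rho}{\sf\Gamma}\partial_x\dot H,(\underline H+H)\dot U\big>$ by Abel-summing ${\sf S}$ past the product, and you claim the resulting $\Dp(\underline H+H)$ commutators are the source of the $\kappa^{-1}\n\Dp(\underline U+U)\n_{L^\infty_x(l^2)}^2$ factor. Neither is right. First, the cancellation requires no Abel summation at all: writing $\boldsymbol{\rho}{\sf\Gamma}={\sf S}^t{\sf CS}+\boldsymbol{\rho}_1({\sf TS})^t{\sf TS}$, the pressure contribution becomes $\big<{\sf CS}\partial_x\dot H,{\sf CS}((\underline H+H)\dot U)\big>+\boldsymbol{\rho}_1\big<{\sf TS}\partial_x\dot H,{\sf TS}((\underline H+H)\dot U)\big>$, which after a single integration by parts in $x$ (using that ${\sf CS}$, ${\sf TS}$ are constant matrices commuting with $\partial_x$) cancels the corresponding terms $\big<{\sf CS}((\underline H+H)\partial_x\dot U),{\sf CS}\dot H\big>+\cdots$ exactly, leaving only $\big<{\sf CS}((\partial_xH)\dot U),{\sf CS}\dot H\big>$-type remainders controlled by $\n\partial_xH\n_{L^\infty_x(l^2)}\le M$. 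Second, had you actually Abel-commuted ${\sf S}$ past $(\underline H+H)$ there, you would generate terms of the form ${\sf S}_0\big((\Dp(\underline H+H))(\ru\partial_x{\sf S}\dot U)\big)$: these involve $\Dp\underline H$ and $\Dp H$, which are \emph{not} controlled by the hypotheses of the lemma (only $\partial_xH$, $\partial_xU$ and the non-cavitation bound are assumed), and they carry a derivative of $\dot U$, for which there is no parabolic smoothing to absorb against. The $\kappa^{-1}\n\Dp(\underline U+U)\n_{L^\infty_x(l^2)}^2$ factor in the exponential in fact originates elsewhere, namely from Abel-commuting the \emph{transport} term: ${\sf S}((\underline U+U)\partial_x\dot H)=(\underline U+U)\partial_x{\sf S}\dot H-{\sf S}_0\big((\Dp(\underline U+U))(\ru\partial_x{\sf S}\dot H)\big)$, whose commutator is paired with ${\sf CS}\dot H$ (resp. ${\sf TS}\dot H$) and absorbed by $\frac{\kappa}{2}\n\partial_x{\sf CS}\dot H\n_{l^2(L^2_x)}^2$ via Young. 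With the cross terms handled as above, the rest of your plan (treatment of the diffusive, transport and source terms, and the final Gr\"onwall argument) goes through as in the paper.
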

\begin{proof}
In this proof we consider $(\dot{H},\dot{U})$ to be a sufficiently regular solution to the system \eqref{quasipt} so that the following computations and integration by parts hold true. The case of $(\dot{H},\dot{U})$ with the regularity mentioned in the Lemma is then done by the classical argument of regularization and passing to the limit.\\

By applying ${\sf C}{\sf S}$ to $(\ref{quasipt})_1$ and then testing it against ${\sf C}{\sf S}\dot{H}$, and using Abel's summation of Lemma \ref{IL} we obtain
\begin{align*}
\frac{1}{2}\frac{d}{dt}\n {\sf C}{\sf S}\dot{H}\n_{l^2(L^2_x)}^2&+\big<(\underline{U}+U)\partial_x{\sf C}{\sf S}\dot{H}, {\sf C}{\sf S}\dot{H}\big>_{L^2_xl^2}-\big<{\sf C}{\sf S}_0\left((\Dp(\underline{U}+U))(\ru(\partial_x{\sf S}\dot{H})\right), {\sf C}{\sf S}\dot{H}\big>_{L^2_xl^2}\\
&+\big<{\sf C}{\sf S}\left((\underline{H}+H)\partial_x\dot{U}\right), {\sf C}{\sf S}\dot{H}\big>_{L^2_xl^2}=-\kappa\n\partial_x{\sf C}{\sf S}\dot{H}\n_{l^2(L^2_x)}^2+\big<{\sf C}{\sf S} R, {\sf C}{\sf S}\dot{H}\big>_{L^2_xl^2}.
\end{align*}

In the same way by applying ${\sf T}{\sf S}$ to $(\ref{quasipt})_1$ and then testing it against $\boldsymbol{\rho}_1{\sf T}{\sf S}\dot{H}$  and using Abel's summation of Lemma \ref{IL} we obtain
\begin{align*}
\frac{\boldsymbol{\rho}_1}{2}\frac{d}{dt}\n {\sf T}{\sf S}\dot{H}&\n_{l^2(L^2_x)}^2+\boldsymbol{\rho}_1\kappa\n\partial_x{\sf T}{\sf S}\dot{H}\n_{l^2(L^2_x)}^2=-\boldsymbol{\rho}_1\big<{\sf T}\left((\underline{U}+U)\partial_x{\sf S}\dot{H}\right), {\sf T}{\sf S}\dot{H}\big>_{L^2_xl^2}+\boldsymbol{\rho}_1\big<{\sf T}{\sf S} R, {\sf T}{\sf S}\dot{H}\big>_{L^2_xl^2}\\
&+\boldsymbol{\rho}_1\big<{\sf T}{\sf S}_0\left((\Dp(\underline{U}+U))(\ru\partial_x{\sf S}\dot{H})\right), {\sf T}{\sf S}\dot{H}\big>_{L^2_xl^2}-\boldsymbol{\rho}_1\big<{\sf T}{\sf S}\left((\underline{H}+H)\partial_x\dot{U}\right), {\sf T}{\sf S}\dot{H}\big>_{L^2_xl^2}.
\end{align*}

By testing $(\ref{quasipt})_2$  against $\boldsymbol{\rho} (\underline{H}+H)\dot{U}$ and using the identity \eqref{rhogammadecomp} and the fact that $(H, U)$ satisfies \eqref{multicouches} we have
\begin{align*}
&\frac{1}{2}\frac{d}{dt}\left(\big<\dot{U},\boldsymbol{\rho} (\underline{H}+H)\dot{U}\big>_{L^2_xl^2}\right)+\boldsymbol{\rho}_1\big<{\sf T}{\sf S} \partial_x\dot{H}, {\sf T}{\sf S}\left((\underline{H}+H)\dot{U}\right)\big>_{L^2_xl^2}+\big<{\sf C}{\sf S} \partial_x\dot{H}, {\sf C}{\sf S}\left((\underline{H}+H)\dot{U}\right)\big>_{L^2_xl^2}\\
&\hspace{7cm}=\big<\textbf{R},\boldsymbol{\rho}(\underline{H}+H)\dot{U}\big>_{L^2_xl^2}.
\end{align*}
Collecting the above, $\mathcal{E}(\dot{H},\dot{U})$ satisfies the following  differential equation:
\begin{align}
&\frac{d}{dt}\mathcal{E}(\dot{H},\dot{U})(t)+\kappa\n\partial_x{\sf C}{\sf S}\dot{H}\n_{l^2(L^2_x)}^2+\boldsymbol{\rho}_1\kappa\n\partial_x{\sf T}{\sf S}\dot{H}\n_{l^2(L^2_x)}^2 \nonumber\\
&=-\big<(\underline{U}+U)\partial_x{\sf C}{\sf S}\dot{H}, {\sf C}{\sf S}\dot{H}\big>_{L^2_xl^2}-\boldsymbol{\rho}_1\big<{\sf T}\left((\underline{U}+U)\partial_x{\sf S}\dot{H}\right), {\sf T}{\sf S}\dot{H}\big>_{L^2_xl^2}\label{1}\\
&+\big<{\sf C}{\sf S}_0\left((\Dp(\underline{U}+U))(\ru(\partial_x{\sf S}\dot{H})\right), {\sf C}{\sf S}\dot{H}\big>_{L^2_xl^2}+\boldsymbol{\rho}_1\big<{\sf T}{\sf S}_0\left((\Dp(\underline{U}+U))(\ru\partial_x{\sf S}\dot{H})\right), {\sf T}{\sf S}\dot{H}\big>_{L^2_xl^2}\label{2}\\
&-\boldsymbol{\rho}_1\big<{\sf T}{\sf S} \partial_x\dot{H}, {\sf T}{\sf S}\left((\underline{H}+H)\dot{U}\right)\big>_{L^2_xl^2}-\big<{\sf C}{\sf S} \partial_x\dot{H}, {\sf C}{\sf S}\left((\underline{H}+H)\dot{U}\right)\big>_{L^2_xl^2}\label{3}\\
&-\boldsymbol{\rho}_1\big<{\sf T}{\sf S}\left((\underline{H}+H)\partial_x\dot{U}\right), {\sf T}{\sf S}\dot{H}\big>_{L^2_xl^2}-\big<{\sf C}{\sf S}\left((\underline{H}+H)\partial_x\dot{U}\right), {\sf C}{\sf S}\dot{H}\big>_{L^2_xl^2}\label{4}\\
&+\big<{\sf C}{\sf S} R, {\sf C}{\sf S}\dot{H}\big>_{L^2xl^2}+\big<\textbf{R},\boldsymbol{\rho}(\underline{H}+H)\dot{U}\big>_{L^2_xl^2}+\boldsymbol{\rho}_1\big<{\sf T}{\sf S} R, {\sf T}{\sf S}\dot{H}\big>_{L^2_xl^2}.\label{5}
\end{align}
Before estimating all the terms in the previous equality we notice that 
\begin{equation*}
\frac{1}{2}\n{\sf C} {\sf S}\dot{H}\n_{l^2(L^2_x)}^2+\frac{\boldsymbol{\rho}_1}{2}\n {\sf T}{\sf S}\dot{H}\n_{l^2(L^2_x)}^2+\frac{\boldsymbol{\rho}_1h_*}{2}\n\dot{U}\n_{l^2(L^2_x)}^2\leq \mathcal{E}(\dot{H},\dot{U})(t).
\end{equation*}
Hence we can now begin the estimating process to establish a differential inequality.\\

We estimate the terms of $(\ref{2})$ using Lemma \ref{contop} and Cauchy Schwarz inequality.
\begin{align*}
\bigg|\big<{\sf C}{\sf S}_0\left((\Dp(\underline{U}+U))(\ru(\partial_x{\sf S}\dot{H})\right)&, {\sf C}{\sf S}\dot{H}\big>_{L^2_xl^2}\bigg|\leq\int_{\R}\br (\Dp(\underline{U}+U))(\ru(\partial_x{\sf C}{\sf S}\dot{H})\br_{l^1}\br {\sf C}{\sf S}\dot{H}\br_{l^2}\\
&\leq \n \Dp(\underline{U}+U)\n_{L^\infty_x(l^2)}\n\partial_x{\sf C}{\sf S}\dot{H}\n_{l^2(L^2_x)}\n {\sf C}{\sf S}\dot{H}\n_{l^2(L^2_x)}\\
&\leq\frac{\kappa}{2}\n\partial_x{\sf C}{\sf S}\dot{H}\n_{l^2(L^2_x)}^2+\kappa^{-1} \n \Dp(\underline{U}+U)\n_{L^\infty_x(l^2)}^2\mathcal{E}(\dot{H},\dot{U})(t).
\end{align*} 
In the same way we have 
\begin{align*}
\bigg|\boldsymbol{\rho}_1\big<{\sf T}{\sf S}_0\left((\Dp(\underline{U}+U))(\ru\partial_x{\sf S}\dot{H})\right), &{\sf T}{\sf S}\dot{H}\big>_{L^2_xl^2}\bigg|
\leq\frac{\kappa}{4}\n\partial_x{\sf C}{\sf S}\dot{H}\n_{l^2(L^2_x)}^2\\
&\hspace{2,5cm}+2\kappa^{-1}\boldsymbol{\rho}_1 \n \Dp(\underline{U}+U)\n_{L^\infty_x(l^2)}^2\mathcal{E}(\dot{H},\dot{U})(t).
\end{align*}
We estimate now the terms in \eqref{1}. Using integration by parts, we have
\begin{align*}
-\big<(\underline{U}+U)\partial_x{\sf C}{\sf S}\dot{H}, {\sf C}{\sf S}\dot{H}\big>_{L^2_xl^2}=\frac{1}{2} \big<(\partial_xU)({\sf C}{\sf S}\dot{H}), {\sf C}{\sf S}\dot{H}\big>_{L^2_xl^2},
\end{align*}
 and consequently
\begin{align*}
\bigg| -\big<(\underline{U}+U)\partial_x{\sf C}{\sf S}\dot{H}, {\sf C}{\sf S}\dot{H}\big>_{L^2_xl^2} \bigg|&\leq\frac{1}{2}\n\partial_x U\n_{l^\infty(L^\infty_x)}\n{\sf C} {\sf S}\dot{H}\n_{l^2(L^2_x)}^2\leq\n\partial_x U\n_{l^\infty(L^\infty_x)} \mathcal{E}(\dot{H},\dot{U})(t).
\end{align*}
In the same way
\begin{align*}
\bigg|-\boldsymbol{\rho}_1\big<{\sf T}\left((\underline{U}+U)\partial_x{\sf S}\dot{H}\right), {\sf T}{\sf S}\dot{H}\big>_{L^2_xl^2}\bigg|\leq \frac{\boldsymbol{\rho}_1}{2} \n\partial_x U\n_{l^\infty(L^\infty_x)}\n {\sf T}{\sf S}\dot{H}\n_{l^2(L^2_x)}^2\leq\n\partial_x U\n_{l^\infty(L^\infty_x)} \mathcal{E}(\dot{H},\dot{U})(t) 
\end{align*}

Let us now consider the terms \eqref{3} and \eqref{4}. Integrating by parts it follows that 
\begin{align*}
-\big<{\sf C}{\sf S}\left((\underline{H}+H)\partial_x\dot{U}\right), {\sf C}{\sf S}\dot{H}\big>_{L^2_xl^2}&= \big<{\sf C}{\sf S}\left((\partial_xH)(\dot{U})\right), {\sf C}{\sf S}\dot{H}\big>_{L^2_xl^2}+\big<{\sf C}{\sf S}\left((\underline{H}+H)\dot{U}\right), \partial_x{\sf C}{\sf S}\dot{H}\big>_{L^2_xl^2}
\end{align*}
and
\begin{align*}
-\boldsymbol{\rho}_1\big<{\sf T}{\sf S}\left((\underline{H}+H)\partial_x\dot{U}\right), {\sf T}{\sf S}\dot{H}\big>_{L^2_xl^2}&=\boldsymbol{\rho}_1\big<{\sf T}{\sf S}\left((\partial_xH)(\dot{U})\right), {\sf T}{\sf S}\dot{H}\big>_{L^2_xl^2}\\
&\hspace{3cm}+\boldsymbol{\rho}_1\big<{\sf T}{\sf S}\left((\underline{H}+H)\dot{U}\right), \partial_x{\sf T}{\sf S}\dot{H}\big>_{L^2_xl^2}.
\end{align*}
Hence,
\begin{align*}
\eqref{3}+\eqref{4}= \big<{\sf C}{\sf S}\left((\partial_xH)(\dot{U})\right), {\sf C}{\sf S}\dot{H}\big>_{L^2_xl^2}+\boldsymbol{\rho}_1\big<{\sf T}{\sf S}\left((\partial_xH)(\dot{U})\right), {\sf T}{\sf S}\dot{H}\big>_{L^2_xl^2}.
\end{align*}
Using Lemma \ref{contop} it follows that
\begin{align*}
\bigg| \big<{\sf C}{\sf S}\left((\partial_xH)(\dot{U})\right), {\sf C}{\sf S}\dot{H}\big>_{L^2_xl^2} \bigg|&\leq\int_{\R} \br\partial_x H\br_{l^2}\br\dot{U}\br_{l^2}\br {\sf C}{\sf S}\dot{H}\br_{l^2}\\
& \leq\n\partial_x H\n_{L^\infty_x(l^2)}\n\dot{U}\n_{l^2(L^2_x)}\n {\sf C}{\sf S} \dot{H}\n_{l^2(L^2_x)}\\
&\leq\frac{1}{\sqrt{h_*\boldsymbol{\rho}_1}} \n\partial_x H\n_{L^\infty_x(l^2)}\mathcal{E}(\dot{H},\dot{U})(t),
\end{align*}
and similarly,
\begin{align*}
\bigg|\rho_1\big<{\sf T}{\sf S}\left((\partial_xH)(\dot{U})\right), {\sf T}{\sf S}\dot{H}\big>_{L^2_xl^2}\bigg|&\leq\boldsymbol{\rho}_1\int_{\R}  \br\partial_x H\br_{l^2}\br\dot{U}\br_{l^2}\br {\sf T}{\sf S}\dot{H}\br_{l^2}\\
&\leq\frac{1}{\sqrt{h_*}}\n\partial_x H\n_{L^\infty_x(l^2)}\mathcal{E}(\dot{H},\dot{U})(t).
\end{align*}
The  terms of $(\ref{5})$ are easily controlled using Cauchy-Schwarz inequality.
\begin{align*}
\big<{\sf C}{\sf S} R, {\sf C}{\sf S}\dot{H}\big>_{L^2_xl^2}+&\big<\textbf{R},\boldsymbol{\rho}(\underline{H}+H)\dot{U}\big>_{L^2_xl^2}+\boldsymbol{\rho}_1\big<{\sf T}{\sf S} R, {\sf T}{\sf S}\dot{H}\big>_{L^2_xl^2}\\
&\leq  \mathcal{E}(\dot{H},\dot{U})^\frac{1}{2}(t) \mathcal{E}(R,\textbf{R})^\frac{1}{2}(t).
\end{align*}
Gathering all the above estimates we have
\begin{align*}
&\frac{d}{dt}\mathcal{E}(\dot{H},\dot{U})(t)+\frac{\kappa}{2}\n\partial_x{\sf C}{\sf S}\dot{H}\n_{l^2(L^2_x)}^2+\boldsymbol{\rho}_1\kappa\n\partial_x{\sf T}{\sf S}\dot{H}\n_{l^2(L^2_x)}^2\\
&\leq\bigg(2(1+\boldsymbol{\rho}_1)\kappa^{-1} \n \Dp(\underline{U}+U)\n_{L^\infty_x(l^2)}^2+2\n\partial_xU\n_{l^\infty(L^\infty_x)}+(\frac{1}{\sqrt{h_*}}+\frac{1}{h_*\boldsymbol{\rho}_1}) \n\partial_x H\n_{L^\infty_x(l^2)}\bigg)\mathcal{E}(\dot{H},\dot{U})
(t)\\&\hspace{5cm}+\mathcal{E}(\dot{H},\dot{U})^\frac{1}{2}(t) \mathcal{E}(R,\textbf{R})^\frac{1}{2}(t),
\end{align*}
and hence there exists $C=C(h_*^{-1},\boldsymbol{\rho}_1,M)$ such that
\begin{align*}
\frac{d}{dt}\mathcal{E}(\dot{H},\dot{U})(t)+&\frac{\kappa}{2}\n\partial_x{\sf C}{\sf S}\dot{H}\n_{l^2(L^2_x)}^2+\boldsymbol{\rho}_1\kappa\n\partial_x{\sf T}{\sf S}\dot{H}\n_{l^2(L^2_x)}^2 \\&\leq C\left(1+\kappa^{-1} \n \Dp(\underline{U}+U)\n_{L^\infty_x(l^2)}^2 \right)\mathcal{E}(\dot{H},\dot{U})
(t)+C\mathcal{E}(\dot{H},\dot{U})^\frac{1}{2}(t) \mathcal{E}(R,\textbf{R})^\frac{1}{2}(t).
\end{align*}
We conclude by using Gronwall's inequality.
\end{proof}

\subsection{Large time well-posedness of the multi-layer system}\label{section4.3}

In this subsection we state and prove our first main result, concerning the well-posedness of system \eqref{multicouches} with a time of existence uniform with respect to the number of layers $N$.

\begin{theorem}\label{timeindepof}
Let $s\in\N$ be such that $s>2+\frac{1}{2}$,  and $\underline{M},M^*,h_*,h^*>0$. Then, there exists $C>0$ such that for any $N\in\N^*$ and any $\kappa>0$,

\begin{itemize}
\item for any $(\boldsymbol{\rho},\underline{H},\underline{U})\in \R^{3N}$ such that
\begin{equation*}
\br\boldsymbol{\rho}\br_{l^\infty}  +\br\boldsymbol{\rho}^{-1}\br_{l^\infty} + \br\underline{H}\br_{w^{2,\infty}}+\br\underline{U}\br_{w^{2,\infty}}\leq \underline{M},
\end{equation*}

\item for any initial data $(H_0, U_0)\in H^{s,2}$ with 

\begin{equation*}
M_0:= \n H_0\n_{H^{s-1,1}}+ \n {\sf S}H_0\n_{H^{s,2}}+ \n {\sf T}{\sf S}H_0\n_{H^{s,0}}+ \n U_0\n_{H^{s,2}}+\kappa^\frac{1}{2} \n H_0\n_{H^{s,2}}\leq M^*
\end{equation*}
and 
\begin{equation*}
\underset{(x,i)\in\R\times\{1,\cdots,N\}}{\inf}\underline{H}_i+(H_0)_i(x)\geq h_*,\quad \underset{(x,i)\R\times\{1,\cdots,N\}}{\sup}\underline{H}_i+(H_0)_i(x)\leq h^*,
\end{equation*}
\end{itemize}

the following holds. Denoting 

\begin{equation}
T^{-1}=C\left(1+\kappa^{-1}\left(\n\Dp\underline{U}\n_{l^2}^2+M_0^2\right)\right),
\end{equation}
there exists a unique strong solution $(H, U)\in C([0,T];H^{s,2}(\R)^{2})$ to \eqref{multicouches} and initial data $(H, U)|_{t=0}=(H_0,U_0)$. Moreover, $H\in L^2(0,T;H^{s+1,2}(\R))$ and one has, for any $t\in[0,T]$, the lower and upper bounds 
\begin{align*}
&\underset{(i,x)\in\{1,\cdots,N\}\times \mathbb{R}^d}{\inf}\underline{H}_i+H_i(t,x)\geq \frac{h_*}{2},\quad\underset{(i,x)\in\{1,\cdots,N\}\times \mathbb{R}^d}{\sup}\underline{H}_i+H_i(t,x)\leq 2h^*,
\end{align*}
and the estimate
\[\Norm{(H,U)}_{s}(t)\leq C M_0,\]
where we define
\begin{align*}
\Norm{(H,U)}_{s}(t)&:=\n H(t,\cdot)\n_{H^{s-1,1}}+ \n {\sf S}H(t,\cdot)\n_{H^{s,2}}+ \n {\sf T}{\sf S}H(t,\cdot)\n_{H^{s,0}}+ \n U(t,\cdot)\n_{H^{s,2}}+\kappa^\frac{1}{2} \n H(t,\cdot)\n_{H^{s,2}}\\
&\quad+ \kappa^\frac{1}{2}\n\partial_xH\n_{L^2(0,t;H^{s-1,1})}+\kappa^\frac{1}{2} \n \partial_x {\sf S}H\n_{L^2(0,t;H^{s,2})}+\kappa^\frac{1}{2} \n \partial_x {\sf T}{\sf S}H\n_{L^2(0,t;H^{s,0})}+\kappa \n \partial_x H\n_{L^2(0,t;H^{s,2})}.
\end{align*}

\end{theorem}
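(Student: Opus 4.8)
The plan is to upgrade the short-time solution of Proposition \ref{WPML} to one living on the stated interval by proving an \emph{a priori} estimate on the functional $\Norm{(H,U)}_s$, via energy methods on the quasilinearised equations of Lemma \ref{restes}, and then running a continuity (bootstrap) argument. The heart of the matter is the a priori estimate: assume $(H,U)\in C([0,T'];H^{s,2}(\R)^2)$ with $H\in L^2(0,T';H^{s+1,2}(\R))$ solves \eqref{multicouches} on $[0,T']$, satisfies the non-cavitation bounds $h_*/2\le\underline H_i+H_i\le 2h^*$, and obeys the a priori smallness bound $\Norm{(H,U)}_s(t)\le 2CM_0$; we then show $\Norm{(H,U)}_s(t)\le CM_0$ and that the bounds improve, provided $t\le T$ with $T$ as claimed. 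One first checks that the structural hypotheses of Lemmas \ref{restes} and \ref{energy} hold: $\br\boldsymbol{\rho}\br_{l^\infty}$, $\br\boldsymbol{\rho}^{-1}\br_{l^\infty}$, $\br\underline H\br_{w^{2,\infty}}$, $\br\Dp\underline U\br_{w^{1,\infty}}$ are bounded by $\underline M$; by Lemma \ref{A1} and Sobolev embedding (using $s>2+\tfrac12$), $\n\partial_xH\n_{L^\infty_x(l^2)}\lesssim\n H\n_{H^{s-1,0}}$ and $\n\partial_xU\n_{l^\infty(L^\infty_x)}\lesssim\n U\n_{H^{s,1}}$, hence both are $\lesssim\Norm{(H,U)}_s$; and $\n H\n_{H^{s-1,1}}+\n{\sf S}H\n_{H^{s,1}}+\n{\sf T}{\sf S}H\n_{H^{s,0}}+\n U\n_{H^{s,2}}+\kappa^{1/2}\n H\n_{H^{s,2}}\le\Norm{(H,U)}_s$.

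Next one distributes the control of $\Norm{(H,U)}_s$ among the estimates of Section \ref{section4.2}, matching each block of the norm to the appropriate quasilinearised equation of Lemma \ref{restes}:
\begin{itemize}
\item the top-order coupled block $\n{\sf S}H\n_{H^{s,0}}+\n{\sf T}{\sf S}H\n_{H^{s,0}}+\n U\n_{H^{s,0}}$, together with the diffusive gains $\kappa^{1/2}\n\partial_x{\sf C}{\sf S}H\n_{L^2(0,t;H^{s,0})}$ and $\kappa^{1/2}\n\partial_x{\sf T}{\sf S}H\n_{L^2(0,t;H^{s,0})}$, comes from applying Lemma \ref{energy} to the system for $(\Lambda^sH,\Lambda^sU)$ (second bullet of Lemma \ref{restes}), whose remainders satisfy $\mathcal E^{1/2}(R_{\alpha,0},{\bf R}_{\alpha,0})\lesssim M(1+\kappa\n\partial_xH\n_{H^{s,2}})$; this is where the decomposition \eqref{rhogammadecomp} is essential, and it produces the Gronwall factor $\exp\big(C\int_0^t(1+\kappa^{-1}\n\Dp(\underline U+U)\n_{L^\infty_x(l^2)}^2)\big)$, bounded using $\n\Dp(\underline U+U)\n_{L^\infty_x(l^2)}\le\n\Dp\underline U\n_{l^2}+C\n U\n_{H^{s,1}}$;
\item the $\Dp$- and $\Ds$-derivatives of ${\sf S}H$ (i.e.\ $\rd\partial_x^\alpha H$ and $\rd\Dp\partial_x^\alpha H$, third bullet, first equation) solve transport--diffusion equations, so Lemma \ref{esttransdiff,trans}(1) yields the rest of $\n{\sf S}H\n_{H^{s,2}}$ and $\kappa^{1/2}\n\partial_x{\sf S}H\n_{L^2(0,t;H^{s,2})}$; similarly $\Dp^j\partial_x^\alpha U$ (third bullet, second equation) solves a pure transport equation, so Lemma \ref{esttransdiff,trans}(2) completes $\n U\n_{H^{s,2}}$;
\item the low-order block $\n H\n_{H^{s-1,1}}+\kappa^{1/2}\n\partial_xH\n_{L^2(0,t;H^{s-1,1})}$ follows from Lemma \ref{esttransdiff,trans}(1) applied to the first bullet of Lemma \ref{restes};
\item the top $H$-regularity $\kappa^{1/2}\n H\n_{H^{s,2}}+\kappa\n\partial_xH\n_{L^2(0,t;H^{s,2})}$ follows from Lemma \ref{esttransdiff,trans}(1) applied to the fourth bullet of Lemma \ref{restes}, whose remainders have the $r+\partial_x{\bf r}$ structure with $\kappa^{1/2}\n r\n_{l^2(L^2_x)}+\n{\bf r}\n_{l^2(L^2_x)}\le CM$; multiplying the resulting estimate by $\kappa^{1/2}$ turns the $\kappa^{-1/2}$ of Lemma \ref{esttransdiff,trans}(1) into a harmless factor and leaves a contribution $\lesssim t^{1/2}CM$ from the remainders.
\end{itemize}
Summing these and absorbing the terms proportional to $\kappa\n\partial_xH\n_{L^2(0,t;H^{s,2})}$ (which appear only with a factor $t^{1/2}$) into the left side, one reaches an inequality of the form
\[\Norm{(H,U)}_s(t)\le C\big(M_0+t^{1/2}C(\Norm{(H,U)}_s(t))\big)\exp\!\Big(Ct\big(1+\kappa^{-1}\n\Dp\underline U\n_{l^2}^2+\kappa^{-1}\Norm{(H,U)}_s(t)^2\big)\Big).\]

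To conclude, Proposition \ref{WPML} (whose proof also propagates $H^{s,2}$-regularity, as $M_0\le M^*$ controls $\n H_0\n_{H^{s,0}}+\n U_0\n_{H^{s,0}}$) gives a solution on a maximal interval $[0,T_{\max})$ on an initial portion of which $\Norm{(H,U)}_s\le 2CM_0$ and $h_*/2\le\underline H+H\le 2h^*$. Inserting $\Norm{(H,U)}_s\le 2CM_0$ into the displayed inequality and choosing the constant $C$ in $T^{-1}=C\big(1+\kappa^{-1}(\n\Dp\underline U\n_{l^2}^2+M_0^2)\big)$ large enough forces $\Norm{(H,U)}_s(t)<CM_0$ strictly on $[0,\min(T,T_{\max}))$; moreover $\n H(t)-H_0\n_{l^\infty(L^\infty_x)}\le\int_0^t\n\partial_tH\n_{l^\infty(L^\infty_x)}\lesssim t^{1/2}C(M_0)$ (bounding the advection term by $\Norm{(H,U)}_s$ and the diffusion term by $\kappa\n\partial_x^2H\n_{L^1(0,t;L^\infty_x)}\lesssim t^{1/2}\kappa^{1/2}\Norm{(H,U)}_s$), so the non-cavitation bounds are also strictly improved for $t$ small. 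A standard continuation (re-applying Proposition \ref{WPML} at the final time) then shows the solution extends to $[0,T]$ with all stated bounds. Uniqueness follows by applying the stability estimate of Lemma \ref{energy} to the difference $(\delta H,\delta U)$ of two solutions, which solves a system of the form \eqref{quasipt} with sources controlled by $\mathcal E^{1/2}(\delta H,\delta U)$ times the (bounded) norms of the two solutions; since $\mathcal E(\delta H,\delta U)(0)=0$, Gronwall gives $\delta H=\delta U=0$.

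The main obstacle is the orchestration of the energy estimates: one must route exactly the right derivative combinations through Lemma \ref{energy} --- the only place where the genuine top-order $H$--$U$ coupling is handled, via the symmetriser built from \eqref{rhogammadecomp} --- and everything else through the scalar transport and transport--diffusion estimates of Lemma \ref{esttransdiff,trans}, keeping track of the $\kappa$-weights so that the diffusive gains $\kappa^{1/2}\partial_xH$ absorb the $\kappa^{-1/2}$-losses of Lemma \ref{esttransdiff,trans}(1) and the $\kappa^{-1}\n\Dp(\underline U+U)\n^2$ term stays under control in the exponent, and so that the final time $T$ depends only on $\kappa^{-1}(\n\Dp\underline U\n_{l^2}^2+M_0^2)$ --- all uniformly in $N$, which holds because every constant in Lemmas \ref{restes}--\ref{energy} is $N$-independent.
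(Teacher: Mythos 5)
Your proposal is correct and follows essentially the same route as the paper: quasilinearize via Lemma \ref{restes}, route the top-order coupled block through the symmetrized energy estimate of Lemma \ref{energy} (built on the decomposition \eqref{rhogammadecomp}) and all remaining derivative combinations through the transport/transport--diffusion estimates of Lemma \ref{esttransdiff,trans}, then close with Gronwall and a continuity argument on the time scale $T^{-1}\sim 1+\kappa^{-1}(\n\Dp\underline U\n_{l^2}^2+M_0^2)$. The only (harmless) deviation is the propagation of the non-cavitation bounds: you integrate $\partial_t H$ directly, absorbing $\kappa\partial_x^2H$ via the $\kappa\n\partial_xH\n_{L^2(0,t;H^{s,2})}$ component of the norm, whereas the paper invokes the positivity of the heat kernel so that only the first-order source $g=\partial_x(HU+\underline HU+\underline UH)$ needs an $L^1_tL^\infty_x$ bound; both arguments are valid.
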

\begin{proof}

Let us denote $T^*\in(0,+\infty]$ the maximal time of existence and uniqueness of $(H, U)\in C([0, T^*);H^{s,2}(\R)^{2})$, $H\in L^2(0,T^*;H^{s+1,2}(\R)) $ as provided by Proposition  \ref{WPML}, and

\begin{equation*}
T_*=\sup\left\{0<T<T^*\; :\,\forall t\in(0,T], \quad \frac{h_*}{2}\leq \underline{H}_i+H_i(t,x)\leq2h^*\;\text{and}\; \Norm{(H,U)}_s(t)\leq c_0M_0\right\},
\end{equation*}
where $c_0>1$ will be determined later on. By continuity in time of the solution we deduce that $T_*>0$. Let $t\in (0, T_*)$.

By Lemma \ref{restes} and Lemma \ref{energy} and using the fact that $\br{\sf S}\br_{l^2\to l^2}\leq \frac{1}{\sqrt{N}}\br{\sf T}{\sf S}\br_{l^2\to l^2}+\br{\sf C}{\sf S}\br_{l^2\to l^2}$, we find that there exists $c>1$ depending on $\boldsymbol{\rho}_1, \boldsymbol{\rho}_{N},h_*, h^*$ and $C>0$ depending additionally on $\underline{M}, c_0 M_0$ such that
\begin{align}\label{esti1}
&\n \Lambda^s{\sf S}H(t,\cdot)\n_{l^2(L^2_x)}+ \n \Lambda^s{\sf T}{\sf S}H(t,\cdot)\n_{l^2(L^2_x)}+ \n \Lambda^sU(t,\cdot)\n_{l^2(L^2_x)}\nonumber\\&\quad+\kappa^\frac{1}{2}\n\partial_x{\sf S}\Lambda^s H\n_{L^2(0,t;l^2(L^2_x))}+\kappa^\frac{1}{2}\n\partial_x\Lambda^s{\sf T}{\sf S}H\n_{L^2(0,t;l^2(L^2_x))}\nonumber\\& \quad\quad \leq c\left(\n \Lambda^s{\sf S}H_0\n_{l^2(L^2_x)}+ \n \Lambda^sU_0\n_{l^2(L^2_x)}+ \n \Lambda^s{\sf T}{\sf S}H_0\n_{l^2(L^2_x)}+Cc_0M_0(t+\sqrt{t})\right)\nonumber\\
&\hspace{6cm}\times \exp\left(C\int\limits_0^t\left(1+\kappa^{-1}\n \Dp (\underline{U}+U)(\tau,\cdot)\n_{L^\infty_x(l^2)}^2\right)d\tau\right).
\end{align}
For all $j\in\{1,2\}$ and applying Lemma \ref{restes} and Lemma \ref{esttransdiff,trans} $(1)$ it follows that 
\begin{align}\label{esti2}
\n \Dp^j{\sf S}H(t&,\cdot)\n_{H^{s-j,0}}+\kappa^\frac{1}{2} \n\partial_x \Dp^j{\sf S}H\n_{L^2(0,t;H^{s-j,0})}\nonumber\\
&
\leq C_0\left( \n \Dp^j{\sf S}H_0\n_{H^{s-j,0}}+Cc_0M_0(t+\sqrt{t})\right)\exp\left(C_0\displaystyle\int\limits_0^t\n\partial_x U(\tau,\cdot)\n_{l^\infty(L^\infty_x)}d\tau\right),
\end{align}
and moreover using Lemma \ref{esttransdiff,trans} $(2)$ we have
\begin{multline}\label{esti3}
\n \Dp^jU(t,\cdot)\n_{H^{s-j,0}}
\leq \left( \n \Dp^jU_0\n_{H^{s-j,0}}+Cc_0M_0(t+\sqrt{t})\right)\\
\hspace{4cm}\times\exp\left(C_0\displaystyle\int\limits_0^t\left\|\partial_x \left(\underline{U}+U-\kappa\frac{\partial_xH}{\underline{H}+H}\right)(\tau,\cdot)\right\|_{l^\infty(L^\infty_x)}d\tau\right).
\end{multline}
For $j\in\{0,1\}$, and using Lemma \ref{restes} and Lemma \ref{esttransdiff,trans} $(1)$ we have 
\begin{align*}
\n\Dp^jH(t,\cdot)\n_{H^{s-1-j,0}}+&\kappa^\frac{1}{2} \n\partial_x\Dp
^jH\n_{L^2(0,t;H^{s-1-j,0})}\\
&\leq C_0\left( \n\Dp^jH_0\n_{H^{s-1-j,0}}+Cc_0M_0t\right)\exp\left(C_0\displaystyle\int\limits_0^t\n\partial_x U(\tau,\cdot)\n_{l^\infty(L^\infty_x)}d\tau\right).
\end{align*}
Finally, for any $j\in\{0,1,2\}$ using  Lemma \ref{restes} and Lemma \ref{esttransdiff,trans} $(1)$ we have 

\begin{align}\label{esti4}
\kappa^\frac{1}{2}\n \Dp^jH&(t,\cdot)\n_{H^{s-j,0}}+\kappa \n\partial_x \Dp^jH\n_{L^2(0,t;H^{s-j,0})}\nonumber\\
&\leq C_0\left(\kappa^\frac{1}{2} \n \Dp^jH_0\n_{H^{s-j,0}}+Cc_0M_0(t+\sqrt{t})\right)\exp\left(C_0\displaystyle\int\limits_0^t\n\partial_x U(\tau,\cdot)\n_{l^\infty(L^\infty_x)}d\tau\right).
\end{align}

Since $s>2+\frac{1}{2}$, by using Lemma \ref{A1}, and the continuous injection $\n\cdot\n_{l^\infty(L^\infty_x)}\leq\n\cdot\n_{H^{s-\frac{3}{2},1}}$ and the fact that 
\begin{align*}
&\n\partial_xU(\tau,\cdot)\n_{l^\infty(L^\infty_x)}\leq \n U(\tau,\cdot)\n_{H^{s-\frac{1}{2},1}},
\end{align*}
and
\begin{align*}
\left\|\partial_x\left(\frac{\partial_xH}{\underline{H}+H}\right)(\tau,\cdot) \right\|_{l^\infty(L^\infty_x)}&\leq  \left\|\left(\frac{\partial^2_xH}{\underline{H}+H}\right)(\tau,\cdot)\right\|_{l^\infty(L^\infty_x)}+ \left\|\left(\frac{\partial_xH}{\underline{H}+H}\right)^2(\tau,\cdot) \right\|_{l^\infty(L^\infty_x)}\\
&\leq C(h_*^{-1})(\n\partial_x H(\tau,\cdot)\n_{H^{s-\frac{1}{2},1}}+\n\partial_x H(\tau,\cdot)\n_{H^{s-\frac{3}{2},1}}^2),
\end{align*}
we have then 
\begin{equation*}
\left\|\partial_x \left(\underline{U}+U-\kappa\frac{\partial_xH}{\underline{H}+H}(\tau,\cdot)\right)\right\|_{l^\infty(L^\infty_x)}\leq C(h_*^{-1}) (\n U(\tau,\cdot)\n_{H^{s,2}}+\kappa\n \partial_xH(\tau,\cdot)\n_{H^{s,2}}+\kappa\n H(\tau,\cdot)\n_{H^{s,2}}^2).
\end{equation*}
and 
\begin{equation*}
\n \Dp(\underline{U}+U)\n_{L^\infty_x(l^2)}^2\lesssim \n\Dp\underline{U}\n_{l^2}^2+\n U\n_{H^{s,2}}^2\leq  \n\Dp\underline{U}\n_{l^2}^2+(c_0M_0)^2.
\end{equation*}
Hence gathering estimates $\eqref{esti1}-\eqref{esti4}$ we find that 
\begin{equation*}
\Norm{(H,U)}_{s}(t)\leq c \bigg(M_0+Cc_0M_0(t+\sqrt{t})\bigg)\exp\left(C\left(t+\sqrt{t}+\int\limits_0^t\left(1+\kappa^{-1}(\n\Dp\underline{U}\n_{l^2}^2+(c_0M_0)^2)\right)d\tau\right)\right),
\end{equation*}
where we recall that $c>1$ depends on $h_*$ and $h^*$, and $C>0$ depends on $\underline{M}, h_*,h^*,c_0 M_0$. Hence choosing $c_0=2c$, we find that there exists $C_1\geq1$ depending only on $\underline{M},M^*, h_*,h^*$ such that 
\begin{equation*}
t\left(1+\kappa^{-1}\left(\n\Dp\underline{U}\n_{l^2}^2+M_0^2\right)\right)\leq C_1^{-1}\implies \Norm{(H,U)}_s(t)\leq \frac{3}{4}c_0 M_0.
\end{equation*}
Moreover we notice that since for all $i\in\{1,\cdots,N\}$
\begin{equation*}
\partial_tH_i=\kappa\partial_x^2H_i+g_i,\quad \text{with}\quad g_i=\partial_x\left(H_i U_i+\underline{H}_i U_i+ \underline{U}_iH_i\right),
\end{equation*}
by the positivity of the heat kernel we have 
\begin{align*}
&\underset{(i,x)\in\{1,\cdots,N\}\times \mathbb{R}^d}{\inf}H_i(t,x)\geq \underset{(i,x)\in\{1,\cdots,N\}\times \mathbb{R}^d}{\inf}H_i(0,x)-\n g\n_{L^1(0,t;l^\infty(L^\infty))},\\
&
\underset{(i,x)\in\{1,\cdots,N\}\times \mathbb{R}^d}{\sup}H_i(t,x)\leq \underset{(i,x)\in\{1,\cdots,N\}\times \mathbb{R}^d}{\sup}H_i(0,x)+\n g\n_{L^1(0,t;l^\infty(L^\infty))}.
\end{align*}
By Lemma \ref{A1}  and by Lemma \ref{AlgebreBanach}
\begin{align*}
\n g\n_{l^\infty(L^\infty_x)}&\lesssim \br\underline{H}\br_{w^{1,\infty}}\n U\n_{H^{s,1}}+\br\underline{U}\br_{w^{1,\infty}}\n H\n_{H^{s,1}}+\n U\n_{H^{s,1}}\n H\n_{H^{s,1}}\\
&\leq C(\underline{M})(1+\kappa^{-1}M_0^2).
\end{align*}
Hence augmenting $C_1$ if necessary we find that 
\begin{equation*}
t\left(1+\kappa^{-1}M_0^2\right)\leq C_1^{-1}\implies \forall (i,x)\in\{1,\cdots,N\}\times\mathbb{R}^d,\quad \frac{2}{3}h_*\leq\underline{H}_i +H_i(t,x)\leq \frac{3}{2}h^*.
\end{equation*}
Hence by continuity in time of the solution we find that for all $t\in(0,T_*)$ satisfying that $ t\left(1+\kappa^{-1}\left(\n\Dp\underline{U}\n_{l^2}^2+M_0^2\right)\right)\leq C_1^{-1}$, there exists $\delta>0$ such that $[t-\delta,t+\delta]\subset(0,T_*)$.
By a continuity argument we deduce that $T_*>\left(C_1\left(1+\kappa^{-1}\left(\n\Dp\underline{U}\n_{l^2}^2+M_0^2\right)\right)\right)^{-1}$, which completes the proof.
\end{proof}

\section{Convergence estimate}\label{section5}

This section is dedicated to the proof of our second main result, Theorem \ref{mainth}. We prove that considering a sufficiently regular solution of the hydrostatic continuously stratified system \eqref{contsystem-intro} satisfying the non-cavitation assumption and appropriate bounds, the solutions to the multi-layer systems  \eqref{mltc-intro} with suitably chosen densities $\boldsymbol{\rho}_{i}$, reference depths $\underline{H}_i$, background velocities $\underline{U}_{i}$ and initial data $(H_i,U_i)\vert_{t=0}$ are at a distance $\mathcal{O}(1/N^2)$ to the continuously stratified solution.

This convergence result is deduced from a consistency result obtained in Section \ref{section5.1}, and on stability estimates derived in Section \ref{section5.2}. The proof of Theorem \ref{mainth} is then completed in Section \ref{section5.3}.

\subsection{Consistency}\label{section5.1}

Let $\rho_{\bott}>\rho_{\surf}>0$ and $\kappa>0$. Let $(\underline{h},\underline{u})$ and $(h,u)$ be a sufficiently smooth solution to the continuously stratified system 
\begin{equation}\label{contsystem}
\left\{\begin{array}{l}
\partial_th+\partial_x((\underline{h}+h)(\underline{u}+u))=\kappa\partial_x^2h\\ 
\partial_tu+\left(\underline{u}+u-\kappa\frac{\partial_xh}{\underline{h}+h}\right)\partial_xu+\frac{1}{\varrho}\mathcal{M}\partial_xh=0\\
h_{|t=0}=h_0\\         
u_{|t=0}=u_0,
\end{array}     
\right.  \end{equation}
where $(\mathcal{M}\eta)(\cdot,\varrho)=\rho_{\surf}\int\limits_{\rho_{\surf}}^{\rho_{\bott}}\eta(\cdot,\varrho')d\varrho'+\int\limits_{\rho_{\surf}}^{\varrho}\int\limits_{\varrho'}^{\rho_{\bott}}\eta(\cdot,\varrho'')d\varrho''d\varrho'
$.

Let $N\in\N^*$ and recall (see Section \ref{section2}) the definition of the linear operator $P_N$
\[
P_N:	\begin{array}{ccc}
	\mathcal{C}([\rho_{\surf},\rho_{\bott}])& \rightarrow & \mathbb{R}^N\\
	f&\mapsto &\left(f(\boldsymbol{\rho}_i\right))_{1\leq i\leq N}
\end{array}
\]
where $\boldsymbol{\rho}_i=\rho_{\surf}+(i-\tfrac12) \frac{\rho_{\bott}-\rho_{\surf}}{N}$. 

Notice that $P_N(fg)=P_N(f)P_N(g)$.
Hence applying $P_N$ to \eqref{contsystem}, we see that $(P_N h,P_N u)$ satisfy 
\begin{equation}\label{smoothsoldiscretised}
\left\{\begin{array}{l}
\partial_tP_Nh+\partial_x(P_N(\underline{h}+h)P_N(\underline{u}+u))=\kappa\partial^2_xP_Nh\\ 
\partial_tP_Nu+P_N\left(\underline{u}+u-\kappa\frac{\partial_xh}{\underline{h}+h}\right)\partial_xP_Nu+{\sf \Gamma}\partial_x P_Nh=\boldsymbol{R}_N\\
P_Nh_{|t=0}=P_N(h_0)\\         
P_Nu_{|t=0}=P_N(u_0),
\end{array}     
\right.  
\end{equation}\\
with ${\sf \Gamma}_{i,j}=\frac1N\frac{\min(\boldsymbol{\rho}_i,\boldsymbol{\rho}_j)}{\boldsymbol{\rho}_i}$ and
\begin{equation}\label{R_N}
\boldsymbol{R}_N={\sf \Gamma}\partial_xP_Nh-P_N\left(\frac{1}{\varrho}\mathcal{M}\partial_xh\right).
\end{equation}
We now estimate $\boldsymbol{R}_N$.

\begin{lemma}\label{ResteN}
For any $\rho_{\bott}>\rho_{\surf}>0$, there exists $C>0$ such that for any $N\in\N^*$, and any sufficiently regular function $h:\R\times(\rho_{\surf},\rho_{\bott})\to \R$, $\boldsymbol{R}_N$ defined by \eqref{R_N} satisfies
\begin{align*}
\n&\boldsymbol{R}_N\n_{H^{s,2}}\leq\frac{C}{N^2}\left(\n\partial_\varrho\partial_x\Lambda^sh\n_{L^\infty_\varrho(L^2_x)}+\n\partial^2_\varrho\partial_x\Lambda^sh\n_{L^\infty_\varrho(L^2_x)}\right).
\end{align*}
\end{lemma}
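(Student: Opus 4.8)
The idea is to compare, componentwise, the discrete Montgomery operator $\sf\Gamma$ applied to $P_Nh$ with the sampling $P_N(\tfrac1\varrho\mathcal M \partial_x h)$, and to recognize the difference as the error of the midpoint quadrature rule at the points $\boldsymbol\rho_i=\rho_{\surf}+(i-\tfrac12)/N$. First I would spell out the $i$-th component of $\sf\Gamma\partial_xP_Nh$: using $\boldsymbol\rho_{i+1}-\boldsymbol\rho_i=1/N$ and $\min(\boldsymbol\rho_i,\boldsymbol\rho_j)=\boldsymbol\rho_j$ for $j\le i$, $=\boldsymbol\rho_i$ for $j>i$, one gets
\[
\big({\sf\Gamma}\partial_xP_Nh\big)_i=\frac1{\boldsymbol\rho_i}\Big(\sum_{j=1}^i \tfrac1N\,\boldsymbol\rho_j\,\partial_xh(\cdot,\boldsymbol\rho_j)+\boldsymbol\rho_i\sum_{j=i+1}^N\tfrac1N\,\partial_xh(\cdot,\boldsymbol\rho_j)\Big),
\]
which is exactly the midpoint-rule approximation (with nodes $\boldsymbol\rho_j$ and weights $1/N$) of
\[
\big(P_N(\tfrac1\varrho\mathcal M\partial_xh)\big)_i=\frac1{\boldsymbol\rho_i}\Big(\rho_{\surf}\!\int_{\rho_{\surf}}^{\rho_{\bott}}\!\partial_xh(\cdot,\varrho')\,d\varrho'+\int_{\rho_{\surf}}^{\boldsymbol\rho_i}\!\int_{\varrho'}^{\rho_{\bott}}\!\partial_xh(\cdot,\varrho'')\,d\varrho''\,d\varrho'\Big),
\]
after rewriting $\rho_{\surf}\int_{\rho_{\surf}}^{\rho_{\bott}}(\cdots)+\int_{\rho_{\surf}}^{\boldsymbol\rho_i}\int_{\varrho'}^{\rho_{\bott}}(\cdots)$ as a single integral $\int_{\rho_{\surf}}^{\rho_{\bott}}\min(\boldsymbol\rho_i,\varrho')\,\partial_xh(\cdot,\varrho')\,d\varrho'$ (Fubini on the double integral). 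The outer factor $1/\boldsymbol\rho_i$ is common to both and is bounded by $1/\rho_{\surf}$, so it suffices to estimate the quadrature error of $\varrho'\mapsto \min(\boldsymbol\rho_i,\varrho')\,\partial_xh(\cdot,\varrho')$ on each subinterval $[\boldsymbol\rho_j-\tfrac1{2N},\boldsymbol\rho_j+\tfrac1{2N}]$.

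Next I would invoke the standard midpoint-rule remainder: for $\phi\in C^2$, $\int_{a-\delta}^{a+\delta}\phi-2\delta\,\phi(a)=\tfrac{\delta^3}{3}\phi''(\xi)$ for some $\xi$, so with $2\delta=1/N$ the error over one cell is $O(N^{-3}\|\phi''\|_{L^\infty})$ and, summing $N$ cells, $O(N^{-2}\|\phi''\|_{L^\infty})$. The only subtlety is that $\varrho'\mapsto\min(\boldsymbol\rho_i,\varrho')$ has a kink at $\varrho'=\boldsymbol\rho_i$, but this kink sits exactly at a node $\boldsymbol\rho_i$, which is the \emph{midpoint} of its cell; on that single cell I would instead split the integral at $\boldsymbol\rho_i$ and estimate directly (the integrand is Lipschitz there, giving an $O(N^{-2})$ contribution from that one cell as well — actually $O(N^{-2})$ since $\min(\boldsymbol\rho_i,\boldsymbol\rho_i)$ matches and the defect is second order in $1/N$). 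On all other cells $\min(\boldsymbol\rho_i,\varrho')$ is smooth (either $\equiv\varrho'$ or $\equiv\boldsymbol\rho_i$), so $\phi''=\partial_{\varrho'}^2(\min(\boldsymbol\rho_i,\varrho')\partial_xh)=\min(\boldsymbol\rho_i,\varrho')\,\partial_{\varrho'}^2\partial_xh+2\,\mathbf 1_{\varrho'<\boldsymbol\rho_i}\,\partial_{\varrho'}\partial_xh$, whose $L^\infty_\varrho$ norm (after applying $\Lambda^s$ in $x$, which commutes with everything) is controlled by $\rho_{\bott}(\|\partial_\varrho^2\partial_x\Lambda^sh\|_{L^\infty_\varrho L^2_x}+2\|\partial_\varrho\partial_x\Lambda^sh\|_{L^\infty_\varrho L^2_x})$. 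This yields the bound for the $H^{s,0}$ part, i.e.\ $\|\Lambda^s\boldsymbol R_N\|_{l^2(L^2_x)}$.

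Finally I would handle the $\Dp$ and $\Ds$ contributions to the $H^{s,2}$ norm. Since $\Dp$ acts only on the discrete (density) index, $\big(\Dp\boldsymbol R_N\big)_i=N\big((\boldsymbol R_N)_i-(\boldsymbol R_N)_{i+1}\big)$; rather than differentiating the quadrature-error formula (which would lose a power of $N$), I would note that $\boldsymbol R_N$ is itself a \emph{function of $\boldsymbol\rho_i$ sampled}, namely $\boldsymbol R_N=P_N(\mathfrak r_N)$ where $\mathfrak r_N(\cdot,\varrho):=\tfrac1\varrho\big[\mathcal Q_N-\mathcal M\big]\partial_xh$ with $\mathcal Q_N$ the piecewise-constant midpoint quadrature operator — but more cleanly, I would re-derive the cellwise remainder in integral (Peano-kernel) form so that $\boldsymbol R_N$ appears as $P_N$ of a function $\varrho\mapsto\int K_N(\varrho,\varrho')\,\partial_{\varrho'}^2(\min(\varrho,\varrho')\partial_xh)\,d\varrho'$ that is itself $C^2$ in $\varrho$ with each $\varrho$-derivative again $O(N^{-2})$ (the $N^{-2}$ comes from the measure of the kernel's support pattern, not from the smoothness in $\varrho$). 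Then Lemma \ref{op P_N} gives $\|\boldsymbol R_N\|_{H^{s,2}}\lesssim\|\mathfrak r_N\|_{\infty,s,2}$, and each of the three pieces $\|\mathfrak r_N\|_{L^\infty_\varrho H^s_x}$, $\|\partial_\varrho\mathfrak r_N\|_{L^\infty_\varrho H^{s-1}_x}$, $\|\partial_\varrho^2\mathfrak r_N\|_{L^\infty_\varrho L^2_x}$ is $O(N^{-2})$ times $\|\partial_\varrho\partial_x\Lambda^sh\|_{L^\infty_\varrho L^2_x}+\|\partial_\varrho^2\partial_x\Lambda^sh\|_{L^\infty_\varrho L^2_x}$ (the $\varrho$-derivatives falling on the $1/\varrho$ prefactor and on $\min(\varrho,\varrho')$ only improve or preserve the order, and the highest-$x$-regularity term always appears with at least one $\partial_\varrho$ because of the structure of the remainder). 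The \textbf{main obstacle} I anticipate is exactly this bookkeeping of the $\Dp,\Ds$ pieces together with the non-smoothness of $\min(\varrho,\varrho')$: one must be careful that differentiating in the discrete index $i$ does not cost a factor $N$, which is why routing everything through a Peano-kernel representation and then Lemma \ref{op P_N} (rather than finite-differencing the error term) is the right move.
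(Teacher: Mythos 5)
Your treatment of the $H^{s,0}$ component is correct and is essentially the paper's own argument: rewrite $(\mathcal M\partial_xh)(\cdot,\boldsymbol{\rho}_i)=\int_{\rho_{\surf}}^{\rho_{\bott}}\min(\varrho,\boldsymbol{\rho}_i)\partial_xh(\cdot,\varrho)\,d\varrho$, recognize ${\sf \Gamma}\partial_xP_Nh$ as its composite midpoint approximation, and treat the kink of $\min(\cdot,\boldsymbol{\rho}_i)$ by splitting the $i$-th cell at its midpoint and using the rectangle rule on the two halves.

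The gap is in the $\Dp$ and $\Ds$ components, which carry most of the $H^{s,2}$ norm. You propose to exhibit $\boldsymbol R_N=P_N(\mathfrak r_N)$ with $\mathfrak r_N$ ``$C^2$ in $\varrho$ with each $\varrho$-derivative $O(N^{-2})$'' and then invoke Lemma \ref{op P_N}; but no such representative is produced, and the natural one fails. Writing $g=\partial_x\Lambda^{s}h$ and $F(\varrho)=\frac1N\sum_j\min(\varrho,\boldsymbol{\rho}_j)g(\cdot,\boldsymbol{\rho}_j)-\int_{\rho_{\surf}}^{\rho_{\bott}}\min(\varrho,\varrho')g(\cdot,\varrho')\,d\varrho'$, so that $(\boldsymbol{\rho}\Lambda^s\boldsymbol R_N)_i=F(\boldsymbol{\rho}_i)$, one finds $F'(\varrho)=\frac1N\sum_{j:\,\boldsymbol{\rho}_j>\varrho}g(\cdot,\boldsymbol{\rho}_j)-\int_\varrho^{\rho_{\bott}}g$ and hence, distributionally, $F''=g(\cdot,\varrho)-\frac1N\sum_j g(\cdot,\boldsymbol{\rho}_j)\,\delta_{\boldsymbol{\rho}_j}$: the second $\varrho$-derivative contains Dirac masses at the nodes, and its absolutely continuous part is $O(1)$, not $O(N^{-2})$. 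The $O(N^{-2})$ bound on $\Ds(\boldsymbol{\rho}\Lambda^{s-2}\boldsymbol R_N)$ therefore cannot follow from the smoothness of any sampled function via Lemma \ref{op P_N}; it comes from a cancellation between the smooth part and the atoms of $F''$ (itself another midpoint-rule identity), equivalently from the special structure of the second-difference kernel $k_i(\varrho)=\min(\varrho,\boldsymbol{\rho}_i)-2\min(\varrho,\boldsymbol{\rho}_{i+1})+\min(\varrho,\boldsymbol{\rho}_{i+2})$, which is supported in $[\boldsymbol{\rho}_i,\boldsymbol{\rho}_{i+2}]$ and bounded by $1/N$. The Peano-kernel variant meets the same obstruction: the composite midpoint kernel has sup norm $O(N^{-2})$ but $\varrho$-derivative only $O(N^{-1})$, so the smallness of the discrete differences rests on the kernel taking identical values at all nodes, not on an $O(N^{-2})$ bound for $\partial_\varrho^2$. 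You correctly flagged this bookkeeping as the main obstacle, but the proposed fix does not close it. The paper instead computes $\Dp(\boldsymbol{\rho}\Lambda^{s-1}\boldsymbol R_N)$ and $\Ds(\boldsymbol{\rho}\Lambda^{s-2}\boldsymbol R_N)$ explicitly through the kernels $q_i$ and $k_i$, applies Taylor expansions at the surviving cells, and finally removes the weight $\boldsymbol{\rho}$ using the discrete Leibnitz rule together with $\Dp\boldsymbol{\rho}=-(1,\cdots,1)^t$ and $\Ds\boldsymbol{\rho}=0$; some version of this explicit computation is unavoidable.
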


\begin{proof}~\\
We recall that the densities $\boldsymbol{\rho}_{i}$ are defined for all $i\in\{1,2,\cdots,N\}$ as $\boldsymbol{\rho}_{i}=\rho_{\surf}+\frac{i-\frac{1}{2}}{N}(\rho_{\bott}-\rho_{\surf})$, and in this proof we extend this definition to $i\in\{\frac{1}{2},\frac{3}{2},\cdots,N+\frac{1}{2}\}$. In this proof, we denote by ${C(\rho_{\surf},\rho_{\bott})>0}$ a constant that depends only on $\rho_{\surf}$ and $\rho_{\bott}$ and which will increase if necessary throughout the proof. For $i\in\{1,\cdots,N\}$ fixed and for almost every $(t,x)\in [0,T]\times\mathbb{R}$ with $T$ the time of existence and uniqueness of the solution $(h,u)$, integration by parts yields 
\begin{align*}
(\mathcal{M}\partial_x h)(x,\boldsymbol{\rho}_i)&=\rho_{\surf}\int\limits_{\rho_{\surf}}^{\rho_{\bott}}\partial_xh(x,\varrho)d\varrho+\int\limits_{\rho_{\surf}}^{\boldsymbol{\rho}_i}\int\limits_{\varrho}^{\rho_{\bott}}\partial_xh(x,\varrho')d\varrho'd\varrho\\
&=\int\limits_{\rho_{\surf}}^{\boldsymbol{\rho}_i}\varrho \partial_xh(x,\varrho)d\varrho+\boldsymbol{\rho}_i\int\limits_{\boldsymbol{\rho}_i}^{\rho_{\bott}}\partial_xh(x,\varrho)d\varrho\\
&=\int\limits_{\rho_{\surf}}^{\rho_{\bott}}\min(\varrho,\boldsymbol{\rho}_i)\partial_xh(x,\varrho)d\varrho.
\end{align*}
This yields the following identites.

Firstly, one has
\begin{align*}
\big(\boldsymbol{\rho}\Lambda^s\boldsymbol{R}_N\big)_i
&=-\sum_{j=1}^N\int\limits_{\boldsymbol{\rho}_{j-\frac{1}{2}}}^{\boldsymbol{\rho}_{j+\frac{1}{2}}}\min(\varrho,\boldsymbol{\rho}_i)\partial_x\Lambda^sh(\cdot,\varrho)-\min(\boldsymbol{\rho}_i,\boldsymbol{\rho}_j)\partial_x \Lambda^sh(\cdot,\boldsymbol{\rho}_j)d\varrho\\
&=-\sum\limits_{j=1}^{i-1}\int\limits_{\boldsymbol{\rho}_{j-\frac{1}{2}}}^{\boldsymbol{\rho}_{j+\frac{1}{2}}}(\varrho\partial_x\Lambda^sh(\cdot,\varrho)-\boldsymbol{\rho}_j\partial_x\Lambda^sh(\cdot,\boldsymbol{\rho}_j))d\varrho-\boldsymbol{\rho}_i\sum\limits_{j=i+1}^{N}\int\limits_{\boldsymbol{\rho}_{j-\frac{1}{2}}}^{\boldsymbol{\rho}_{j+\frac{1}{2}}}(\partial_x\Lambda^sh(\cdot,\varrho)-\partial_x\Lambda^sh(\cdot,\boldsymbol{\rho}_j))d\varrho\\
&\hspace{0,5cm}-\int\limits_{\boldsymbol{\rho}_{i-\frac{1}{2}}}^{\boldsymbol{\rho}_{i}}(\varrho\partial_x\Lambda^sh(\cdot,\varrho)-\boldsymbol{\rho}_i\partial_x\Lambda^sh(\cdot,\boldsymbol{\rho}_i))d\varrho-\boldsymbol{\rho}_i\int\limits_{\boldsymbol{\rho}_{i}}^{\boldsymbol{\rho}_{i+\frac{1}{2}}}(\partial_x\Lambda^sh(\cdot,\varrho)-\partial_x\Lambda^sh(\cdot,\boldsymbol{\rho}_i))d\varrho.
\end{align*}
Hence by numerical integration midpoint rule (applied to the first two terms) and the rectangle rule (applied to the last two terms) we infer
\begin{align}\label{restd0}
\n\left(\Lambda^s\boldsymbol{R}_N\right)_i\n_{L^2(\mathbb{R}^d)}\leq&\frac{C(\rho_{\surf},\rho_{\bott})}{N^2}\left(\n\partial_\varrho\partial_x\Lambda^sh(t,\cdot,\cdot)\n_{L^\infty_\varrho(L^2_x)}+\n\partial^2_\varrho\partial_x\Lambda^sh(t,\cdot,\cdot)\n_{L^\infty_\varrho(L^2_x)}\right).
\end{align}

Secondly, one has
\begin{align*}
\bigg(\Dp(\boldsymbol{\rho}\Lambda^{s-1}\boldsymbol{R}_N)\bigg)_i&=N \left(\left(\boldsymbol{\rho}\Lambda^{s-1}\boldsymbol{R}_N\right)_i-\left(\boldsymbol{\rho}\Lambda^{s-1}\boldsymbol{R}_N\right)_{i+1}\right)\\&
=-N\sum_{j=1}^N\int\limits_{\boldsymbol{\rho}_{j-\frac{1}{2}}}^{\boldsymbol{\rho}_{j+\frac{1}{2}}}q_i(\varrho)\partial_x\Lambda^{s-1}h(\cdot,\varrho)- q_i(\boldsymbol{\rho}_j)\partial_x \Lambda^{s-1}h(\cdot,\boldsymbol{\rho}_j)d\varrho
\end{align*}
where $ q_i:\varrho\mapsto\min(\varrho,\boldsymbol{\rho}_i)-\min(\varrho,\boldsymbol{\rho}_{i+1})$. After simple computations we infer 
\begin{align*}
\bigg(\Dp(\boldsymbol{\rho}\Lambda^{s-1}\boldsymbol{R}_N)\bigg)_i=&-N\Bigg(\int\limits_{\boldsymbol{\rho}_{i}}^{\boldsymbol{\rho}_{i+1}}(\boldsymbol{\rho}_i-\varrho)\partial_x\Lambda^{s-1}h(\cdot,\varrho)-{2N}(\boldsymbol{\rho}_i-\boldsymbol{\rho}_{i+1})\partial_x\Lambda^{s-1}h(\cdot,\boldsymbol{\rho}_{i+1})d\varrho\Bigg)\\
&-N(\boldsymbol{\rho}_i-\boldsymbol{\rho}_{i+1})\Bigg(\int\limits_{\boldsymbol{\rho}_{i+1}}^{\boldsymbol{\rho}_{i+\frac{3}{2}}}\partial_x\Lambda^{s-1}h(\cdot,\varrho)-\partial_x\Lambda^{s-1}h(\cdot,\boldsymbol{\rho}_{i+1})d\varrho\Bigg)\\
&-\sum\limits_{j=i+2}^N\Bigg(\int\limits_{\boldsymbol{\rho}_{j-\frac{1}{2}}}^{\boldsymbol{\rho}_{j+\frac{1}{2}}}\partial_x\Lambda^{s-1}h(\cdot,\varrho)-\partial_x\Lambda^{s-1}h(\cdot,\boldsymbol{\rho}_{j})d\varrho\Bigg).
\end{align*}
Consequently, using the Taylor expansion of $\varrho\mapsto \partial_x\Lambda^{s-1}h(\cdot,\varrho)$ up to the first order in the neighborhood of $\boldsymbol{\rho}_{i+1}$ and up to the second order in the neighborhood of $\boldsymbol{\rho}_{j}$ for $j\in\{i+2,\cdots,N\}$,
we obtain
\begin{align*}
\n\left(\Dp\left(\boldsymbol{\rho}\Lambda^{s-1}\boldsymbol{R}_N\right)\right)_i\n_{L^2_x} \leq 
&\frac{C(\rho_{\bott})}{N^2}\left(\n\partial_\varrho\partial_x\Lambda^{s-1}h(t,\cdot,\cdot)\n_{L^\infty_\varrho(L^2_x)}+\n\partial^2_\varrho\partial_x\Lambda^{s-1}h(t,\cdot,\cdot)\n_{L^\infty_\varrho(L^2_x)}\right).
\end{align*}
Using Lemma \ref{IL} (2) and the fact that $\Dp \boldsymbol{\rho}=-(1,\cdots,1)^t\in \R^{N-1}$ in addition to the previous estimate \eqref{restd0}, we infer
\begin{align}\label{restd1}
\n\left(\Dp\Lambda^{s-1}\boldsymbol{R}_N\right)_i\n_{L^2_x} \leq 
&\frac{C(\rho_{\surf},\rho_{\bott})}{N^2}\left(\n\partial_\varrho\partial_x\Lambda^{s-1}h(t,\cdot,\cdot)\n_{L^\infty_\varrho(L^2_x)}+\n\partial^2_\varrho\partial_x\Lambda^{s-1}h(t,\cdot,\cdot)\n_{L^\infty_\varrho(L^2_x)}\right).
\end{align}

Lastly, one has
\begin{align*}
\bigg(\Ds(\boldsymbol{\rho}\Lambda^{s-2}\boldsymbol{R}_N)\bigg)_i&=N \left( \left(\Dp(\boldsymbol{\rho}\Lambda^{s-2}\boldsymbol{R}_N)\right)_i-\left(\Dp(\boldsymbol{\rho}\Lambda^{s-2}\boldsymbol{R}_N)\right)_{i+1}\right)\\
&=-N^2\sum_{j=1}^N\left(\int\limits_{\boldsymbol{\rho}_{j-\frac{1}{2}}}^{\boldsymbol{\rho}_{j+\frac{1}{2}}}k_i(\varrho)\partial_x\Lambda^{s-2}h(\cdot,\varrho)- k_i(\boldsymbol{\rho}_j)\partial_x \Lambda^{s-2}h(\cdot,\boldsymbol{\rho}_j)d\varrho\right),
\end{align*}
where $ k_i:\varrho \mapsto\min(\varrho,\boldsymbol{\rho}_i)-2\min(\varrho,\boldsymbol{\rho}_{i+1})+\min(\varrho,\boldsymbol{\rho}_{i+2})$.
After simple computations we infer
\begin{align*}
\bigg(\Ds(\boldsymbol{\rho}\Lambda^{s-2}\boldsymbol{R}_N)\bigg)_i=&-N^2\Bigg(\int\limits_{\boldsymbol{\rho}_{i}}^{\boldsymbol{\rho}_{i+1}}(\boldsymbol{\rho}_i-\varrho)\partial_x\Lambda^{s-2}h(\cdot,\varrho)d\varrho+\int\limits_{\boldsymbol{\rho}_{i+1}}^{\boldsymbol{\rho}_{i+2}}(\boldsymbol{\rho}_i-2\boldsymbol{\rho}_{i+1}+\varrho)\partial_x\Lambda^{s-2}h(\cdot,\varrho)d\varrho\\
&\hspace{1cm}-\frac{1}{N}(\boldsymbol{\rho}_i-\boldsymbol{\rho}_{i+1})\partial_x\Lambda^{s-2}h(\cdot,\boldsymbol{\rho}_{i+1})\Bigg).
\end{align*} 
Consequently, using the Taylor expansion of $\varrho\mapsto \partial_x\Lambda^{s-2}h(\cdot,\varrho)$ up to the second order in the neighborhood of $\boldsymbol{\rho}_{i+1}$, we obtain
\begin{equation*}
\|\Ds(\boldsymbol{\rho}\Lambda^{s-2}\boldsymbol{R}_N)\|_{L^2_x}\leq
\frac{C(\rho_{\bott})}{N^2}\left(\n\partial_\varrho\partial_x\Lambda^{s-2}h(t,\cdot,\cdot)\n_{L^\infty_\varrho(L^2_x)}+\n\partial^2_\varrho\partial_x\Lambda^{s-2}h(t,\cdot,\cdot)\n_{L^\infty_\varrho(L^2_x)}\right).
\end{equation*}
Using Lemma \ref{IL} (3) and the fact that $\Ds \boldsymbol{\rho}=(0,\cdots,0)^t\in \R^{N-2}$ in addition to the previous estimates \eqref{restd0}, \eqref{restd1}, we infer 
\begin{equation*}
\|\Ds(\Lambda^{s-2}\boldsymbol{R}_N)\|_{L^2_x}\leq
\frac{C(\rho_{\surf},\rho_{\bott})}{N^2}\left(\n\partial_\varrho\partial_x\Lambda^{s-2}h(t,\cdot,\cdot)\n_{L^\infty_\varrho(L^2_x)}+\n\partial^2_\varrho\partial_x\Lambda^{s-2}h(t,\cdot,\cdot)\n_{L^\infty_\varrho(L^2_x)}\right).
\end{equation*}
The proof is complete.
\end{proof}

\subsection{Stability}\label{section5.2}
In this subsection we will provide the key ingredients towards stability estimates on the difference between the solutions to the continuously stratified system \eqref{contsystem} (after applying the projection operator $P_N$) and the corresponding solutions to  \eqref{multicouches}. These stability estimates are obtained by considering the linearized system satisfied by the difference and their derivatives, carefully estimating the remainders that result from this linearization. In the following Lemma we estimate these remainder terms. 

\begin{lemma}\label{petitsr}
Let $s\in\N$, $s>2+\frac{1}{2}$, there exists $C>0$, such that for any $N\in\N^*$, $\kappa>0$, and any $(\underline{H},H, U)$ and $(\underline h,h,u)$ sufficiently smooth respectively on $\R$ and $\Omega:=(\rho_{\surf},\rho_{\bott})\times\R$, setting 
\begin{align*}
&R'=-(U-P_Nu)\partial_xP_Nh-(H-P_Nh)\partial_xP_Nu,\\
&\boldsymbol{R}'=-\left((U-P_Nu)-\kappa\left(\frac{\partial_xH}{\underline{H}+H}-P_N\left(\frac{\partial_xh}{\underline{h}+h}\right)\right)\right)\partial_xP_Nu,
\end{align*}
the following estimates hold.
\begin{enumerate}
\item 
\begin{align*}
\n R'\n_{H^{s-1,1}}\leq C\n U-P_Nu\n_{H^{s-1,1}}\n \partial_xh\n_{\infty,s-1,1}+C\n H-P_Nh\n_{H^{s-1,1}} \n \partial_xu\n_{\infty,s-1,1}.
\end{align*}
\item
\begin{align*}
\n {\sf S}R'\n_{H^{s,0}}+\n {\sf T}{\sf S}R'\n_{H^{s,0}}\leq C\n U-P_Nu\n_{H^{s,0}}\n \partial_xh\n_{\infty,s,0}+C\n {\sf S}( H-P_Nh)\n_{H^{s,0}} \n \partial_xu\n_{\infty,s-1,1}&\\
+C\n {\sf T}{\sf S}( H-P_Nh)\n_{H^{s,0}} \n \partial_xu\n_{\infty,s-1,1}+C\n  H-P_Nh\n_{H^{s-1,0}} \n \partial_xu\n_{\infty,s,0}.&
\end{align*}
\item
\begin{align*}
\n\boldsymbol{R}'\n_{H^{s,0}}\leq C\n U-P_Nu\n_{H^{s,0}}\n \partial_xu\n_{\infty,s,0}+C\kappa\left\|\frac{\partial_xH}{\underline{H}+H}-P_N\left(\frac{\partial_xh}{\underline{h}+h}\right)\right\|_{H^{s,0}}\n \partial_xu\n_{\infty,s,0}.
\end{align*}
\item
\begin{align*}
\n\boldsymbol{R}'\n_{H^{s,2}}\leq  C\n U-P_Nu\n_{H^{s,2}}\n \partial_xu\n_{\infty,s,2}+C\kappa\left\|\frac{\partial_xH}{\underline{H}+H}-P_N\left(\frac{\partial_xh}{\underline{h}+h}\right)\right\|_{H^{s,2}}\n \partial_xu\n_{\infty,s,2}.
\end{align*}
\end{enumerate}
\end{lemma}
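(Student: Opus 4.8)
The plan is to treat the four estimates in a unified way, since they differ only in the functional spaces (in particular the number of applications of $\Dp$) and the choice of product/algebra estimates invoked. The two quantities to control are $R'$, a sum of two products, and $\boldsymbol R'$, again a product but with a coefficient involving $\kappa$ and the nonlinear term $\frac{\partial_xH}{\underline H+H}-P_N\big(\frac{\partial_x h}{\underline h+h}\big)$. For each term the strategy is: first rewrite it as a (discrete) product $F\,G$ where one factor is the \emph{difference} of a discrete and a continuous quantity (e.g. $U-P_Nu$, or $H-P_Nh$, or the coefficient difference in $\boldsymbol R'$) and the other factor is the $P_N$-projection of a derivative of the reference solution $(h,u)$; then apply the Banach-algebra / tame product estimates of Lemma~\ref{AlgebreBanach} (and, at the lowest regularity levels where one factor is not controlled in the algebra norm, the product estimates of Lemma~\ref{comnatu} together with Lemma~\ref{A1}); and finally bound the $H^{s,k}$-norm of the $P_N$-projected factor by the continuous norm $\n\cdot\n_{\infty,s,k}$ using Lemma~\ref{op P_N}.

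Concretely, for item (1) I would write $R'=-(U-P_Nu)\,(\partial_x P_N h)-(H-P_Nh)\,(\partial_x P_N u)$ and note $\partial_x P_N h=P_N(\partial_x h)$, $\partial_x P_N u=P_N(\partial_x u)$ since $P_N$ commutes with $\partial_x$. Since $s>2+\tfrac12$, $H^{s-1,1}$ is a Banach algebra by Lemma~\ref{AlgebreBanach}(1) (here one uses $s-1\geq t_0+\tfrac12$), so $\n R'\n_{H^{s-1,1}}\lesssim \n U-P_Nu\n_{H^{s-1,1}}\n P_N(\partial_x h)\n_{H^{s-1,1}}+\n H-P_Nh\n_{H^{s-1,1}}\n P_N(\partial_x u)\n_{H^{s-1,1}}$, and Lemma~\ref{op P_N} turns $\n P_N(\partial_x h)\n_{H^{s-1,1}}$ into $\n\partial_x h\n_{\infty,s-1,1}$, and similarly for $u$. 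Items (3) and (4) are handled the same way: for (4) one uses the Banach algebra $H^{s,2}$ (Lemma~\ref{AlgebreBanach}(1), valid since $s\geq t_0+\tfrac32$) to write $\n\boldsymbol R'\n_{H^{s,2}}\lesssim \big(\n U-P_Nu\n_{H^{s,2}}+\kappa\,\|\tfrac{\partial_xH}{\underline H+H}-P_N(\tfrac{\partial_xh}{\underline h+h})\|_{H^{s,2}}\big)\,\n P_N(\partial_x u)\n_{H^{s,2}}$, then Lemma~\ref{op P_N} gives $\n P_N(\partial_x u)\n_{H^{s,2}}\lesssim\n\partial_x u\n_{\infty,s,2}$; item (3) is the analogous (easier) $H^{s,0}$ statement, using only the $l^2(H^s_x)$ product estimate from Lemma~\ref{comnatu}(2) plus $\n\partial_x u\n_{l^\infty(L^\infty_x)}\lesssim\n\partial_x u\n_{\infty,s,0}$ via Sobolev embedding and Lemma~\ref{A1}.

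Item (2) is the delicate one and I expect it to be the main obstacle. The point is that $R'$ contains $(H-P_Nh)\,\partial_x P_N u$, and one wants to estimate ${\sf S}R'$ and ${\sf T}{\sf S}R'$ \emph{without} ever using $\n H-P_Nh\n_{H^{s,\cdot}}$ (which is not available — only the weaker $\n H-P_Nh\n_{H^{s-1,1}}$ together with $\n{\sf S}(H-P_Nh)\n_{H^{s,0}}$ and $\n{\sf T}{\sf S}(H-P_Nh)\n_{H^{s,0}}$ are controlled). The remedy is to follow exactly the bookkeeping already performed in the proof of Lemma~\ref{restes} for the term ${\sf S}R_{\alpha,0}$: apply Abel's summation (Lemma~\ref{IL}(1)) to move the operator ${\sf S}$ past the product, producing terms of the form $[\partial_x^\alpha;\cdot,\cdot]$, products $(\partial_x^\alpha U)(\partial_x H)$, and ${\sf S}_0$-combinations involving $\Dp$; then each term is matched, via Lemma~\ref{A1}, Lemma~\ref{contop} ($\n{\sf S}\n_{l^1\to l^2}\leq 1$, $\n{\sf S}_0\n_{l^1\to l^2}\leq 1$, $\n{\sf T}{\sf S}_0\n_{l^1\to l^2}\leq 1$) and the commutator estimates of Lemma~\ref{comnatu} (items (5), (6)), to a product where $H-P_Nh$ appears only through ${\sf S}(H-P_Nh)$, ${\sf T}{\sf S}(H-P_Nh)$ or through $H-P_Nh$ at regularity $s-1$. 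The first term $(U-P_Nu)\partial_x P_Nh$ of $R'$ is handled straightforwardly by the algebra $H^{s,0}$ ($=l^2(H^s_x)$) estimate of Lemma~\ref{comnatu}(2) plus Lemma~\ref{op P_N}, producing $\n U-P_Nu\n_{H^{s,0}}\n\partial_x h\n_{\infty,s,0}$. For ${\sf T}{\sf S}R'$ one uses the identity $\sqrt N{\sf P}\,{\sf S}=\{{\sf T}{\sf S}\}$ as in Lemma~\ref{restes} to reduce to the same estimates. Collecting all contributions yields the four bounds stated; the constant $C$ is uniform in $N$ and $\kappa$ precisely because every tool invoked (Lemmas~\ref{A1}, \ref{contop}, \ref{comnatu}, \ref{AlgebreBanach}, \ref{op P_N}) has an $N$-independent constant.
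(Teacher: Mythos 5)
Your proposal is correct and follows essentially the same route as the paper: items (1), (3), (4) are direct consequences of the product/algebra estimates (the paper cites Lemma \ref{IL} and Lemma \ref{comnatu}(2) directly rather than the packaged Lemma \ref{AlgebreBanach}, but these are the same ingredients) combined with Lemma \ref{op P_N}, and item (2) is handled exactly as you describe, by Abel's summation (Lemma \ref{IL}(1)) to commute ${\sf S}$ (resp.\ ${\sf T}{\sf S}$) past the product so that $H-P_Nh$ only ever appears through ${\sf S}(H-P_Nh)$, ${\sf T}{\sf S}(H-P_Nh)$ at regularity $s$ or through $H-P_Nh$ at regularity $s-1$, followed by the commutator estimates of Lemma \ref{comnatu} and the continuity bounds of Lemma \ref{contop}.
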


\begin{proof} The proof of 1., 3., and 4. 
follows immediately from Lemma \ref{IL}, Lemma \ref{comnatu} $(2)$ and Lemma  \ref{op P_N}. For 2. we use  Lemma \ref{IL} (1) to infer the identity valid for $0\leq\alpha\leq s$
\begin{align*}
\partial_x^\alpha{\sf S} \mathrm{R}'=&{\sf S}\partial_x^\alpha\bigg((U-P_Nu)(\partial_xP_Nh)\bigg)-(\partial_xP_Nu)({\sf S}\partial_x^\alpha(H-P_Nh))\\
&+{\sf S}_0\left((\Dp(\partial_xP_Nu))(\ru{\sf S}\partial_x^\alpha(H-P_Nh))\right)-{\sf S}\left([\partial_x^\alpha,\partial_xP_Nu](H-P_Nh)\right).
\end{align*}
The estimate on $\n {\sf S}R'\n_{H^{s,0}}$ follows from Lemma \ref{contop}, Lemma \ref{comnatu} $(2)$, and Lemma  \ref{op P_N}. The estimate for $\n {\sf T}{\sf S}R'\n_{H^{s,0}}$ follows in the same way from the analogous identity.
\end{proof}

\begin{lemma}\label{diffquo}
Let $h_*, \underline{M}, M_{\rm c}, M>0$, $s\in\N$ such that  $s> 2+\frac{1}{2}$. There exists ${C(h_*^{-1}, \underline{M},M_{\rm c},M)>0}$ such that for any $N\in\N^*$, and any  $\underline{h}\in W^{2,\infty}((\rho_{\surf},\rho_{\bott}))$, $\underline{H}=P_N(\underline{h})$, and any $h$ and $H$ sufficiently smooth respectively on $\R$ and $\Omega:=(\rho_{\surf},\rho_{\bott})\times\R$, satisfying  
\begin{equation*}
\br\underline{H}\br_{w^{2,\infty}}\leq \underline{M},\quad \n H\n_{H^{s-1,1}}\leq M,\quad \n h\n_{\infty,s-1,1}\leq M_{\rm c},
\end{equation*}
\begin{equation*}
\underset{(i,x)\in\{1,\cdots,N\}\times \mathbb{R}}{\inf}\underline{H}_i+H_i(x)\geq h_*,\: \underset{(x,\varrho)\in\R\times (\rho_{\surf},\rho_{\bott})}{\inf}\underline{h}(\varrho)+h(x,\varrho)\geq h_*,
\end{equation*} 
one has
\begin{align*}\bigg\| &\frac{\partial_xH}{\underline{H}+H}-P_N\bigg(\frac{\partial_xh}{\underline{h}+h}\bigg)\bigg\|_{H^{s,2}}\leq C(h_*^{-1}, \underline{M}, M_{\rm c},M)\times\\
&\hspace{1cm}\bigg(\n\partial_x(H-P_Nh)\n_{H^{s,2}}+\n H-P_Nh\n_{H^{s,2}}(\n H\n_{H^{s,2}}+\n \partial_xh\n_{\infty,s-1,1})\\
&\hspace{1cm}+\n H-P_Nh\n_{H^{s-1,1}}\n \partial_xh\n_{\infty,s,2}+\n H-P_Nh\n_{H^{s-1,1}}\n \partial_x h\n_{\infty, s-1,1}(\n H\n_{H^{s,2}}+\n h\n_{\infty,s,2})\bigg).
\end{align*}
\end{lemma}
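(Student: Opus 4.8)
The plan is to reduce the quantity
$\frac{\partial_xH}{\underline H+H}-P_N\big(\frac{\partial_xh}{\underline h+h}\big)$
to a combination of differences $H-P_Nh$ (and their derivatives) multiplied by functions that are bounded in the relevant $H^{s,k}$ and $\infty,s,k$ norms, using the commutation $P_N(fg)=P_N(f)P_N(g)$ and the algebra and composition lemmas already established. First I would write, with $\underline H=P_N\underline h$ so that $\underline H+P_Nh=P_N(\underline h+h)$,
\[
\frac{\partial_xH}{\underline H+H}-P_N\Big(\frac{\partial_xh}{\underline h+h}\Big)
=\frac{\partial_xH}{\underline H+H}-\frac{\partial_xP_Nh}{\underline H+P_Nh}
=\partial_x(H-P_Nh)\,\frac1{\underline H+H}
+\partial_xP_Nh\Big(\frac1{\underline H+H}-\frac1{\underline H+P_Nh}\Big),
\]
and then expand the second difference of reciprocals as
$\frac1{\underline H+H}-\frac1{\underline H+P_Nh}=-(H-P_Nh)\,\frac1{\underline H+H}\,\frac1{\underline H+P_Nh}$.
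This exhibits the target as a sum of two products, one involving $\partial_x(H-P_Nh)$ and one involving $H-P_Nh$ itself, each multiplied by factors of the form $\frac1{\underline H+H}$, $\frac1{\underline H+P_Nh}$, and $\partial_xP_Nh$.

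Next I would control each factor in $H^{s,2}$ (and in $H^{s-1,1}$ when the tame splitting of Lemma \ref{AlgebreBanach} $(2)$ is needed). For $\frac1{\underline H+H}$ I invoke Lemma \ref{composHsk}, writing $\frac1{\underline H+H}=\frac1{\underline H}+\big(\frac1{\underline H+H}-\frac1{\underline H}\big)$; the $\frac1{\underline H}$ part is handled by Lemma \ref{propD} and the bound $\br\underline H\br_{w^{2,\infty}}\leq\underline M$, while the remainder is $\lesssim C(h_*^{-1},\underline M,M)\n H\n_{H^{s,2}}$ (respectively $\n H\n_{H^{s-1,1}}$). For $\frac1{\underline H+P_Nh}=P_N\big(\frac1{\underline h+h}\big)$, I apply Lemma \ref{op P_N} together with the continuous composition estimate in $X^{\infty,s,2}(\Omega)$ (as in Lemma A.4 of \cite{DB}), which gives a bound by $C(h_*^{-1},\n h\n_{\infty,s-1,1})(1+\n h\n_{\infty,s,2})$, hence ultimately by $C(h_*^{-1},\underline M,M_{\rm c})$ plus a term controlled by $\n h\n_{\infty,s,2}$; a similar treatment applies to $\partial_xP_Nh$ via Lemma \ref{op P_N}, giving $\n\partial_xP_Nh\n_{H^{s,2}}\lesssim\n\partial_xh\n_{\infty,s,2}$ and $\n\partial_xP_Nh\n_{H^{s-1,1}}\lesssim\n\partial_xh\n_{\infty,s-1,1}$.

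Then I would multiply out using the Banach-algebra estimate Lemma \ref{AlgebreBanach} $(1)$ for products of three $H^{s,2}$ factors, but switch to the tame version Lemma \ref{AlgebreBanach} $(2)$ precisely on the factor carrying $H-P_Nh$ or $\partial_x(H-P_Nh)$ so that the ``low'' norm $\n H-P_Nh\n_{H^{s-1,1}}$ appears multiplied by the ``high'' norms $\n H\n_{H^{s,2}}$, $\n h\n_{\infty,s,2}$, $\n\partial_xh\n_{\infty,s,2}$, while the ``high'' norm $\n\partial_x(H-P_Nh)\n_{H^{s,2}}$ or $\n H-P_Nh\n_{H^{s,2}}$ is multiplied only by bounded (constant-order) factors. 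Collecting the resulting terms, and noting that the first product contributes $\n\partial_x(H-P_Nh)\n_{H^{s,2}}$ times a constant, and the second product contributes $\n H-P_Nh\n_{H^{s,2}}$ times $(\n H\n_{H^{s,2}}+\n\partial_xh\n_{\infty,s-1,1})$ plus $\n H-P_Nh\n_{H^{s-1,1}}$ times $\n\partial_xh\n_{\infty,s,2}$ plus a cross term $\n H-P_Nh\n_{H^{s-1,1}}\n\partial_xh\n_{\infty,s-1,1}(\n H\n_{H^{s,2}}+\n h\n_{\infty,s,2})$, yields exactly the stated bound after absorbing all constants into $C(h_*^{-1},\underline M,M_{\rm c},M)$, which is non-decreasing in its arguments.

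I expect the main obstacle to be \emph{bookkeeping of which norm attaches to which factor} in the product estimates: one must be careful to always peel off the low-regularity difference-norm $\n H-P_Nh\n_{H^{s-1,1}}$ against high-regularity background norms (never creating a $\n H-P_Nh\n_{H^{s,2}}\cdot\n\cdot\n_{H^{s,2}}$ with a large background factor), and to track the nested composition estimate for $P_N(1/(\underline h+h))$ so that the $\n h\n_{\infty,s,2}$ dependence only shows up in the lower-order cross term rather than multiplying $\n\partial_x(H-P_Nh)\n_{H^{s,2}}$. The analytic inputs themselves (Lemmas \ref{propD}, \ref{AlgebreBanach}, \ref{composHsk}, \ref{op P_N}, and the composition lemma from \cite{DB}) are all in place, so no genuinely new estimate is required; the proof is essentially a careful assembly.
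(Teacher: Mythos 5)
Your proposal is correct and follows essentially the same route as the paper: the same algebraic splitting (factor out $\partial_x(H-P_Nh)$ and $H-P_Nh$, then expand each reciprocal as $\tfrac1{\underline H}$ plus the $H^{s,2}$-controlled difference $\tfrac1{\underline H+\cdot}-\tfrac1{\underline H}$, exactly reproducing the paper's two displayed identities), combined with Lemma \ref{composHsk}, Lemma \ref{op P_N}, and the tame product estimate of Lemma \ref{AlgebreBanach}~(2) to attach the low norm $\n H-P_Nh\n_{H^{s-1,1}}$ to the high-regularity background factors. Your attention to splitting off $\tfrac1{\underline H}$ before taking $H^{s,2}$ norms (since $\tfrac1{\underline H+H}$ itself is not in $H^{s,2}$) is precisely the point of the paper's decomposition.
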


\begin{proof}
We notice that 
\begin{align*}
&\frac{\partial_xH}{\underline{H}+H}-P_N\bigg(\frac{\partial_xh}{\underline{h}+h}\bigg)=\partial_x(H-P_Nh)\left(\frac{1}{\underline{H}+H}-\frac{1}{\underline{H}} \right)+\frac{\partial_x(H-P_Nh)}{\underline{H}}\\
&\hspace{2cm}+\frac{\partial_xP_Nh}{\underline{H}^2}\left(P_Nh-H\right)+\partial_xP_Nh\left(\frac{1}{(\underline{H}+H)(\underline{H}+P_Nh)}-\frac{1}{\underline{H}^2} \right)(P_Nh-H),
\end{align*}
and
\begin{align*}
\frac{1}{(\underline{H}+H)(\underline{H}+P_Nh)}-\frac{1}{\underline{H}^2}=&\left(\frac{1}{\underline{H}+H}-\frac{1}{\underline{H}}\right)\left(\frac{1}{\underline{H}+P_Nh}-\frac{1}{\underline{H}}\right)\\
&\hspace{2,5cm}+\frac{1}{\underline{H}}\left[\left(\frac{1}{\underline{H}+H}-\frac{1}{\underline{H}}\right)+\left(\frac{1}{\underline{H}+P_Nh}-\frac{1}{\underline{H}}\right)\right].
\end{align*}

Since $s>2+\frac{1}{2}$, using Lemma \ref{AlgebreBanach} (2), Lemma \ref{composHsk} and Lemma \ref{op P_N} we obtain the desired estimate.
\end{proof}

We can now collect all estimates on remainders which will be used in the proof of our second main result, namely Theorem \ref{mainth}.
\begin{lemma}\label{restdiff}
Let $\rho_{\bott}>\rho_{\surf}>0$,  $s\in\N$, $s>2+\frac{1}{2}$, $h_*,\underline{M}, M_{\rm c},M>0$. Then there exists $C>0$ such that for any $N\in\N^*$, $\kappa\in(0,1]$, and any $(h,u)$ solution to \eqref{contsystem} with $\underline{h},\underline{u}\in W^{2,\infty}((\rho_{\surf},\rho_{\bott}))$  and $(H,U)$ solution to \eqref{multicouches} with $\underline{H}=P_N\underline{h},\underline{U}=P_N\underline{u}$, assuming that these solutions both exist on a time interval $[0,T)$ with $T>0$, and that the following estimates hold for any $t\in [0,T)$
\begin{align*}
&\br\boldsymbol{\rho}\br_{l^\infty}  +\br\boldsymbol{\rho}^{-1}\br_{l^\infty} + \br\underline{H}\br_{w^{2,\infty}}+\br\underline{U}\br_{w^{2,\infty}}+\br\underline{h}\br_{W^{2,\infty}}+\br\underline{u}\br_{W^{2,\infty}}\leq \underline{M},\\
&\n H(t,\cdot)\n_{H^{s-1,1}}+\n U(t,\cdot)\n_{H^{s,2}}+\kappa^\frac{1}{2}\n H(t,\cdot)\n_{H^{s,2}}\leq M,\\
&\n h(t,\cdot)\n_{\infty,s+1,2}+\n \partial_x u(t,\cdot)\n_{\infty,s,2}\leq M_{\rm c},
\end{align*}
\begin{equation*}
\underset{(i,x)\in\{1,\cdots,N\}\times \mathbb{R}}{\inf}\underline{H}_i+H_i(t,x)\geq h_*,\: \underset{(x,\varrho)\in\R\times [\rho_{\surf},\rho_{\bott}]}{\inf}\underline{h}(\varrho)+h(t,x,\varrho)\geq h_*,
\end{equation*} 
then the following holds.
\begin{itemize}
\item For all $\alpha\in\N$, $j\in \{0,1\}$ with $0\leq\alpha\leq s-1-j$, we have
\begin{align*}
\partial_t\partial_x^\alpha\Dp^j(H-P_Nh)+({\sf M}^{j}(\underline{U}+U))\partial_x\partial_x^\alpha\Dp^j(H-P_Nh)=\kappa\partial_x^2\partial_x^\alpha\Dp^j(H-P_Nh)+\mathcal{R}_{\alpha,j}'
\end{align*}

where for every $t\in[0,T]$, $\mathcal{R}_{\alpha,j}'(t)\in l^2(L^2(\R))$ and
\begin{align*}
\n\mathcal{R}_{\alpha,j}'(t,\cdot)\n_{l^2(L^2_x)}\leq& C(\n U-P_Nu\n_{H^{s,2}}+\n H-P_Nh\n_{H^{s-1,1}}).
\end{align*}

\item For all $\alpha\in\N$ with $0\leq\alpha\leq s$, we have
\begin{align*}
&\partial_t\partial_x^\alpha(H-P_Nh)+(\underline{U}+U)\partial_x\partial_x^\alpha(H-P_Nh)
+(\underline{H}+H)\,\partial_x\partial_x^\alpha (U-P_Nu)=\kappa\partial_x^2\partial_x^\alpha(H-P_Nh)+ R_{\alpha,0}',
\\&\partial_t\partial_x^\alpha( U-P_Nu)+\left(\underline{U}+U-\kappa\frac{\partial_xH}{\underline{H}+H}\right)\partial_x \partial_x^\alpha( U-P_Nu)+{\sf \Gamma} \partial_x\partial_x^\alpha(H-P_Nh)=\textbf{R}_{\alpha,0}',
\end{align*}
where for every $t\in[0,T]$, $({\sf S} R_{\alpha,0}'(t),{\sf T}{\sf S} R_{\alpha,0}'(t),\textbf{R}_{\alpha,0}'(t))\in l^2(L^2(\R))^3$ and 
\begin{align*}
\n {\sf S}R_{\alpha,0}'(t,\cdot)&\n_{l^2(L^2_x)}+\n {\sf T}{\sf S}R_{\alpha,0}'(t,\cdot)\n_{l^2(L^2_x)}\leq\\&C(\n U-P_Nu\n_{H^{s,1}}+\n {\sf S}(H-P_Nh)\n_{H^{s,0}}+\n {\sf T}{\sf S}(H-P_Nh)\n_{H^{s,0}}+\n H-P_Nh\n_{H^{s-1,0}}).
\end{align*}
\begin{align*}
\n \boldsymbol{R}_{\alpha,0}'\n_{l^2(L^2_x)}&\leq \n \boldsymbol{R}_N\n_{H^{s,2}}+C(1+\kappa\n\partial_xH\n_{H^{s,1}}) \n U-P_Nu\n_{H^{s,1}}\\
&\quad +C\big(\n H-P_Nh\n_{H^{s-1,1}} + \kappa^\frac{1}{2}\n H-P_Nh\n_{H^{s,2}} +\kappa\n\partial_x(H-P_Nh)\n_{H^{s,2}}\big).
\end{align*}

\item For any $\alpha\in \N$, $j\in\{1,2\}$ such that $0\leq\alpha\leq s-j$, it holds
\begin{align*}
&\partial_t \Dp^j {\sf S}\partial_x^\alpha(H-P_Nh)+\rd({\sf M}^{j-1}(\underline{U}+U))\Dp^j {\sf S}(\partial_x^\alpha\partial_x(H-P_Nh))\\
&\hspace{7cm}=\kappa\partial_x^2(\partial_x^\alpha \Dp^j{\sf S}(H-P_Nh))+R_{\alpha,j}'\\
&\partial_t\Dp^j\partial_x^\alpha (U-P_Nu)+ {\sf M}^j\left(\underline{U}+U-\kappa\frac{\partial_x H}{\underline{H}+H}\right)\partial_x (\Dp^j\partial_x^\alpha (U-P_Nu))=\textbf{R}_{\alpha,j}'
\end{align*}
where for every $t\in[0,T]$, $( R_{\alpha,j}'(t),\textbf{R}_{\alpha,j}'(t))\in l^2(L^2(\R))\times l^2(L^2(\R))$ and 
\begin{align*}
\n R_{\alpha,j}'(t,\cdot)\n_{l^2(L^2_x)}\leq&C(\n U-P_Nu\n_{H^{s,2}}+\n H-P_Nh\n_{H^{s-1,1}}).
\end{align*}
\begin{align*}
\n \boldsymbol{R}'_{\alpha,j}\n_{l^2(L^2_x)}\leq&\n \boldsymbol{R}_N\n_{H^{s,2}}+C(1+\kappa\n\partial_xH\n_{H^{s,2}}) \n U-P_Nu\n_{H^{s,2}}\\
&+C(\n{\sf T}{\sf S}(H-P_Nh)\n_{H^{s,0}}+\n {\sf S}(H-P_Nh)\n_{H^{s,1}})\\
&+C\big(\n H-P_Nh\n_{H^{s-1,1}} + \kappa^\frac{1}{2}\n H-P_Nh\n_{H^{s,2}} +\kappa\n\partial_x(H-P_Nh)\n_{H^{s,2}}\big).
\end{align*}

\item For any $\alpha\in \N$, $j\in\{0,1,2\}$, $0\leq\alpha\leq s-j$ , it holds
\begin{align*}
&\partial_t \Dp^j\partial_x^\alpha(H-P_Nh)+{\sf M}^j(\underline{U}+U)\;\partial_x( \Dp^j\partial_x^\alpha(H-P_Nh))=\kappa\partial_x^2(\partial_x^\alpha  \Dp^j(H-P_Nh))+r_{\alpha,j}'+\partial_x \textbf{r}_{\alpha,j}'
\end{align*}

where for every $t\in[0,T]$, $( r_{\alpha,j}'(t),\textbf{r}_{\alpha,j}'(t))\in l^2(L^2(\R))\times l^2(L^2(\R))$ and 
\begin{align*}
\n r_{\alpha,j}'\n_{l^2(L^2_x)}&\leq C \n H-P_Nh\n_{H^{s,2}},\\
\n\mathbf{r}_{\alpha,j}'\n&\leq C (\n U-P_Nu\n_{H^{s,2}}+\n H-P_Nh\n_{H^{s-1,1}}).
\end{align*}
\end{itemize}
Above, we denote by $\boldsymbol{R}_N$ the term defined in \eqref{R_N}.

\end{lemma}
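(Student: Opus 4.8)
The plan is to derive each of the four families of linearized equations by taking the difference of the equations satisfied by $(H,U)$ (namely \eqref{multicouches}) and those satisfied by $(P_Nh,P_Nu)$ (namely \eqref{smoothsoldiscretised}), then to collect the resulting remainder terms and estimate them with the product, commutator and composition estimates assembled in Section \ref{section3}. First, subtracting the two mass-conservation equations, the transport coefficient $\underline U+U$ appears naturally in front of $\partial_x(H-P_Nh)$, and the mismatch $(U-P_Nu)\partial_xP_Nh+(H-P_Nh)\partial_xP_Nu$ together with the difference of the $P_N$-projected nonlinearity (which is exactly $R'$ from Lemma \ref{petitsr}) is moved to the right-hand side; similarly subtracting the velocity equations produces the coefficient $\underline U+U-\kappa\frac{\partial_xH}{\underline H+H}$ in front of $\partial_x(U-P_Nu)$, the discrete Montgomery term $\mathsf{\Gamma}\partial_x(H-P_Nh)$, and a remainder consisting of $\boldsymbol{R}'$ from Lemma \ref{petitsr}, the consistency defect $\boldsymbol{R}_N$ coming from \eqref{smoothsoldiscretised}, and the difference of the diffusive effective-velocity terms controlled by Lemma \ref{diffquo}. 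This mirrors, line by line, the structure already produced in Lemma \ref{restes}, the only new inputs being the three auxiliary Lemmas \ref{petitsr}, \ref{diffquo} and \ref{ResteN}.

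Second, I would obtain the higher-order equations exactly as in the proof of Lemma \ref{restes}: apply $\partial_x^\alpha\Dp^j$ (for the $H-P_Nh$ transport-diffusion family), $\partial_x^\alpha$ together with $\mathsf S$, $\mathsf{TS}$ (for the coupled system, using Abel's summation from Lemma \ref{IL} to commute $\mathsf S$ past products, which is where the $\mathsf S_0$ and $\ru$ terms appear), $\Dp^j\mathsf S\partial_x^\alpha$ respectively $\Dp^j\partial_x^\alpha$ (for the reduced families, using $\Dp\mathsf S=\rd$ and $\Ds\mathsf S=\rd\Dp$), and finally $\Dp^j\partial_x^\alpha$ in conservative form (splitting the remainder into $r'_{\alpha,j}+\partial_x\mathbf r'_{\alpha,j}$). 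Each time, the commutators $[\partial_x^\alpha,\cdot]$, $\llbracket\partial_x^\alpha\Dp^j,\cdot\rrbracket$ and their symmetric versions are introduced so that the transport coefficient $\mathsf M^j(\underline U+U)$ (or its reduced analogue) survives on the left, and everything else lands in the remainder. The key decomposition \eqref{rhogammadecomp} of $\mathsf{\Gamma}$ is used to handle $\Dp^j(\mathsf{\Gamma}\partial_x(H-P_Nh))$, producing the $\mathsf{TS}(H-P_Nh)$ and $\mathsf S(H-P_Nh)$ norms in the bounds for $\boldsymbol{R}'_{\alpha,0}$ and $\boldsymbol{R}'_{\alpha,j}$, precisely as in the corresponding step of Lemma \ref{restes}.

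Third, the estimates on the remainders themselves are obtained by applying: Lemma \ref{comnatu} (product and commutator estimates) and Lemma \ref{comutgen} (the $\llbracket\cdot\rrbracket$-commutator estimates) to the terms linear in the difference $(H-P_Nh,U-P_Nu)$ whose coefficients are controlled by the assumed bounds on $(\underline H,\underline U,H,U)$ and $(h,u)$; Lemma \ref{op P_N} to convert norms of $P_Nh$, $P_Nu$ and their derivatives into the continuous norms $\n\cdot\n_{\infty,s,k}$, which by hypothesis are bounded by $M_{\rm c}$; Lemma \ref{petitsr} for the terms $R'$, $\boldsymbol{R}'$; Lemma \ref{diffquo} together with Lemma \ref{composHsk} and Lemma \ref{AlgebreBanach} for the diffusive difference $\frac{\partial_xH}{\underline H+H}-P_N\big(\frac{\partial_xh}{\underline h+h}\big)$; and, crucially, the fact that $\underline H=P_N\underline h$ so that the background terms in the difference vanish identically and do not contribute. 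The factor $\kappa\le 1$ is used to absorb several $\kappa$-weighted terms into the stated bounds. Since $s>2+\tfrac12$, all the regularity thresholds $t_0+\tfrac12$, $t_0+\tfrac32$, $t_0+\tfrac52$ required by Lemmas \ref{comnatu}, \ref{AlgebreBanach}, \ref{comutgen}, \ref{composHsk} are met with $t_0$ slightly above $\tfrac12$, so no borderline case arises.

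The main obstacle, as in Lemma \ref{restes}, is purely bookkeeping rather than conceptual: one must track a large number of remainder terms through the $\mathsf S$/$\mathsf{TS}$/$\Dp^j$ splittings and choose for each term the correct instance $(p,q)\in\{(2,\infty),(\infty,2)\}$ in Lemma \ref{comnatu} so that the half-derivative loss in Lemma \ref{A1} is always charged to the factor that can afford it (typically $U$ or $h$, not $H-P_Nh$ at its top regularity). The only genuinely new point compared to Lemma \ref{restes} is that the diffusive term now involves a \emph{difference} of two nonlinear quotients, which is why Lemma \ref{diffquo} is isolated beforehand; once it is available, the estimate for $\boldsymbol{R}'_{\alpha,0}$ and $\boldsymbol{R}'_{\alpha,j}$ follows by the same manipulations as for $\textbf{R}_{\alpha,0}$ and $\textbf{R}_{\alpha,j}$ in Lemma \ref{restes}. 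I would therefore present the proof by indicating, for each of the four bullets, which operator is applied, which identities from Lemma \ref{IL} are used to move that operator past the nonlinearities, and which of Lemmas \ref{contop}--\ref{op P_N}, \ref{petitsr}, \ref{diffquo} bounds each resulting remainder, leaving the routine term-by-term verification to the reader exactly as is done at the end of the proof of Lemma \ref{restes}.
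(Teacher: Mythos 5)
Your proposal is correct and follows essentially the same route as the paper: form the difference system with remainders $R'$, $\boldsymbol{R}'$ and $-\boldsymbol{R}_N$, apply the same operators and Leibnitz/Abel identities as in Lemma \ref{restes}, and bound the new terms via Lemmas \ref{petitsr}, \ref{diffquo} and \ref{op P_N} (note only that the paper leaves $\n\boldsymbol{R}_N\n_{H^{s,2}}$ unestimated in this lemma, deferring Lemma \ref{ResteN} to the proof of Theorem \ref{mainth}).
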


\begin{proof} 
Under the hypothesis of the Lemma, $(P_Nh,P_Nu)$ is a solution to $($\ref{smoothsoldiscretised}$)$, consequently $(H-P_Nh,U-P_Nu)$ satisfy the following system
\begin{equation}\label{eqofdifference}
\left\{\begin{array}{l}
\partial_t(H-P_Nh)+(\underline{U}+U)\partial_x(H-P_Nh)+(\underline{H}+H)\partial_x(U-P_Nu)=\kappa\partial^2_x(H-P_Nh)+R',\\ 
\partial_t(U-P_Nu)+\left(\underline{U}+U-\kappa\frac{\partial_xH}{\underline{H}+H}\right)\partial_x(U-P_Nu)+{\sf \Gamma}\partial_x(H-P_Nh)=-\boldsymbol{R}_N+\boldsymbol{R}',
\end{array}     
\right.
\end{equation}
where \begin{align*}
\boldsymbol{R}_N&={\sf \Gamma}\partial_xP_Nh-P_N\left(\frac{1}{\varrho}\partial_x\psi\right),\\
R'&=-(U-P_Nu)\partial_xP_Nh-(H-P_Nh)\partial_xP_Nu,\\
\boldsymbol{R}'&=-\left((U-P_Nu)-\kappa\left(\frac{\partial_xH}{\underline{H}+H}-P_N\left(\frac{\partial_xh}{\underline{h}+h}\right)\right)\right)\partial_xP_Nu.
\end{align*}
The contribution from $\boldsymbol{R}_N$ is trivial in this proof. We have estimated $R'$ and $\boldsymbol{R}'$ in Lemma \ref{petitsr} and Lemma \ref{diffquo}. The remaining contributions are estimated following the same steps as in Lemma \ref{restes}.\\

\noindent Estimates for $ \mathcal{R}_{\alpha,j}'$,  for all $\alpha\in\N$, $j\in \{0,1\}$ with $0\leq\alpha\leq s-1-j$, we have
\begin{align*}
\mathcal{R}_{\alpha,j}'=\partial_x^\alpha\Dp^j \mathrm{R}'-\llbracket\partial_x^\alpha\Dp^j,\underline{U}+U\rrbracket\partial_x(H-P_Nh)-\partial_x^\alpha\Dp^j\left((\underline{H}+H)\partial_x(U-P_Nu)\right),
\end{align*}
we obtain an estimate of the above by using the same estimates as $\mathcal{R}_{\alpha,j}$ (adapted to our case) in Lemma \ref{restes} and then applying Lemma \ref{petitsr} and Lemma \ref{op P_N}, consequently we have
\begin{align*}
\n\mathcal{R}_{\alpha,j}'(t,\cdot)\n_{l^2(L^2_x)}\lesssim&(\n \partial_xh\n_{\infty,s-1,1}+\br\underline{H}\br_{w^{1,\infty}}+\n H\n_{H^{s-1,1}})\n U-P_Nu\n_{H^{s,2}}\\
&+(\n \partial_xu\n_{\infty,s-1,1}+\br\Dp\underline{U}\br_{l^\infty}+\n U\n_{H^{s,2}})\n H-P_Nh\n_{H^{s-1,1}}.
\end{align*}
Estimates for ${\sf S}R_{\alpha,0}'$, ${\sf T}{\sf S}R_{\alpha,0}'$ and $\boldsymbol{R}_{\alpha,0}'$ with $0\leq\alpha\leq s$.\\

For all $\alpha\in\N$ with $0\leq\alpha\leq s$, using Abel’s summation Lemma \ref{IL}  we have
\begin{align*}
{\sf S}R_{\alpha,0}'=&\partial_x^\alpha{\sf S}R'-[\partial_x^\alpha,U,\partial_x{\sf S}(H-P_Nh)]-(\partial_x(U-P_Nu))(\partial_x^\alpha {\sf S}H)\\
&+{\sf S}_0\bigg([\partial_x^\alpha;\Dp U,\ru\partial_x{\sf S}(H-P_Nh)]+(\Dp(\partial_x(U-P_Nu)))(\ru{\sf S}\partial_x^\alpha H)\bigg)\\
&-{\sf S}\bigg((\partial_x^\alpha U)(\partial_x(H-P_Nh))+[\partial_x^\alpha;H ,\partial_x(U-P_Nu)]\bigg),
\end{align*}
\begin{align*}
{\sf T}{\sf S}R_{\alpha,0}'=&\partial_x^\alpha{\sf T}{\sf S}R'-[\partial_x^\alpha,U,\partial_x{\sf T}{\sf S}(H-P_Nh)]-(\partial_x(U-P_Nu))(\partial_x^\alpha {\sf T}{\sf S}H)\\
&+{\sf T}{\sf S}_0\bigg([\partial_x^\alpha;\Dp U,\ru\partial_x{\sf S}(H-P_Nh)]+(\Dp(\partial_x(U-P_Nu)))(\ru{\sf S}\partial_x^\alpha H)\bigg)\\
&-{\sf T}{\sf S}\bigg((\partial_x^\alpha U)(\partial_x(H-P_Nh))+[\partial_x^\alpha;H ,\partial_x(U-P_Nu)]\bigg),
\end{align*}
and 
\begin{align*}
\boldsymbol{R}_{\alpha,0}'=&-\partial_x^\alpha\boldsymbol{R}_N+\partial_x^\alpha\boldsymbol{R}'-[\partial_x^\alpha,U]\partial_x(U-P_Nu)+\frac{\kappa}{\underline{H}+H}[\partial_x^\alpha,\partial_xH]\partial_x(U-P_Nu)\\&+
\kappa\left[\partial_x^\alpha,\frac{1}{\underline{H}+H}\right]\big((\partial_xH)(\partial_x(U-P_Nu))\big).
\end{align*}
We obtain an estimate using the previous identities by using the same estimates of ${\sf S}R_{\alpha,0}$, ${\sf T}{\sf S}R_{\alpha,0}$ and $\boldsymbol{R}_{\alpha,0}$ (adapted to our case) in Lemma \ref{restes}. Moreover, applying Lemma \ref{petitsr} we find
\begin{align*}
\n {\sf S}R_{\alpha,0}'(t,\cdot)&\n_{l^2(L^2_x)}+\n {\sf T}{\sf S}R_{\alpha,0}'(t,\cdot)\n_{l^2(L^2_x)}\\&\lesssim(\n\partial_x h\n_{\infty,s,0}+\n {\sf T}{\sf S}H\n_{H^{s,0}}+\n {\sf S}H\n_{H^{s,0}}+\n H\n_{H^{s-1,0}})\n U-P_Nu\n_{H^{s,1}}\\&\hspace{0,5cm}+(\n\partial_x u\n_{\infty,s-1,1}+\n U\n_{H^{s,1}})(\n {\sf S}(H-P_Nh)\n_{H^{s,0}}+\n {\sf T}{\sf S}(H-P_Nh)\n_{H^{s,0}})\\
&\hspace{0,5cm}+(\n\partial_x u\n_{\infty,s,0}+\n U\n_{H^{s,0}})\n H-P_Nh\n_{H^{s-1,0}}.
\end{align*}
In the same way, and  using additionally Lemma \ref{diffquo}, we find
\begin{align*}
\n &\boldsymbol{R}_{\alpha,0}'\n_{l^2(L^2_x)}\lesssim \n \boldsymbol{R}_N\n_{H^{s,2}}\\&
+C(h_*^{-1},\n H\n_{H^{s-1,1}})(\n U\n_{H^{s,1}}+\n \partial_x u\n_{\infty,s,0}+\kappa(\n H\n^2_{H^{s,1}}+\n\partial_xH\n_{H^{s,1}}))\times \n U-P_Nu\n_{H^{s,1}}\\
&+C(h_*^{-1},\underline{M},M_{\rm c},M) \kappa \n \partial_xu\n_{\infty,s,0}\\
&\times\bigg(\n\partial_x(H-P_Nh)\n_{H^{s,2}}+\n H-P_Nh\n_{H^{s,2}}(\n H\n_{H^{s,2}}+\n \partial_xh\n_{\infty,s-1,1})\\
&\hspace{0,5cm}+\n H-P_Nh\n_{H^{s-1,1}}\n \partial_xh\n_{\infty,s,2}+\n H-P_Nh\n_{H^{s-1,1}}\n \partial_x h\n_{\infty,s-1,1}(\n H\n_{H^{s,2}}+\n h\n_{\infty,s,2})\bigg).
\end{align*}
Estimates for $ R_{\alpha,j}'$, $j\in\{1,2\}$, for any $\alpha\in \N$,  such that $0\leq\alpha\leq s-j$.\\

\noindent Recalling that $\Dp {\sf S}=\rd$ and $\Ds {\sf S}=\rd\Dp $ we have
\begin{align*}
R_{\alpha,1}'&= \rd\partial_x^\alpha R'+\rd\left(-[\partial_x^{\alpha+1},\underline{U}+U]\partial_x^\alpha(H-P_Nh)-\partial_x^\alpha\left((\underline{H}+H)\partial_x(U-P_N u)\right)\right)\\
&=\rd\partial_x^\alpha R'+\rd\left(\partial_x^\alpha((\partial_x U)(H-P_Nh))-\partial_x^\alpha((H)(\partial_x (U-P_Nu)))\right)\\
&\hspace{0,5cm}+\rd\left(-[\partial_x^{\alpha+1}, U](H-P_Nh)-\partial_x^\alpha((\underline{H})(\partial_x(U-P_Nu))\right)
\end{align*}
and
\begin{align*}
R_{\alpha,2}'=&\rd\Dp\partial_x^\alpha R'+\rd\bigg(-\llbracket\partial_x^\alpha\Dp,\underline{U}+U\rrbracket\partial_x(H-P_Nh)-\partial_x^\alpha\Dp\left((\underline{H}+H)(\partial_x (U-P_Nu))\right)\bigg).
\end{align*}
Using Lemma \ref{comnatu} $(1)$ to estimate the second and third term of $R_{\alpha,1}'$, and Lemma \ref{petitsr} to estimate the contributions of $R'$ and the same estimates as $R_{\alpha,1}$, $R_{\alpha,2}$  (adapted to our case) in Lemma \ref{restes} for the remaining contributions, we find
\begin{align*}
\n R_{\alpha,j}'(t,\cdot)\n_{l^2(L^2_x)}\lesssim&(\n\partial_x h\n_{\infty,s-1,1}+\n H\n_{H^{s-1,1}}+\br\underline{H}\br_{w^{1,\infty}})\n U-P_Nu\n_{H^{s,2}}\\
&+(\n \partial_xu\n_{\infty,s-1,1}+\n U\n_{H^{s,2}}+\br\Dp\underline{U}\br_{l^\infty})\n H-P_Nh\n_{H^{s-1,1}}.
\end{align*}
Estimates for $\boldsymbol{R}_{\alpha,j}'$, for any $\alpha\in \N$, $j\in\{1,2\}$ such that $0\leq\alpha\leq s-j$, we have
\begin{align*}
\boldsymbol{R}_{\alpha,j}'=&-\partial_x^\alpha\Dp^j\boldsymbol{R}_N+\partial_x^\alpha\Dp^j\boldsymbol{R}'-\left\llbracket\partial_x^\alpha\Dp^j,\underline{U}+U-\kappa\frac{\partial_xH}{\underline{H}+H}\right\rrbracket\partial_x(U-P_Nu)\\&
-\partial_x^\alpha\Dp^j\left({\sf \Gamma}\partial_x(H-P_Nh)\right).
\end{align*}
Where ${\sf\Gamma}$ is as in \eqref{rhogammadecomp}.
Again we use the corresponding estimates for $\boldsymbol{R}_{\alpha,j}$ (adapted to our case) in Lemma \ref{restes}, Lemma \ref{petitsr} and Lemma \ref{diffquo} for the contribution of $\boldsymbol{R}'$, and deduce
\begin{align*}
\n \boldsymbol{R}'_{\alpha,j}&\n_{l^2(L^2_x)}\lesssim\n \boldsymbol{R}_N\n_{H^{s,2}}+(\n \partial_xu\n_{\infty,s,2}+\n U\n_{H^{s,2}}+\br\Dp\underline{U}\br_{w^{1,\infty}})\times \n U-P_Nu\n_{H^{s,2}}\\
&+C(\boldsymbol\rho_1,\br \boldsymbol\rho^{-1}\br_{l^\infty})(\n{\sf T}{\sf S}(H-P_Nh)\n_{H^{s,0}}+\n {\sf S}(H-P_Nh)\n_{H^{s,0}})\\
&+C(h_*^{-1},\br\underline{H}\br_{w^{2,\infty}},\n H\n_{H^{s-1,1}})\kappa(\n H\n^2_{H^{s,2}}+\n\partial_xH\n_{H^{s,2}})\times \n U-P_Nu\n_{H^{s,2}}\\
&+C(h_*^{-1},\underline{M},M_{\rm c},M)\kappa\n \partial_x u\n_{\infty,s,2}\\
&\times\bigg(\n\partial_x(H-P_Nh)\n_{H^{s,2}}+\n H-P_Nh\n_{H^{s,2}}(\n H\n_{H^{s,2}}+\n \partial_xh\n_{\infty,s-1,1})\\
&\hspace{0,5cm}+\n H-P_Nh\n_{H^{s-1,1}}\n \partial_xh\n_{\infty,s,2}+\n H-P_Nh\n_{H^{s-1,1}}\n \partial_x h\n_{\infty,s-1,1}(\n H\n_{H^{s,2}}+\n h\n_{\infty,s,2})\bigg).
\end{align*}

\noindent Estimates for $ r_{\alpha,j}'$ and $\mathbf{r}_{\alpha,j}'$, for any $\alpha\in \N$, $j\in\{0,1,2\}$, $0\leq\alpha\leq s-j$, we have
\begin{align*}
& r_{\alpha,j}'=-\llbracket\partial^{\alpha+1}\Dp^j;U,H-P_Nh\rrbracket+(\partial_x^\alpha\Dp^jU)({\sf M}^j\partial_x(H-P_Nh)).\\
& \mathbf{r}_{\alpha,j}'=-\llbracket\Dp^j,\underline{U}\rrbracket\partial_x^\alpha(H-P_Nh)-\partial_x^\alpha\Dp^j(\underline{H}(U-P_Nu))-(\partial_x^\alpha\Dp^jU)({\sf M}^j(H-P_Nh))\\
&\hspace{1,5cm}-\partial_x^\alpha\Dp^j((U-P_Nu)P_Nh)).
\end{align*}
The estimate of the last term of $\mathbf{r}_{\alpha,j}'$ is obtained using Lemma \ref{IL} and Lemma \ref{op P_N}. The  remaining contributions are obtained using the estimates of  $r_{\alpha,j}$ and $\mathbf{r}_{\alpha,j}$ (adapted to our case) in Lemma \ref{restes}, and we obtain
\begin{align*}
\n r_{\alpha,j}'\n_{l^2(L^2_x)}&\lesssim\n U\n_{H^{s,2}}\n H-P_Nh\n_{H^{s,2}}.\\
\n\mathbf{r}_{\alpha,j}'\n&\lesssim (\br\underline{H}\br_{w^{2,\infty}}+\n h\n_{\infty,s,2})\n U-P_Nu\n_{H^{s,2}}\\
&\hspace{5cm}+(\br\Dp\underline{U}\br_{w^{1,\infty}}+\n U\n_{H^{s,2}})\n H-P_Nh\n_{H^{s-1,1}}.
\end{align*}
This concludes the proof.
\end{proof}

\subsection{Convergence}\label{section5.3}

In this subsection we state and prove our second main result. Specifically, we show that considering any sufficiently regular solutions to the continuously stratified system \eqref{contsystem} that is bounded and satisfies the non-cavitation assumptions on a given time interval, we can construct corresponding  solutions $(H,U)$ to the multi-layer system \eqref{multicouches} on the same time interval provided that the number of layers $N$ is sufficiently large, and we quantify the convergence rate between the continuously stratified and multi-layer solutions as $N$ goes to infinity.

\begin{theorem}\label{mainth}
Let $\rho_{\bott}>\rho_{\surf}>0$, $s\in\N$ such that $s>2+\frac{1}{2}$,  and $\underline{M}, M_{\rm c},h_*,h^*,\kappa>0$. Moreover, consider $\underline{h},\,\underline{u}\in W^{2,\infty}((\rho_{\surf},\rho_{\bott}))$ such that
\begin{equation*}
\br\underline{h}\br_{W^{2,\infty}}+\br\underline{u}\br_{W^{2,\infty}}\leq \underline{M},
\end{equation*}
 and $(h,u)\in C([0,T];X^{\infty,s+1,2}_{\varrho,x})$ solution to \eqref{contsystem} on a time interval $[0, T]$ with $T>0$  such that for all $t\in[0,T]$
\begin{align}
&\underset{(x,\varrho)\in\R\times (\rho_{\surf},\rho_{\bott})}{\inf}\underline{h}(\varrho)+h(t,x,\varrho)\geq h_*,\quad \underset{(x,\varrho)\in\R\times (\rho_{\surf},\rho_{\bott})}{\sup}\underline{h}(\varrho)+h(t,x,\varrho)\leq h^*\label{noncavitationcont}
\end{align}
and
\begin{equation}
 \n h(t,\cdot)\n_{\infty,s+1,2}+\n u(t,\cdot)\n_{\infty,s+1,2}+\n\partial_\varrho\partial_x\Lambda^sh(t,\cdot)\n_{L^\infty_\varrho(L^2_x)}+\n\partial^2_\varrho\partial_x\Lambda^sh(t,\cdot)\n_{L^\infty_\varrho(L^2_x)}\leq M_{{\rm c}}.\label{contenerguregu}
 \end{equation}
Then there exists $c>0$ and $N_0\in\N^*$ such that for all $N\geq N_0$ and any initial data $(H_0, U_0)\in H^{s}(\R)^{2N}$ satisfying
\begin{equation*}
 \Norm{(H_0-P_Nh(0,\cdot),U_0-P_Nu(0,\cdot))}_s(0)\leq c M_{{\rm c}},
\end{equation*}
the solution to \eqref{multicouches} with $\underline{H}=P_N\underline{h}$,  $\underline{U}=P_N\underline{u}$ and satisfying $(H,U)_{t=0}=(H_0,U_0)$ defined in Theorem \ref{timeindepof} is well-defined on the time interval $[0,T]$ and satisfies for any $t\in[0,T]$
\begin{equation}
\underset{(x,i)\in\R\times\{1,\cdots,N\}}{\inf}\underline{H}_i+(H)_i(t,x)\geq \frac{h_*}{2},\quad \underset{(x,i)\R\times\{1,\cdots,N\}}{\sup}\underline{H}_i+(H)_i(t,x)\leq 2h^*,\label{noncavitationd}
\end{equation}
and
\begin{equation}\label{controldiscsol}
\Norm{(H,U)}_s(t)\leq \alpha (1+\sqrt{\kappa t}) M_{{\rm c}},
\end{equation}
with $\alpha$ a universal constant, and the difference between the two solutions satisfies
\begin{align}\label{controldiffsol}
   &\Norm{(H-P_Nh,U-P_Nu)}_s(t)\leq C\Norm{(H-P_Nh,U-P_Nu)}_s(0)\exp\left(\frac{C}{2}(1+\kappa^{-1}(\n\Dp\underline{U}\n_{l^2}^2+(C M_{{\rm c}})^2)t\right)\nonumber\\&\hspace{6cm}+\frac{C}{2N^2}\int\limits_0^t\exp\bigg(\frac{C}{2}(1+\kappa^{-1}(\n\Dp\underline{U}\n_{l^2}^2+(C M_{{\rm c}})^2)(t-\tau)\bigg)d\tau,
\end{align}
where $\Norm{\cdot}_s$ is defined as in Theorem \ref{timeindepof}, and $C$ depends only on $\rho_{\bott},\rho_{\surf},s,\underline{M}, M_{\rm c},h_*,h^*$ (while $N_0$ and $c$ depend also on $T$ and $\kappa$).
\end{theorem}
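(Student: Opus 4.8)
<br>

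The plan is to combine the consistency result (Lemma~\ref{ResteN}) with the stability machinery developed in Section~\ref{section5.2} via a continuity/bootstrap argument on the time interval $[0,T]$. First I would fix the constants: apply Theorem~\ref{timeindepof} to the initial data $(H_0,U_0)$ to obtain a local solution $(H,U)$ to \eqref{multicouches} with $\underline{H}=P_N\underline h$, $\underline{U}=P_N\underline u$; since $\Norm{(H_0,U_0)}_s(0)\leq \Norm{(H_0-P_Nh(0,\cdot),U_0-P_Nu(0,\cdot))}_s(0)+\Norm{(P_Nh(0,\cdot),P_Nu(0,\cdot))}_s(0)$ and the latter is controlled by $C\n h(0,\cdot)\n_{\infty,s+1,2}+C\n u(0,\cdot)\n_{\infty,s+1,2}\leq CM_{\rm c}$ through Lemma~\ref{op P_N}, choosing $c$ small enough gives $\Norm{(H_0,U_0)}_s(0)\leq M^\star$ with $M^\star\sim M_{\rm c}$. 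Theorem~\ref{timeindepof} then provides $(H,U)$ on $[0,T_N]$ with $T_N^{-1}=C(1+\kappa^{-1}(\n\Dp\underline U\n_{l^2}^2+M_0^2))$, together with \eqref{controldiscsol} and the lower/upper bounds \eqref{noncavitationd} on that interval. This handles \eqref{controldiscsol} and \eqref{noncavitationd} up to showing $T_N\geq T$, which will follow from the difference estimate.

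Next I would set up the bootstrap. Define
\[
T_\star=\sup\Big\{\,0<t\leq\min(T,T_N)\ :\ \Norm{(H-P_Nh,U-P_Nu)}_s(\tau)\leq \Theta\ \text{for all }\tau\in[0,t]\,\Big\},
\]
where $\Theta=\Theta(N)$ is a threshold to be chosen, small enough that on $[0,T_\star]$ the triangle inequality keeps $\Norm{(H,U)}_s\leq CM_{\rm c}$, keeps the non-cavitation bounds $h_*/2\leq \underline H_i+H_i\leq 2h^*$, and makes all the hypotheses of Lemma~\ref{restdiff} (and Lemma~\ref{petitsr}, Lemma~\ref{diffquo}) valid with fixed constants depending only on $\underline M,M_{\rm c},h_*,h^*$. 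On $[0,T_\star]$ the difference $(H-P_Nh,U-P_Nu)$ solves the system \eqref{eqofdifference}. I would then run exactly the same energy argument as in the proof of Theorem~\ref{timeindepof}: apply Lemma~\ref{restdiff} to get the linearized equations with remainders, then feed the $({\sf C}{\sf S},{\sf T}{\sf S})$-part into Lemma~\ref{energy} and the $\partial_x^\alpha\Dp^j$-parts into Lemma~\ref{esttransdiff,trans}, and sum. The key new feature is that every remainder in Lemma~\ref{restdiff} contains, besides terms controlled by $\Norm{(H-P_Nh,U-P_Nu)}_s$ with a constant, the additive contribution $\n\boldsymbol R_N\n_{H^{s,2}}$, which by Lemma~\ref{ResteN} and hypothesis \eqref{contenerguregu} is $\leq C M_{\rm c}/N^2$. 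This produces a Gronwall inequality of the form
\[
\Norm{(H-P_Nh,U-P_Nu)}_s(t)\leq C\,\Norm{(H-P_Nh,U-P_Nu)}_s(0)\,e^{\frac{C}{2}\Phi(t)}+\frac{C}{2N^2}\int_0^t e^{\frac{C}{2}\Phi(t-\tau)}\,d\tau,
\]
with $\Phi(t)=(1+\kappa^{-1}(\n\Dp\underline U\n_{l^2}^2+(CM_{\rm c})^2))t$, which is precisely \eqref{controldiffsol}.

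Finally I would close the bootstrap. The right-hand side of the Gronwall estimate on $[0,T]$ is bounded by $C c M_{\rm c}e^{\frac C2\Phi(T)}+\frac{C}{2N^2}\cdot\frac{2}{C}(e^{\frac C2\Phi(T)}-1)\leq \big(Cc M_{\rm c}+C'/N^2\big)e^{\frac C2\Phi(T)}$, a quantity that goes to $0$ as $c\to0$ and $N\to\infty$ with $\Phi(T)$ fixed (here $\kappa$ and $T$ are fixed, which is why $N_0$ and $c$ depend on $T$ and $\kappa$). Hence one may choose $c$ small and $N_0$ large so that this bound is strictly below $\Theta/2$ on $[0,\min(T,T_\star)]$; a standard continuity argument then forces $T_\star=\min(T,T_N)$ and, via the uniform bound $\Norm{(H,U)}_s\leq CM_{\rm c}$ it guarantees together with Theorem~\ref{timeindepof}'s blow-up criterion, also $T_N\geq T$, so $T_\star=T$. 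This yields the solution on all of $[0,T]$ with the stated estimates. \textbf{The main obstacle} is the bookkeeping in the bootstrap: one must verify that the threshold $\Theta$ can be chosen (uniformly in $N$, for $N$ large) small enough to simultaneously preserve the non-cavitation bounds, the $CM_{\rm c}$-bound on $\Norm{(H,U)}_s$, and all the regularity/size hypotheses required by Lemmas~\ref{petitsr}, \ref{diffquo} and \ref{restdiff}, while still being large enough that the Gronwall output (of size $\sim cM_{\rm c}+N^{-2}$ times a fixed exponential) stays below $\Theta/2$; this forces the coupled smallness conditions $c$ small and $N_0$ large with explicit dependence on $T$ and $\kappa$ through $\Phi(T)$. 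The energy estimates themselves are essentially a transcription of the proof of Theorem~\ref{timeindepof} and require no new ideas beyond carrying the $\boldsymbol R_N$ source term.
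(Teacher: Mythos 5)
Your proposal is correct and follows essentially the same route as the paper: a bootstrap on $[0,T]$ combining the consistency estimate of Lemma \ref{ResteN} (giving the $\mathcal{O}(N^{-2})$ source $\boldsymbol R_N$) with the linearized remainder estimates of Lemma \ref{restdiff} fed into Lemma \ref{energy} and Lemma \ref{esttransdiff,trans}, closed by choosing $c$ small and $N_0$ large relative to $\exp(C\Phi(T))$. The only detail you gloss over is that the paper integrates the differential inequalities in time and applies Gronwall to the squared norm, arriving at an ODE of the form $Z'=aZ+bZ^{1/2}$ whose explicit solution is exactly \eqref{controldiffsol}; this is the bookkeeping you already flag and does not change the argument.
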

\begin{proof}
 From Theorem \ref{timeindepof} (or Proposition \ref{WPML}) we set $T_N>0$ the maximal time of existence and uniqueness of the solution $(H, U)\in C([0,T_N); H^{s}(\R)^{2N})$ with $H\in L^2(0,T_N; H^{s+1,2}(\R))$ to the system \eqref{multicouches} with $(H,U)_{t=0}=(H_0,U_0)$. We set $T_*\in[0,T_N]$ the larger value such that for all $t\in[0,T_*)$ we have \eqref{noncavitationd}
 and
 \[\Norm{(H,U)}_s(t)\leq c_0M_{{\rm c}},\]
where $c_0>0$ will be determined below. We can choose $c>0$ sufficiently small such that, by continuity of the energy functional we have that $T_*>0$, and we consider below $t\in (0, T_*)$ such that $t\leq T$.

Notice that the control of the difference $(H-P_Nh,U-P_Nu)(t)$ induces a control of $(H,U)(t)$ by triangular inequality: using Lemma \ref{op P_N}, there exists $\alpha>0$ a universal constant such that
\begin{align*}
\Norm{(H,U)}_s(t)&\leq \Norm{(H-P_Nh,U-P_Nu)}_s(t)+\Norm{(P_Nh,P_Nu)}_s(t)\\
&\leq \Norm{(H-P_Nh,U-P_Nu)}_s(t)+\alpha (1+\sqrt{\kappa t})M_{{\rm c}} .
\end{align*}
In the following, we prove the estimate \eqref{controldiffsol} for $t\in (0, T_*)$, that is assuming \eqref{noncavitationd}
 and $\Norm{(H,U)}_s(t)\leq c_0M_{{\rm c}}$. From \eqref{controldiffsol} we infer that under the assumptions
\begin{align}
   & \Norm{(H-P_Nh,U-P_Nu)}_s(0)\leq \epsilon\frac{ M_{{\rm c}}}{2C}\exp\left(-\frac{C}{2}(1+\kappa^{-1}(\n\Dp\underline{U}\n_{l^2}^2+(c_0M_{{\rm c}})^2)T\right),\label{ivdependency}\\
   &\frac{1}{N^2}\frac{\exp\left(\frac{C}{2}(1+\kappa^{-1}(\n\Dp\underline{U}\n_{l^2}^2+(c_0M_{{\rm c}})^2)T\right)-1}{(1+\kappa^{-1}(\n\Dp\underline{U}\n_{l^2}^2+(c_0M_{{\rm c}})^2)}\leq \epsilon\frac{M_{{\rm c}}}{2},\label{Ndependency}
\end{align}
 where $\epsilon>0$ is arbitrary, we have 
\begin{align*}
\Norm{(H,U)}_s(t)\leq (\epsilon+\alpha(1+\sqrt{\kappa t})) M_{{\rm c}}.
\end{align*}
Similarly, choosing $\epsilon$ sufficiently small, we can infer \eqref{noncavitationd} with $h_*/2$ (respectively $2h^*$) replaced with $3h_*/4$ (respectively $3h^*/2$) from \eqref{controldiffsol}-\eqref{ivdependency}-\eqref{Ndependency}.
Hence setting $c_0=2(\epsilon+\alpha(1+\sqrt T))$, the usual continuity argument implies that $T_N\geq T_\star \geq T$, and the conclusions of the Theorem hold.
\medskip

Let us now establish \eqref{controldiffsol}, assuming \eqref{noncavitationd}
 and $\Norm{(H,U)}_s(t)\leq c_0M_{{\rm c}}$. To this aim we follow the proof of Theorem \ref{timeindepof}, using the stability estimates analogous to Lemma \ref{esttransdiff,trans} and Lemma \ref{energy} together with the estimates on remainders of Lemma \ref{ResteN} and Lemma \ref{restdiff}. We use below the results and notations of Lemma \ref{restdiff}.

Following the proof of Lemma \ref{energy} and integrating in time the differential inequality instead of using Gronwall's lemma (and using the fact that $\br{\sf S}\br_{l^2\to l^2}\leq \frac{1}{\sqrt{N}}\br{\sf T}{\sf S}\br_{l^2\to l^2}+\br{\sf C}{\sf S}\br_{l^2\to l^2}$) we obtain
\begin{align}\label{1diff}
&\n {\sf S}(H-P_Nh)(t,\cdot)\n^2_{H^{s,0}}+ \n {\sf T}{\sf S}(H-P_Nh)(t,\cdot)\n^2_{H^{s,0}}+ \n (U-P_Nu)(t,\cdot)\n^2_{H^{s,0}}\nonumber\\&\quad+\kappa\n\partial_x{\sf S}(H-P_Nh)\n^2_{L^2(0,t;H^{s,0})}+\kappa\n\partial_x{\sf T}{\sf S}(H-P_Nh)\n^2_{L^2(0,t;H^{s,0})}\nonumber\\
&\quad\leq c\left(\n {\sf S}(H-P_Nh)(0,\cdot)\n^2_{H^{s,0}}+ \n (U-P_Nu)(0,\cdot)\n^2_{H^{s,0}}+ \n {\sf T}{\sf S}(H-P_Nh)(0,\cdot)\n^2_{H^{s,0}}\right)\nonumber\\
&\quad\quad+c\int_0^tC(\rho_{\surf},h_*,c_0M_{{\rm c}})\big(1+\kappa^{-1}\n \Dp (\underline{U}+U)(\tau,\cdot)\n_{L^\infty_x(l^2)}^2\big)\bigg(\Norm{(H-P_Nh,U-P_Nu)}_s(\tau)\bigg)^2d\tau\nonumber\\
&\quad\quad+c\int_0^t\bigg(\underset{0\leq\alpha\leq s}{\sum}(\n {\sf S} R_{\alpha,0} '(\tau,\cdot)\n_{l^2(L^2_x)}+\n {\sf T}{\sf S}R_{\alpha,0} '(\tau,\cdot)\n_{l^2(L^2_x)}+\n\mathbf{R}_{\alpha,0} '(\tau,\cdot)\n_{l^2(L^2_x)})\bigg)\nonumber\\
&\hspace{8.5cm}\times\bigg(\Norm{(H-P_Nh,U-P_Nu)}_s(\tau)\bigg)d\tau,
\end{align}
where $c>1$ depends on $h_*,\,h^*,\,\rho_{\surf}$ and $\rho_{\bott}$.

Similarly, integrating in time the differential inequalities that yield Lemma \ref{esttransdiff,trans} and recalling that, by definition,
\begin{align*}
\n (H-P_Nh)(t,\cdot)\n_{H^{s-1,1}}+ \n (U-P_Nu)(t,\cdot)\n_{H^{s,2}}+\kappa^\frac{1}{2} \n (H-P_Nh)(t,\cdot)\n_{H^{s,2}} \qquad & \\
\leq \Norm{(H-P_Nh,U-P_Nu)}_s(t),&
\end{align*}
we find that for $j\in\{1,2\}$, 
\begin{align}\label{2diff}
&\n\Dp^j{\sf S}(H-P_Nh)(t,\cdot)\n^2_{H^{s-j,0}}+\kappa \n\partial_x \Dp^j{\sf S}(H-P_Nh)\n^2_{L^2(0,t;H^{s-j,0})}\nonumber\\
&\quad\lesssim  \n \Dp^j{\sf S}(H-P_Nh)(0)\n^2_{H^{s-j,0}}+\int_0^t\n U(\tau,\cdot)\n_{H^{s,2}}\bigg(\Norm{(H-P_Nh,U-P_Nu)}_s(\tau)\bigg)^2d\tau\nonumber\\
&\quad\quad+\int_0^t\bigg(\sum\limits_{0\leq\alpha\leq s-j}\n R_{\alpha,j}'(\tau,\cdot)\n_{l^2(L^2_x)}\bigg)(\tau)\Norm{(H-P_Nh,U-P_Nu)}_s(\tau)d\tau,
\end{align}
for $j\in\{1,2\}$,  
\begin{align}\label{3diff}
&\n\Dp^j(U-P_Nu)(t,\cdot)\n^2_{H^{s-j,0}} \lesssim  \n \Dp^j(U-P_Nu)(0)\n^2_{H^{s-j,0}} \nonumber\\
&\quad
+\int_0^t\bigg\|\partial_x\bigg(U-\kappa\frac{\partial_xH}{\underline{H}+H}\bigg)(\tau,\cdot)\bigg\|_{l^\infty(L^\infty_x)}\bigg(\Norm{(H-P_Nh,U-P_Nu)}_s(\tau)\bigg)^2d\tau\nonumber\\
&\quad+\int_0^t\left(\underset{0\leq\alpha \leq s-j}{\sum}\n \mathbf{R}'_{\alpha,j}(\tau,\cdot)\n_{l^2(L^2_x)}\right)\bigg(\Norm{(H-P_Nh,U-P_Nu)}_s(\tau)\bigg)d\tau,
\end{align}
for $j\in\{0,1\}$,
\begin{align}\label{4diff}
\n&\Dp^j(H-P_Nh)(t,\cdot)\n^2_{H^{s-j-1,0}}+\kappa \n\partial_x \Dp^j(H-P_Nh)\n^2_{L^2(0,t;H^{s-j-1,0})}\nonumber\\
&\lesssim  \n \Dp^j(H-P_Nh)(0)\n^2_{H^{s-j-1,0}}+\int_0^t\n U(\tau,\cdot)\n_{H^{s,2}}\bigg(\Norm{(H-P_Nh,U-P_Nu)}_s(\tau)\bigg)^2d\tau\nonumber\\
&+\int_0^t\left(\underset{0\leq\alpha \leq s-j-1}{\sum}\n \mathcal{R}_{\alpha,j}'(\tau,\cdot)\n_{l^2(L^2_x)}\right)\bigg(\Norm{(H-P_Nh,U-P_Nu)}_s(\tau)\bigg)d\tau,
\end{align}
for $j\in\{0,1,2\}$, 
\begin{align}\label{5diff}
\kappa\n&\Dp^j(H-P_Nh)(t,\cdot)\n^2_{H^{s-j,0}}+\frac{\kappa^2}{2} \n\partial_x \Dp^j(H-P_Nh)\n^2_{L^2(0,t;H^{s-j,0})}\nonumber\\
&\lesssim \kappa \n \Dp^j(H-P_Nh)(0)\n^2_{H^{s-j,0}}+\int_0^t\n U(\tau,\cdot)\n_{H^{s,2}}\bigg(\Norm{(H-P_Nh,U-P_Nu)}_s(\tau)\bigg)^2d\tau\nonumber\\
&\quad +\kappa^\frac{1}{2}\int_0^t\Bigg(\sum\limits_{0\leq\alpha\leq s-j}\n r_{\alpha,j}'(\tau,\cdot)\n_{l^2(L^2_x)}\Bigg)\Norm{(H-P_Nh,U-P_Nu)}_s(\tau)d\tau\nonumber\\
&\quad +\int_0^t\Bigg(\sum\limits_{0\leq\alpha\leq s-j}\n \mathbf{r}_{\alpha,j}'(\tau,\cdot)\n_{l^2(L^2_x)}\Bigg)^2d\tau,
\end{align}
where we used Cauchy-Schwarz and Young inequalities for the contribution of $\mathbf{r}_{\alpha,j}'$.

Moreover as seen in the proof of Theorem \ref{timeindepof} we have 
\begin{equation}\label{exp1}
\left\|\partial_x \left(U-\kappa\frac{\partial_xH}{\underline{H}+H}(\tau,\cdot)\right)\right\|_{l^\infty(L^\infty_x)}\leq C (\n U(\tau,\cdot)\n_{H^{s,2}}+\kappa\n \partial_xH(\tau,\cdot)\n_{H^{s,2}}+\kappa\n H(\tau,\cdot)\n_{H^{s,2}}^2),
\end{equation}
with $C=C(h_\star^{-1},c_0M_{{\rm c}})$ and 
\begin{equation}\label{exp2}
\n \Dp (\underline{U}+U)\n_{L^\infty_x(l^2)}^2\lesssim \n\Dp\underline{U}\n_{l^2}^2+\n U\n_{H^{s,2}}^2\leq  \n\Dp\underline{U}\n_{l^2}^2+(c_0M_{{\rm c}})^2.
\end{equation}
Hence gathering estimates \eqref{1diff}-\eqref{5diff} with \eqref{exp1} and \eqref{exp2}, together with the remainder estimates obtained in Lemma \ref{ResteN} and Lemma \ref{restdiff}, and using the control of $(h,u)$ in \eqref{contenerguregu} and that for all $\tau\in[0,t]$, $ \Norm{(H,U)}_s(\tau)\leq c_0 M_{{\rm c}}$, we obtain
\begin{align*}
&\bigg(\Norm{(H-P_Nh,U-P_Nu)}_s(t)\bigg)^2\leq C^2\bigg(\Norm{(H-P_Nh,U-P_Nu)}_s(0)\bigg)^2\\
&\hspace{1cm}+C\int_0^t(1+\kappa^{-1}(\n\Dp\underline{U}\n_{l^2}^2+(c_0M_{{\rm c}})^2)\bigg(\Norm{(H-P_Nh,U-P_Nu)}_s(\tau)\bigg)^2 d\tau\\
&\hspace{1cm}+\frac{C}{N^2}\int_0^t\Norm{(H-P_Nh,U-P_Nu)}_s(\tau)d\tau \\
&\hspace{1cm}+C\sup_{\tau\in[0,t]}\bigg(\Norm{(H-P_Nh,U-P_Nu)}_s(\tau)\bigg)\left(\int_0^t\bigg(\Norm{(H-P_Nh,U-P_Nu)}_s(\tau)\bigg)^2d\tau\right)^{1/2},
\end{align*}
 where $C:=C(\rho_{\bott},\rho_{\surf}^{-1},h^*,h_*^{-1},\underline{M},c_0M_{{\rm c}})>1$ and the last term stems from the use of Cauchy-Schwarz inequality in contributions involving either $\n \partial_x(H-P_N h) \n_{H^{s,2}}$ or $\n \partial_x H \n_{H^{s,2}}$. Using Young inequality and augmenting $C$, this contribution can be absorbed in terms of the other ones, and we have simply
 \begin{align*}
&\sup_{\tau\in[0,t]}\bigg(\Norm{(H-P_Nh,U-P_Nu)}_s(t)\bigg)^2\leq C^2\bigg(\Norm{(H-P_Nh,U-P_Nu)}_s(0)\bigg)^2\\
&\hspace{1cm}+C\int_0^t(1+\kappa^{-1}(\n\Dp\underline{U}\n_{l^2}^2+(c_0M_{{\rm c}})^2)\sup_{\tau'\in[0,\tau]}\bigg(\Norm{(H-P_Nh,U-P_Nu)}_s(\tau')\bigg)^2 d\tau\\
&\hspace{1cm}+\frac{C}{N^2}\int_0^t\sup_{\tau\in[0,\tau']}\Norm{(H-P_Nh,U-P_Nu)}_s(\tau')d\tau .
\end{align*}

 By Gronwall's lemma we infer that $\sup_{\tau\in[0,t]}\Norm{(H-P_Nh,U-P_Nu)}_s(t)^2\leq Z(t)$ for all $t\in[0,T_*)$, where $Z$ is the solution to
\begin{equation*}\label{Zsystem}\left\{\begin{array}{l}
Z'(\tau)=C(1+\kappa^{-1}(\n\Dp\underline{U}\n_{l^2}^2+(c_0M_{{\rm c}})^2)Z(\tau)+\frac{C}{N^2}Z^\frac{1}{2}(\tau)\quad \tau\in[0,T_*),\\
Z(0)=C^2\Norm{(H-P_Nh,U-P_Nu)}_s(0)^2,\end{array}
\right.
\end{equation*}
whose explicit solution is
\begin{align*} &Z(t)=\Bigg(C\Norm{(H-P_Nh,U-P_Nu)}_s(0)\exp\left(\frac{C}{2}(1+\kappa^{-1}(\n\Dp\underline{U}\n_{l^2}^2+(C M_{{\rm c}})^2)t\right)\nonumber\\&\hspace{6cm}+\frac{C}{2N^2}\int\limits_0^t\exp\bigg(\frac{C}{2}(1+\kappa^{-1}(\n\Dp\underline{U}\n_{l^2}^2+(C M_{{\rm c}})^2)(t-\tau)\bigg)d\tau\Bigg)^2.
\end{align*}
Consequently we obtain \eqref{controldiffsol}, and the proof is complete.
\end{proof}
\begin{remark}
\begin{itemize}
    \item The existence of $T>0$  and $(h,u)$ sufficiently smooth  solutions to the system \eqref{contsystem}  over the time interval $[0,T]$ satisfying  \eqref{noncavitationcont} and \eqref{contenerguregu}, as considered in the previous theorem, results from Theorem 1.1 in \cite{DB}.
    \item The dependency of $c$ and $N_0$ on $T$ and $\kappa$ is expressed  in \eqref{ivdependency} and \eqref{Ndependency}. 
\end{itemize}
\end{remark}
\begin{remark}
We used the operator 
\[
P_N:	\begin{array}{ccc}
	\mathcal{C}([\rho_{\surf},\rho_{\bott}])& \rightarrow & \mathbb{R}^N\\
	f&\mapsto &\left(f(\boldsymbol{\rho}_i\right))_{1\leq i\leq N}
\end{array}
\]
to map functions defined on $[\rho_{\surf},\rho_{\bott}]$ to $N$-dimensional vectors because it enjoys the property $P_N(fg)=P_N(f)P_N(g)$. However we can adapt the proof of Thorem \ref{mainth} so as to replace $P_N$ with
\[
\overline{P_N}:	\begin{array}{ccc}
	\mathcal{C}([\rho_{\surf},\rho_{\bott}])& \rightarrow & \mathbb{R}^N\\
	f&\mapsto &\left(\frac1{\boldsymbol{\rho}_{i+1/2}-\boldsymbol{\rho}_{i-1/2}}\int_{\boldsymbol{\rho}_{i-1/2}}^{\boldsymbol{\rho}_{i+1/2}}f(\varrho)d\varrho\right)_{1\leq i\leq N}
\end{array}
\]
In this case the physical meaning of the discretized objects is more explicit. For instance, $\big(\overline{P_N}(\underline u+u)\big)_i$ is the $i-$th layer-averaged horizontal velocity, whereas $\big(\overline{P_N}(\underline h+h))_i=N(\eta_{i-1}-\eta_i)$ is the rescaled layer depth; see Figure \ref{dessin}.
\end{remark}
\paragraph{Acknowledgments} The author thanks the Centre Henri Lebesgue ANR-11-LABX-0020-01 for its stimulating mathematical research programs.

\bibliographystyle{abbrv}
\bibliography{refs}

\end{document}